\newcommand{\Rinfty}{{R_\infty}}
\newcommand{\Rbarinfty}{{\bar{R}_\infty}}
\def\A{\mathbb A}
\def\C{\mathbb C}
\def\F{\mathbb F}
\def\Q{\mathbb{Q}}
\def\R{\mathbb{R}}
\def\T{\mathbb{T}}
\def\Z{\mathbb{Z}}
\def\Fbar{\overline{\F}}
\def\m{\mathfrak m}
\newcommand{\cycle}{Z}
\def\chibar{\overline{\chi}}
\def\pr{\mathrm{pr}}
\def\ab{\mathrm{ab}}
\def\GL{\mathrm{GL}}
\def\Gal{\mathrm{Gal}}
\def\Sym{\mathrm{Sym}}
\def\Ext{\mathrm{Ext}}
\def\End{\mathrm{End}}
\def\Art{\mathop{\mathrm{Art}}\nolimits}
\def\Hom{\mathop{\mathrm{Hom}}\nolimits}
\def\Spec{\mathop{\mathrm{Spec}}\nolimits}
\def\Frob{\mathop{\mathrm{Frob}}\nolimits}
\def\Supp{\mathop{\mathrm{Supp}}\nolimits}
\def\Ind{\mathop{\mathrm{Ind}}\nolimits}
\def\rhobar{\overline{\rho}}
\def\cotimes{\widehat{\otimes}}
\def\cboxtimes{\widehat{\boxtimes}}
\def\St{\mathrm{St}}
\def\m{\mathfrak{m}}
\def\iso{\buildrel \sim \over \longrightarrow}
\def\triv{\mathds{1}}
\newcommand{\onto}{\twoheadrightarrow}
\newcommand{\into}{\hookrightarrow}
\newcommand{\To}{\longrightarrow}
\newcommand{\isoto}{\stackrel{\sim}{\To}}
\newlength{\ownl}
\newcommand{\ad}{{\operatorname{ad}\,}}
\newcommand{\diag}{{\operatorname{diag}}}
\newcommand{\rec}{{\operatorname{rec}}}
\newcommand{\Res}{{\operatorname{Res}}}
\newcommand{\wt}[1]{\widetilde{#1}}
\newcommand{\GSp}{\operatorname{GSp}}
\newcommand{\PSL}{\operatorname{PSL}}
\newcommand{\cris}{{\operatorname{cr}}}
\newcommand{\loc}{{\operatorname{loc}}}
\newcommand{\semis}{{\operatorname{ss}}}
\newcommand{\univ}{{\operatorname{univ}}}
\newcommand{\CM}{{\mathcal{M}}}
\newcommand{\cC}{\mathcal{C}}
\newcommand{\cG}{\mathcal{G}}
\newcommand{\cI}{\mathcal{I}}
\newcommand{\caL}{\mathcal{L}}
\newcommand{\cM}{\mathcal{M}}
\newcommand{\cO}{\mathcal{O}}
\newcommand{\cS}{\mathcal{S}}
\newcommand{\cX}{\mathcal{X}}
\newcommand{\cY}{\mathcal{Y}}
\newcommand{\cZ}{\mathcal{Z}}
\newcommand{\tQ}{\widetilde{{Q}}}
\newcommand{\tS}{\widetilde{{S}}}
\newcommand{\tT}{\widetilde{{T}}}
\newcommand{\tv}{{\widetilde{{v}}}}
\newcommand{\varepsilonbar  }{\overline{\varepsilon}}
 \newcommand{\phibar    }{\overline{\phi}}   
\newcommand{\psibar   }{\overline{\psi}}
 \newcommand{\taut     }{\widetilde{\tau}}
 \newcommand{\omegat   }{\widetilde{\omega}}
\def\RCS$#1: #2 ${\expandafter\def\csname RCS#1\endcsname{#2}}
 \newcommand{\bigO}{\mathcal{O}}
 \newcommand{\p}{\mathfrak{p}} 
\newcommand{\fp}{\mathfrak{p}}
\newcommand{\fq}{\mathfrak{q}}
\newcommand{\Favoid}{F^{(\mathrm{avoid})}} 
\newcommand{\bb}{\mathbb} 
\newcommand{\mc}{\mathcal}
\newcommand{\mf}{\mathfrak}
\DeclareMathOperator{\ssg}{ss}
\newcommand{\rbar}{{\bar{r}}}
\newcommand{\sbar}{{\bar{s}}}
\newcommand{\Rbar}{\bar{R}}
\newcommand{\HT}{\operatorname{HT}}
 \newcommand{\Qp}{{\Q_p}}
\newcommand{\GQp}{{G_{\Q_p}}}
\newcommand{\WQp}{{W_{\Q_p}}}
\newcommand{\IQp}{{I_{\Q_p}}}
\newcommand{\Zp}{{\Z_p}}
\newcommand{\Qpbar}{{\overline{\Q}_p}}
\newcommand{\Qpbartimes}{{\overline{\Q}^\times_p}}
\newcommand{\Fpbar}{{\overline{\F}_p}}
\newcommand{\Fp}{{\F_p}}
\newtheorem{thm}[subsubsection]{Theorem}
\newtheorem{lemma}[subsubsection]{Lemma}
\newtheorem{lem}[subsubsection]{Lemma}
\newtheorem{df}[subsubsection]{Definition}
\newtheorem{defn}[subsubsection]{Definition}
\newtheorem{conj}[subsubsection]{Conjecture}
\newtheorem{prop}[subsubsection]{Proposition}
\newtheorem{remark}[subsubsection]{Remark}
\newtheorem{terminology}[subsubsection]{Terminology}
\newtheorem{alemma}[subsection]{Lemma}
\newtheorem{aconj}[subsection]{Conjecture}
\newtheorem{acor}[subsection]{Corollary}
\newtheorem{aprop}[subsection]{Proposition}
\newtheorem{aremark}[subsection]{Remark}
\def\numequation{\addtocounter{subsubsection}{1}\begin{equation}}
\def\nummultline{\addtocounter{subsubsection}{1}\begin{multline}}
\def\anumequation{\addtocounter{subsection}{1}\begin{equation}}
\newcommand{\ssinc}{\addtocounter{subsubsection}{1}}
\title[The Breuil--M\'ezard Conjecture]
{A geometric perspective on the Breuil--M\'ezard Conjecture}
\author{Matthew Emerton and Toby Gee}
\thanks{The first author was supported in part by NSF grant
  DMS-1003339, and the second author by NSF grant DMS-0841491}
\address{Mathematics Department, Northwestern
University, 2033 Sheridan Rd., Evanston, IL 60208}
\email[Matthew Emerton]{emerton@math.northwestern.edu}
\email[Toby Gee]{gee@math.northwestern.edu}
\begin{document}
\begin{abstract}Let $p>2$ be prime. We state and prove (under mild
  hypotheses on the residual representation) a geometric refinement of
  the Breuil--M\'ezard conjecture for 2-dimensional mod $p$
  representations of the absolute Galois group of $\Qp$. We also state
  a conjectural generalisation to $n$-dimensional representations of
  the absolute Galois group of an arbitrary finite extension of $\Qp$,
  and give a conditional proof of this conjecture, subject to a certain
  $R = \T$-type theorem together with a strong
  version of the weight part of Serre's conjecture for rank~$n$
  unitary groups. We deduce an unconditional result in the case of
  two-dimensional potentially Barsotti--Tate representations.
\end{abstract}
\maketitle
\section{Introduction}\label{sec:introduction}

Our aim in this paper is to revisit the Breuil--M\'ezard conjecture 
\cite{breuil-mezard} from a geometric point of view. 
Let us explain what we mean by this.
First recall that the Breuil--M\'ezard conjecture posits a
formula (in terms of certain representation-theoretic data) for 
the Hilbert--Samuel multiplicity of the characteristic $p$ fibre of 
certain local $\Z_p$-algebras, namely those whose characteristic zero
fibres parameterize two-dimensional potentially semistable 
liftings of some fixed continuous two-dimensional Galois representation
$\rbar: G_{\Q_p} \to \GL_2(\F)$, where $\F$ is a finite field
of characteristic $p$  (the so-called {\em potentially
semistable deformation rings} constructed in \cite{kisindefrings}).
One way in which a local ring can have multiplicity is if its $\Spec$
has more than one component:  if its $\Spec$ is
the union of $n$ irreducible components, each with multiplicities $\mu_i$
($i = 1,\ldots,n$), then the multiplicity of the entire ring
will be $\sum_i \mu_i$.  Our goal is to
both explain and refine the Breuil--M\'ezard
conjecture in these terms,
by identifying the irreducible components of the various
rings involved, in representation-theoretic terms, as well as to
determine their multiplicities.

To be somewhat more precise, after recalling some background material in 
Section~\ref{sec:cycles}, in Section~\ref{sec:GL2 Qp}
we consider the case of two-dimensional representations
of $G_{\Q_p}$, as introduced above. 
In this case the Breuil--M\'ezard
conjecture is a theorem of Kisin \cite{kisinfmc} (under very mild assumptions
on $\rbar$), and we are able to strengthen Kisin's result so as to prove
our geometric refinement of the conjecture. 
(We give a more detailed description
of our results in this case in Subsection~\ref{subsec:GL2 summary}
below.) The possibility of such an extension is strongly suggested by
the recent paper \cite{breuilmezardIII}, and our results may be viewed
as a sharpening of the results of \emph{ibid}.\ (see Remark
\ref{rem: comparison to BMII} below). In Section~\ref{sec: BM conjecture
for GLn} we propose an extension of the Breuil--M\'ezard conjecture,
and of our geometric refinement thereof, to the
case of $n$-dimensional representations of $G_K$, for any finite extension
$K$ of $\Q_p$ and any positive integer~$n$.   Finally,
in Section~\ref{sec: patching for GLn}, we explain how the arguments
of Section~\ref{sec:GL2 Qp} may be extended to the case of $n$-dimensional representations,
so as to prove an equivalence between the Breuil--M\'ezard conjecture (extended to
the $n$-dimensional case) and its geometric refinement, under the
assumption of a suitable $R =\T$-type theorem, together with a
strong form of the weight part of Serre's conjecture
for rank $n$ unitary groups. In the case of two-dimensional
potentially Barsotti--Tate representations, we deduce an unconditional
geometric refinement of the results of~\cite{geekisin}.

In an appendix we establish a technical result that allows us to realize
representations of local Galois groups as restrictions of automorphic representations
of global Galois groups. \subsection{Summary of our results in the case of two-dimensional representations
of $G_{\Q_p}$.}
\label{subsec:GL2 summary}
We now explain in more detail our geometric
refinement of the original Breuil--M\'ezard conjecture.
To this end,
we fix a finite extension $E$ of $\Q_p$,
with ring of integers $\mathcal O$, residue field $\F$, and uniformizer $\pi$.
As above, we also fix a continuous representation $\rbar: G_{\Q_p} \to \GL_2(\F)$,
and we let $R^{\square}(\rbar)$ denote
the universal lifting ring of $\rbar$ over~$\mathcal O.$
If $m$, $n$ are integers with $n\ge 0$ and $\tau$ is an inertial type
defined over $E$, then we may consider the subset of $\Spec
R^{\square}(\rbar)[1/p]$ consisting of those closed points that
correspond to lifts of $\rbar$ to characteristic zero which are
potentially semistable with Hodge--Tate weights $(m,m+n+1)$ and
inertial type $\tau$. (We adopt the convention that the cyclotomic
character has Hodge--Tate weight $1$, though we caution the reader that
this convention does not remain in force for the entire paper; see
Section \ref{subsec:notation} for the precise conventions we will
follow.) In \cite{kisindefrings},
Kisin proves that there is a reduced closed subscheme $\Spec
R^{\square}(m,n,\tau,\rbar)$ of $\Spec R^{\square}(\rbar)$ such that
this subset is precisely the set of closed points of $\Spec
R^{\square}(m,n,\tau,\rbar)[1/p]$.\footnote{In fact, in
  \cite{kisindefrings} Kisin also fixes the determinants of the lifts
  that he considers, but we will suppress this technical point for
  now.} 
The Breuil--M\'ezard conjecture addresses the problem of describing 
the characteristic $p$ fibre of 
$\Spec R^{\square}(m,n,\tau,\rbar)$, i.e.\ the closed subscheme
$\Spec R^{\square}(m,n,\tau,\rbar)/\pi$
of $\Spec R^{\square}(\rbar)/\pi$.
More precisely, the conjecture as originally stated in \cite{breuil-mezard} gives
a conjectural formula for the Hilbert--Samuel multiplicity of this local scheme.
This conjecture was proved (under very mild assumptions on $\rbar$) in \cite{kisinfmc}.
In this paper we will prove a more precise statement, namely we will identify
the underlying cycle of $\Spec R^{\square}(m,n,\tau,\rbar)/\pi$; that is,
we will describe the irreducible components of this scheme, and the 
multiplicity with which each component appears.  
To explain this more carefully,
suppose first that $\cX$ is any Noetherian scheme.
If $\cZ$ is a closed subscheme of $\cX$,
and $\mathfrak p$ is any point of $\cX$, then we may define the (Hilbert--Samuel) multiplicity $e(\cZ,\mathfrak p)$ of $\cZ$ at $\mathfrak p$
to be the Hilbert--Samuel multiplicity of the stalk $\cO_{\cZ,\mathfrak p}$.
Suppose now that
$\cZ$ is equidimensional of dimension~$d$.  If $\mathfrak a$ is
a point of $\cX$ of dimension~$d$ (i.e.\ whose closure $\overline{\{\mathfrak a\}}$
is of dimension~$d$),
then the stalk $\mathcal O_{\cZ,\mathfrak a}$ is either zero (if $\mathfrak a \not\in\cZ$)
or an Artinian local ring (if $\mathfrak a \in \cZ,$ i.e.\ if $\mathfrak a$ is
a generic point of $\cZ$, or, equivalently, if $\overline{\{\mathfrak a\}}$ is an 
irreducible component of $\cZ$), and the multiplicity $e(\cZ,\mathfrak{a})$ is simply 
the length of $\mathcal O_{\cZ,\mathfrak a}$ as a module over itself,
a quantity which can be interpreted geometrically as the multiplicity with which the
component $\overline{\{\mathfrak a\}}$ appears in $\cZ$.
Since $\cZ$ contains only finitely many generic points, 
the formal sum $\cycle(\cZ):= \sum_{\mathfrak a} e(\cZ,\mathfrak{a}) \mathfrak a$
is well-defined as a $d$-dimensional cycle on $\cX$, and we refer to it as 
the cycle associated to $\cZ$. 
If $\mathfrak p$ is any point of $\cX$, then one has the formula
\ssinc
\begin{equation}
\label{eqn:multiplicities from cycles}
e(\cZ,\mathfrak p) = \sum_{\mathfrak a} e(\cZ,\mathfrak{a})
e\bigl(\bigl(\overline{\{\mathfrak a\}}\bigr),\mathfrak{p}\bigr)
\end{equation}
(where again the sum ranges over points $\mathfrak a$ of dimension~$d$),
allowing us to compute the multiplicity of $\cZ$ at any point in terms of 
its associated cycle.
We are interested in the case when
$\cX := \Spec R^{\square}(\rbar)/\pi$ (an $8$-dimensional Noetherian
local scheme) and $\cZ :=
\Spec R^{\square}(m,n,\tau,\rbar)/\pi$ for some $m$, $n$, $\tau$. It is a theorem of \cite{kisindefrings}
that each of these closed subschemes $\cZ$ is equidimensional of dimension~$5$,
and so we may define the associated cycles $\cycle\bigl(\Spec R^{\square}(m,n,\tau,\rbar)/\pi\bigr)$.
Using this construction, we may in particular define a certain cycle on $\Spec R^{\square}(\rbar)/\pi$
attached to each Serre weight of $\rbar$.
\begin{df}
{\em
If $\sigma$ is a Serre weight of $\rbar$, write
$\sigma=\sigma_{m,n} :=\det^m \otimes\Sym^{n}\F^2$
for integers $m$, $n$ such that $0 \leq n \leq p-1$,
and define
$\cC_{m,n} := \cycle\bigl(\Spec R^{\square}(m,n,\triv,\rbar)/\pi\bigr).$
(To avoid ambiguity, one could insist that $m$ is chosen so
that $0 \leq m \leq p~-~2$.  However,
the subscheme $\Spec R^{\square}(m,n,\triv,\rbar)/\pi$ is in fact
independent of the particular choice of $m$ used to describe $\sigma$.)
}
\end{df}
The following proposition describes these cycles quite explicitly. We
will say that $\cC_{m,n}$ consists of a single component if it has a
single irreducible component, which is also reduced, and that it
consists of two components if it consists of two reduced and
irreducible components.\begin{prop}Assume that the Breuil--M\'ezard Conjecture {\em (}i.e.\
  Conjecture {\em \ref{conj: original BM conjecture for GL2 Qp}} below{\em )} holds for $\rbar$.  
\label{prop:components}
\begin{enumerate}
\item If $\rbar$ is irreducible and $\sigma_{m,n}$ is a Serre weight
  of $\rbar$, then $\cC_{m,n}$ consists of a single component, which
  has multiplicity one at the closed point of $\Spec
  R^{\square}(\rbar)/\pi$.
\item If $\rbar$ is reducible and $\sigma_{m,n}$ is a Serre weight of $\rbar$ such that $n <
  p-2$, or $n=p-2$ and $\rbar$ is a non-split extension of distinct
  characters, then $\cC_{m,n}$ consists of a single component, which has
  multiplicity one at the closed point of $\Spec
  R^{\square}(\rbar)/\pi$.  \item If $\rbar$ is reducible and $\sigma_{m,n}$ is a Serre weight of $\rbar$ with $n = p-1$,
  so that $\rbar|_{I_p}\sim
  \begin{pmatrix}
    \omega^{m+1}&*\\0&\omega^m
  \end{pmatrix}$, then if $*$ is peu ramifi\'ee and $\rbar$ itself is
  a twist of an extension of the trivial character by the mod $p$
  cyclotomic character, then $\cC_{m,n}$ is a
  sum of two components, each having multiplicity one at the closed 
point of $\Spec R^{\square}(\rbar)/\pi$. Otherwise $\cC_{m,n}$ is a single component, having multiplicity one at
  the closed point of $\Spec R^{\square}(\rbar)/\pi$.
\item If $\sigma_{m,n}$ is a Serre weight of $\rbar$ with $n = p-2$
  and $\rbar$ is split and $p$-distinguished, then $\cC_{m,n}$ is a
  sum of two components, each having multiplicity one at the closed
  point of $\Spec R^{\square}(\rbar)/\pi$.
\item\label{item: the p-2 case} If $\sigma_{m,n}$ is a Serre weight of
  $\rbar$ with $n = p-2$ and $\rbar$ has scalar semisimplifiction, then $\cC_{m,n}$
  consists of a single component.
\item If $\sigma_{m,n}$ and $\sigma_{m',n'}$ are distinct Serre
  weights of $\rbar$, then $\cC_{m,n}$ and $\cC_{m',n'}$ have disjoint
  support, except if $m\equiv m'\pmod{p-1}$, $n=0$ and $n'=p-1$ {\em (}possibly after
  interchanging $\sigma_{m,n}$ and $\sigma_{m',n'}${\em )}, in which case
  $\cC_{m,n}$ is equal to a component of $\cC_{m',n'}$.
\end{enumerate}
\end{prop}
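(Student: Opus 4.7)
The BM assumption, applied with trivial inertial type $\tau = \triv$, yields
\[
e\bigl(\Spec R^{\square}(m,n,\triv,\rbar)/\pi, \, \gm\bigr) \;=\; \mu_{\sigma_{m,n}}(\rbar),
\]
because for $0 \leq n \leq p-1$ the $\GL_2(\Z_p)$-lattice $\det^m\otimes\Sym^n\O^2$ has irreducible mod-$p$ reduction $\sigma_{m,n}$, so $\sigma_{m,n}$ is its unique Jordan--H\"older constituent. The values $\mu_\sigma(\rbar)$, known from the Serre-weight analysis of Berger--Breuil, are $1$ in cases (1), (2), and the non-exceptional part of (3), and equal $2$ in the exceptional part of (3) and in (4) (and are known explicitly in (5)). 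This pins down $e(\cC_{m,n}, \gm)$, and the remainder of the proof identifies the irreducible components carrying $\cC_{m,n}$ and determines their generic coefficients.

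\textbf{Components of the deformation ring.} By Kisin's construction, $R^{\square}(m,n,\triv,\rbar)$ is $\O$-flat, reduced, and equidimensional of relative dimension $4$, so its special fibre is reduced and equidimensional of dimension $5$. The irreducible components obey an ordinary/non-ordinary dichotomy: the ordinary locus parametrising lifts admitting an invariant line of the prescribed Hodge--Tate shape is a closed subscheme, itself irreducible when non-empty; its complement in the ring contributes at most one further (non-ordinary) component. In case (1), $\rbar$ admits no ordinary lift and the ring is irreducible. In cases (2) and (5), the ordinary locus is either empty or lies inside the closure of the non-ordinary locus, giving a single irreducible component. In the exceptional subcase of (3) and in case (4) one exhibits characteristic-zero ordinary lifts and irreducible lifts of $\rbar$ with the prescribed Hodge--Tate weights lying only on one of the two loci, proving that ordinary and non-ordinary pieces are genuinely distinct components. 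Since the ring is generically reduced, each irreducible component enters $\cC_{m,n}$ with coefficient $1$, and the BM value distributes accordingly over the components: $\mu = 1$ forces a single component of multiplicity $1$ at $\gm$, and $\mu = 2$ over two distinct components forces each to have multiplicity $1$ at $\gm$.

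\textbf{The overlap in (6).} For two distinct Serre weights, the generic points of the non-ordinary parts of $\cC_{m,n}$ and $\cC_{m',n'}$ parametrise (after inverting $p$) irreducible crystalline representations with Hodge--Tate weights $(m, m+n+1)$ respectively $(m', m'+n'+1)$; these weights uniquely recover $(m,n)$ with $0 \leq n \leq p-1$, so distinct weights give disjoint non-ordinary supports. The ordinary components can, however, coincide: ordinary crystalline representations with HT weights $(m',m'+p)$ reduce to the same $\rbar$ as ordinary crystalline representations with HT weights $(m,m+1)$ whenever $m \equiv m' \pmod{p-1}$, because $\chi_{\mathrm{cyc}}^p \equiv \chi_{\mathrm{cyc}} \pmod{p}$ on inertia (and the unramified quotient data matches). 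For $n=0$ the entire scheme $\Spec R^{\square}(m,0,\triv,\rbar)$ is ordinary (the two HT weights being consecutive), so $\cC_{m,0}$ is identified with the ordinary component of $\cC_{m',p-1}$, producing the claimed exceptional overlap.

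\textbf{Main obstacle.} The principal difficulty is in the boundary cases (3), (4), (5), where one must (a) correctly compute $\mu_{\sigma_{m,n}}(\rbar)$ at $n = p-2, p-1$ via a careful Jordan--H\"older analysis of reductions of tamely ramified types (tracking the distinct contributions of the two Serre weights of a tr\`es/peu ramifi\'ee extension), and (b) decide whether the ordinary and non-ordinary loci of $R^{\square}(m,n,\triv,\rbar)$ form distinct components or coincide. Step (b), in particular the distinction between the split $p$-distinguished case (4) and the scalar-semisimple case (5), and the isolation of the peu-ramifi\'ee cyclotomic-like subcase in (3), is where the heart of the argument lies; it rests on producing characteristic-zero deformations of the correct type while ruling out their degeneration onto the wrong locus.
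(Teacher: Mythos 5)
Your high‑level plan is sound — use the numerical BM at the trivial type, read off $e(\cC_{m,n},\gm)=\mu_{m,n}(\rbar)$ from the fact that $\sigma_{m,n}$ is the unique Jordan–H\"older factor of $L_{\lambda,\triv}/\pi$, then combine the resulting values with knowledge of the components and generic coefficients to solve for the closed-point multiplicities. That is the same skeleton the paper uses. But the execution has several genuine gaps.

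First, the assertion that $R^{\square}(m,n,\triv,\rbar)$ is $\cO$-flat and reduced and therefore its special fibre is reduced is simply wrong as an inference: $\cO[[x]]/(x^2-\pi)$ is a reduced $\cO$-flat domain whose mod-$\pi$ fibre is $\F[[x]]/(x^2)$. What is needed — and what the paper uses — is \emph{generic} reducedness of $R^{\square}(m,n,\triv,\rbar)/\pi$, which is a nontrivial fact from the proof of Prop.\ 2.2.15 of Kisin. This distinction is not cosmetic: if the special fibre were actually reduced, the Breuil–M\'ezard multiplicities would all be $1$ and the conjecture would be vacuous.

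Second, your ordinary/non-ordinary dichotomy is a suggestive picture but is not doing the work of Kisin's Corollary 1.7.14, which is where the paper actually gets the count of irreducible components of the low-weight crystalline deformation rings. For instance, in case (2) you assert that the ordinary locus ``lies inside the closure of the non-ordinary locus'' without argument; in case (4) the two components are not ``ordinary vs.\ non-ordinary'' but two ordinary pieces distinguished by which diagonal character is lifted to the sub; and you explicitly flag in your ``Main obstacle'' paragraph that you have not done the work of distinguishing (3)–(5). Likewise, your argument for disjointness in (6) is not valid as stated: that two crystalline rings have distinct Hodge–Tate weights after inverting $p$ does not tell you anything directly about whether their mod-$\pi$ cycles share a component — the mod-$\pi$ components do not carry Hodge–Tate weights. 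The paper's actual proof of this step, in Proposition \ref{prop:relationship between cycles for GL2 Qp}, is entirely different: when $\rbar$ is irreducible one takes the auxiliary principal-series type $\omegat^{m+n}\oplus\omegat^m$, notes via BM that the corresponding cycle is $\cC_{m,n}+\cC_{m+n,p-1-n}$, and then cites Savitt's explicit computation (\cite[Thm.\ 6.22]{MR2137952}) showing that the deformation ring mod $\pi$ is $\F[[U,V,W,X,Y]]/(XY)$, hence has two distinct components; when $\rbar$ is split decomposable one uses the ordinary lifting rings and the distinctness of the two characters; and the $n=0$ vs.\ $n=p-1$ overlap is handled by identifying $\cC_{m,0}$ and $\cC_{m,p-1}$ with crystalline and semistable deformation cycles and analysing where genuine semistability occurs. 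None of these arguments appears in your sketch, and absent some explicit computation of this kind the disjointness assertions in (6) are not established.

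In short: your reduction to determining the number of components and their generic coefficients is correct, and the intuition about the $n=0$/$n=p-1$ overlap via $\omega^p=\omega$ on inertia is right, but the proof relies on unjustified assertions where the paper uses Kisin's Corollary 1.7.14 for component counts and Proposition \ref{prop:relationship between cycles for GL2 Qp} (with its explicit deformation-theoretic computations) for disjointness. These are real inputs that a complete proof must supply.
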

We remark that by the results of \cite{kisinfmc}, the hypothesis of
the preceeding proposition holds for most $\rbar$.

We make one more definition before stating our main theorem.
\begin{df}
\label{df:locally algebraic types}
{\em
Given integers $a$, $b$ with $b\ge 0$ and an inertial type $\tau$ (assumed to be
defined over $E$), let $\sigma(\tau)$ denote
the representation of $\GL_2(\Z_p)$ over $E$
associated to $\tau$ via Henniart's inertial local
Langlands correspondence, write $\sigma(a,b,\tau) := (\det^a\otimes \Sym^{b} E^2)
\otimes_E \sigma(\tau)$,
and let $\overline{\sigma(a,b,\tau)}^{\ssg}$ denote the semi-simplification
of the reduction mod $\pi$ of (any) $\GL_2(\Z_p)$-invariant $\cO$-lattice in
$\sigma(a,b,\tau)$.  (The representation so obtained is well-defined
independent of the choice of invariant lattice.)
}
\end{df}
We may now state our geometric refinement of the Breuil--M\'ezard conjecture.
\begin{thm}
\label{thm:main theorem}
Suppose that \[\rbar\not\sim
 \begin{pmatrix}
   \omega\chi&*\\0&\chi
 \end{pmatrix}\] for any character $\chi$. Fix integers $m$, $n$ with $n\ge 0$ and an inertial type $\tau$, 
and for each Serre weight $\sigma_{m,n}$ of $\rbar$,
let $a_{m,n}$ denote the multiplicity with which 
$\sigma_{m,n}$ appears as a constituent of $\overline{\sigma(a,b,\tau)}^{\ssg}$.
Then we have the following equality of cycles:
$$\cycle\bigl(\Spec R^{\square}(a,b,\tau,\rbar)/\pi\bigr)
= \sum_{m,n} a_{m,n} \cC_{m,n}.$$
\end{thm}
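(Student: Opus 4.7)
My plan is to follow and refine the Taylor--Wiles patching approach used by Kisin in \cite{kisinfmc} to prove the original Breuil--M\'ezard conjecture. The first step is to globalize $\rbar$ to a continuous representation $\overline{r}\colon G_F \to \GL_2(\F)$ for some CM field $F$, arising from an automorphic form on a definite unitary group; the hypothesis on $\rbar$ is what ensures that such a globalization exists, with the appendix providing the relevant lifting results. One then applies the Taylor--Wiles patching construction to the algebraic automorphic forms on this unitary group, producing a finite module $M_\infty$ over a formal power series ring $R_\infty$ which surjects onto $R^{\square}(\rbar)$ after quotienting by framing variables, so that in particular $R_\infty/\pi$ surjects onto $R^{\square}(\rbar)/\pi$ modulo those variables.

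For each smooth admissible $\cO[\GL_2(\Z_p)]$-representation $V$ on a finite free $\cO$-module, form the patched module $M_\infty(V)$. Three properties are crucial: (a) the action of $R_\infty$ on $M_\infty(\sigma(a,b,\tau)^\circ)$ factors through the quotient corresponding to $R^{\square}(a,b,\tau,\rbar)$, via local-global compatibility together with Henniart's inertial local Langlands correspondence; (b) the functor $V \mapsto M_\infty(V)$ is exact on short exact sequences of $\GL_2(\Z_p)$-stable lattices; (c) $M_\infty(V)$ is maximal Cohen--Macaulay over its support, and has generic rank equal to a fixed positive integer $C$ (independent of $V$) on each irreducible component of every relevant support.

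Combining these, a choice of $\GL_2(\Z_p)$-stable lattice $\sigma(a,b,\tau)^\circ$ in $\sigma(a,b,\tau)$ yields via (b) the identity
\[
\cycle\bigl(M_\infty(\sigma(a,b,\tau)^\circ)/\pi\bigr) = \sum_{m,n} a_{m,n} \, \cycle\bigl(M_\infty(\sigma_{m,n})/\pi\bigr).
\]
Property (c) then lets one translate these cycles of patched modules back into cycles of the underlying deformation rings: the left-hand side equals $C$ times the pullback to $\Spec R_\infty/\pi$ of $\cycle\bigl(\Spec R^{\square}(a,b,\tau,\rbar)/\pi\bigr)$, while each term on the right equals $C \cdot \cC_{m,n}$ pulled back similarly. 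Canceling $C$ and pushing forward along $\Spec R_\infty/\pi \to \Spec R^{\square}(\rbar)/\pi$ then yields the desired equality.

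The principal obstacle is property (c): one needs $M_\infty$ to faithfully record not merely the Hilbert--Samuel multiplicity at the closed point (which is what Kisin's original argument extracts) but the entire cycle-level geometry on each irreducible component of the potentially semistable deformation rings. The key inputs will be the Cohen--Macaulay property of $M_\infty$ (standard in patching) together with local-global compatibility at $p$ to pin down the generic ranks uniformly across components. This is most delicate precisely in the cases where $\cC_{m,n}$ decomposes into two components (parts (3) and (4) of Proposition \ref{prop:components}): one must verify that $M_\infty(\sigma_{m,n})$ sees both components with the same generic rank, rather than concentrating on one. Resolving this case requires a careful analysis of the structure of the patched module over the corresponding local potentially Barsotti--Tate and crystalline deformation rings, using the explicit description of the latter coming from Proposition \ref{prop:components} to ensure the uniform-rank property holds.
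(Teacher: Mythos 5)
Your strategy---globalize, patch, and compare cycles of patched modules with cycles of local deformation rings---is in the right spirit, and does track the structure of the paper's argument.  However, there are two significant divergences, one stylistic and one substantive.

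The stylistic one: for the $\GL_2(\Q_p)$ case the paper does {\em not} globalize to a CM field with a unitary group, but to a totally real field~$F$ in which $p$ splits and works with a definite quaternion algebra over~$F$ (Proposition~\ref{prop:mod p local representation realised globally}), so as to stay exactly within Kisin's framework from~\cite{kisinfmc} and be able to cite his patching construction and, crucially, his $p$-adic Langlands input verbatim.  (The CM-field/unitary-group setup appears only in Section~\ref{sec: patching for GLn} for the $n$-dimensional conjecture.)

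The substantive gap is in your property~(c).  You require that $M_\infty(V)$ have the {\em same} generic rank~$C$ on every irreducible component of the support, for every~$V$, and you yourself flag this as the delicate point in the cases where $\cC_{m,n}$ has two components.  But you give no argument for it, and in fact this uniformity is not obviously accessible from the patching data alone.  The paper avoids having to prove it.  Instead, after fixing a Jordan--H\"older filtration $0=L_0\subset\dots\subset L_s = W_\sigma/\pi W_\sigma$ and patching to get $0 = M^0_\infty\subset \dots \subset M^s_\infty = M_\infty/\pi M_\infty$, it establishes only an {\em inequality} of cycles,
\[
\cycle(M_\infty/\pi M_\infty) = \sum_i \cycle(M^i_\infty/M^{i-1}_\infty) \;\ge\; \sum_i \cycle(\Spec \bar R^i_\infty /\pi),
\]
the inequality coming from the fact (via Gee's weight results~\cite{gee061}) that each $M^i_\infty/M^{i-1}_\infty$ is supported on {\em all} of the generically reduced scheme $\Spec \bar R^i_\infty$.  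It also has $\cycle(\bar R_\infty/\pi) = \cycle(M_\infty/\pi M_\infty)$ via Proposition~\ref{prop:cutting out} because $M_\infty$ has generic rank one over $\bar R_\infty$.  Then the decisive step, which is the genuinely new idea relative to your sketch, is to specialize this cycle inequality at the closed point of $\Spec \bar R_\infty/\pi$ using Lemma~\ref{lem:multiplicities from cycles}, and observe that the resulting inequality of Hilbert--Samuel multiplicities is already known to be an {\em equality} by Kisin's numerical Breuil--M\'ezard theorem (\cite[Lem.~2.3.1]{kisinfmc}).  Since a nonzero effective cycle on a local scheme has strictly positive multiplicity at the closed point, the inequality of cycles must itself be an equality, and one finishes by cancelling the $\cboxtimes$-factors via Lemma~\ref{lem:product cycles}.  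In other words, rather than establishing rank-uniformity across components directly (which you leave open), the paper uses the inequality-plus-numerical-equality trick to force the equality of cycles, deriving the geometric statement as a {\em refinement} of the known numerical one rather than reproving it from scratch.  Without this step your proposed argument does not close.
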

\begin{remark}
{\em The usual form of the Breuil--M\'ezard conjecture, as stated in \cite{breuil-mezard}
and proved in \cite{kisinfmc}, can be recovered from this result by applying the
formula~(\ref{eqn:multiplicities from cycles}), and using the explicit description
of the cycles $\cC_{m,n}$ provided by
Proposition~\ref{prop:components}. (Note that while 
Proposition~\ref{prop:components} as stated
assumes that the Breuil--M\'ezard conjecture
holds for $\rbar$, our proof of Proposition~\ref{prop:components} will
actually use Theorem~\ref{thm:main theorem} for $\rbar$ as input, and so
this argument is not circular.)
}
\end{remark}
\begin{remark}
  {\em The hypothesis on $\rbar$ in Theorem \ref{thm:main theorem} is
    slightly weaker than that made in the analogous result in
    \cite{kisinfmc}. This is due to our use of potential modularity
    theorems to realise local representations globally, which is more
    flexible than the construction of \cite{kisinfmc} using CM
    forms. We remark that Pa\v{s}k\={u}nas (\cite{vytasBM12}) has reproved Kisin's
    results (and our generalisation of them) by purely local means under a similarly weakened
    hypothesis on $\rbar$.}
\end{remark}
\begin{remark}
  {\em Theorem \ref{thm:main theorem} is proved via a refinement of the global
    argument made in \cite{kisinfmc}, and uses the local arguments of
    \cite{kisinfmc} (using the $p$-adic Langlands correspondence) as
    an input. In particular, it does not give a new proof of the usual
    form of the Breuil--M\'ezard conjecture.}
\end{remark}
\begin{remark}\label{rem: comparison to BMII}
{\em
In the recent sequel \cite{breuilmezardIII} to their paper \cite{breuil-mezard},
Breuil and M\'ezard have constructed,
for generic $\rbar$,
a correspondence between the irreducible components of $\Spec R^{\square}(a,b,\tau,\rbar)/\pi$ 
and the Serre weights.  We show (in Subsection~\ref{subsec:comparison})
that this coincides with the correspondence
$\sigma_{m,n} \mapsto \cC_{m,n}$.
(Note that when $\rbar$ is generic in the
sense of \cite{breuilmezardIII}, each cycle $\cC_{m,n}$ consists of a
single component.)
Thus our results may be reviewed as a refinement of those of \cite{breuilmezardIII}.
We also note that in \cite{breuilmezardIII}, the authors proceed by
refining the local arguments of \cite{kisinfmc}, while (as already noted) 
in this note we proceed by refining the global arguments of {\em ibid.} 
Thus the approaches of \cite{breuilmezardIII} and of the present note may
be regarded as being somewhat complementary to one another.

}
\end{remark}
\begin{remark}
  {\em In the paper \cite{geekisin}, similar techniques to those of
    this paper are used to prove the Breuil--M\'ezard
    conjecture for two-dimensional potentially Barsotti--Tate
    representations of $G_K$. The key additional ingredients which are
    available in that case, but not in general, are the automorphy
    lifting theorems for potentially Barsotti--Tate representations
    proved in \cite{kis04} and \cite{MR2280776}. The arguments of the present paper are in
    large part based on those of \cite{geekisin}, which in turn relies
    on the strategy outlined in \cite{kisinICM}; in particular, our
    implementation of the patching argument for unitary groups is
    simply the natural adaptation of the arguments of \cite{geekisin}
    to higher rank unitary groups. Theorem~\ref{thm: purely local
      statement for unramified regular pot-BT} below gives a geometric
    refinement of some of the results of~\cite{geekisin}.}
\end{remark}
\subsection{Acknowledgments}
The debt that the arguments of this paper owe to the work of Mark
Kisin will be obvious to the reader; in particular, several of our
main arguments are closely based on arguments from \cite{kisinfmc},
\cite{kisinICM} and \cite{geekisin}. We are also grateful to him, as
well as to Christophe Breuil and Kevin Buzzard, for
helpful comments on an earlier draft of this paper. We would like to
thank Florian Herzig for helpful conversations about the
representation theory of $\GL_n(\F_q)$, and Tom Barnet-Lamb, David
Geraghty, Robert Guralnick, and Florian Herzig for helpful conversations about the material in Appendix
\ref{sec:local to global}. We would like to thank the anonymous
referee for a careful reading, and many helpful comments and suggestions.
\subsection{Notation and Conventions}\label{subsec:notation} Throughout this paper,
$p$ will denote an odd prime.

If $K$ is a field, then we let $G_K$ denote its absolute Galois group. 
If $K$ is furthermore a
finite extension of $\bb{Q}_p$ for some $p$, then we write $I_K$ for the
inertia subgroup of $G_K$.
If $F$ is a number
field and $v$ is a finite place of $F$ then we let $\Frob_v$ denote a
geometric Frobenius element of $G_{F_v}$.

We let $\varepsilon$ denote the $p$-adic cyclotomic character, and
let $\varepsilonbar=\omega$ the mod $p$ cyclotomic character. We denote by
$\omegat$ the Teichm\"uller lift of $\omega$.
We let $\omega_2$ denote a choice of a
fundamental character of $\IQp$ of niveau $2$.

If $K$ is a $p$-adic field, if $\rho$ is a continuous de Rham
representation of $G_K$ over $\Qpbar$, and if $\tau:K \into \Qpbar$,
then we will write $\HT_\tau(\rho)$ for the multiset of Hodge--Tate
numbers of $\rho$ with respect to $\tau$.  By definition, if $W$ is a
de Rham representation of $G_K$ over $\Qpbar$ and if $\tau:K \into
\Qpbar$ then the multiset $\HT_\tau(W)$ contains $i$ with multiplicity
$\dim_{\Qpbar} (W \otimes_{\tau,K} \widehat{\overline{K}}(i))^{G_K}
$. Thus for example $\HT_\tau(\varepsilon)=\{ -1\}$. We will use this
convention throughout the paper, except in Section \ref{sec:GL2 Qp},
where we will use the opposite convention that $\varepsilon$ has
Hodge--Tate weight $1$. We apologise for this, but it seems to us to
be the best way to make what we write compatible with the existing
literature.

Let $K$ be a finite extension of $\Qp$, and let $\rec$ denote the local
Langlands correspondence from isomorphism classes of irreducible
smooth representations of $\GL_n(K)$ over $\C$ to isomorphism classes
of $n$-dimensional Frobenius semisimple Weil--Deligne representations
of $W_K$ defined in \cite{ht}. Fix an isomorphism $\imath:\Qpbar\to\C$. We define the
local Langlands correspondence $\rec_p$ over $\Qpbar$ by $\imath \circ
\rec_p = \rec \circ \imath$. This depends only on
$\imath^{-1}(\sqrt{p})$. We let $\Art_K:K^\times\to W_K^{ab}$ be the isomorphism
provided by local class field theory, which we normalise so that
uniformisers correspond to geometric Frobenius elements. 
We will write $\triv$ for the trivial $n$-dimensional
representation of some group, the precise group and choice of $n$
always being clear from the context.

When discussing deformations of Galois representations,
we will use the terms ``framed deformation'' (which originated
in~\cite{kis04}) and ``lifting'' (which originated in~\cite{cht})
interchangeably.

We write all matrix transposes on the left; so ${}^tA$ is the transpose of $A$.

\section{Background on multiplicities and cycles}
\label{sec:cycles}
In this preliminary section we provide details on the
notions of multiplicities and cycles that we outlined in the introduction.
\subsection{Hilbert--Samuel multiplicities}
Recall that
if $A$ is a Noetherian local ring with maximal ideal $\m$ of dimension~$d$,
and $M$ is a finite $A$-module, then there is polynomial
$P_M^A(X)$ of degree at most $d$ (the Hilbert--Samuel polynomial of $M$),
uniquely determined by the requirement 
that for $n\gg 0$, the value $P_M^A(n)$ is equal to the length of
$M/\m^{n+1}M$ as an $A$-module.
\begin{df}
{\em
The Hilbert--Samuel multiplicity
$e(M,A)$ is defined to be $d!$ times the
coefficient of $X^d$ in $P_M^A(X)$. We write $e(A)$ for $e(A,A)$.
}
\end{df}
Note in particular that if $A$ is Artinian, then $e(M,A)$
is simply the length of $M$ as an $A$-module.
\subsection{Cycles}
Let $\cX$ be a Noetherian scheme. 
\begin{df}
{\em
\begin{enumerate}
\item
Let $\cM$ be a coherent
sheaf on $\cX$, and write $\cZ$ to denote the
scheme-theoretic support of $\cM$ (i.e.\ $\cZ$ is the closed subscheme
of $\cX$ cut out by the annihilator ideal $\cI \subset \cO_X$ of $\cM$).
For any point $x \in \cX$,
we write $e(\cM,x)$ to denote the Hilbert--Samuel multiplicity $e(\cM_x,\cO_{\cZ,x}).$
\item
If $\cZ$ is a closed subscheme of $\cX$, then we write 
$e(\cZ,x) := e(\cO_{\cZ},x)$ for all $x \in X$. 
(Note that $\cZ$ coincides with the scheme-theoretic support of $\cO_{\cZ}$,
and so by definition
this is equal to the multiplicity $e(\cO_{\cZ,x})$ of the local ring
$\cO_{\cZ,x}$.)
\end{enumerate}
}
\end{df}
\begin{remark}
\label{rem:multiplicities}
{\em
In some situations the multiplicity $e(\cM,x)$ is particularly simple to describe.
\begin{enumerate}
\item
If $x$ does not lie in the support of $\cM$, i.e.\ if $\cM_x = 0$,
then $e(\cM,x) = 0$.
\item
If $x$ is a generic point of the scheme-theoretic support $\cZ$ of $\cM$
(so that $\cO_{\cZ,x}$ is an Artinian ring), 
then $e(\cM,x)$ is simply the length of $\cM_x$ as an $\cO_{\cZ,x}$-module.
\end{enumerate}
}
\end{remark}
\begin{df}
{\em
\begin{enumerate}
\item
We say that a point $x \in X$ is
of dimension $d$, and write $\dim(x) = d$, if its closure $\overline{\{x\}}$
is of dimension $d$.
\item
A $d$-dimensional cycle on $\cX$ is a formal finite $\mathbb Z$-linear combination
of points of $\cX$ of dimension $d$.
\item We write $\cX\ge 0$ if $\cX$ is in fact a  $\mathbb Z_{\ge 0}$-linear combination
of points of $\cX$ of dimension $d$, and we write $\cX\ge \cY$ if
$\cX-\cY\ge 0$.\end{enumerate}
}
\end{df}
\begin{df}
\label{df:multiplicities for cycles}
{\em
If $\displaystyle Z = \sum_{\dim(x) = d} n_x x$ is a $d$-dimensional cycle on $\cX$,
then for any point $y \in \cX$, we define the multiplicity $e(Z,y)$ via the
formula
$$e(Z,y) := \sum_{\dim(x) = d} n_x e(\overline{\{x\}},y).$$
(Here $\overline{\{x\}}$ denotes the closure of the point $x$.)
}
\end{df}
\begin{df} 
\label{df:cycles}
{\em
\begin{enumerate}
\item
If $d\geq 0$ is a non-negative integer, and $\cM$ is a coherent sheaf on $\cX$ whose
support has dimension $\leq d$, then we define the $d$-dimensional cycle $\cycle_d(\cM)$
associated to $\cM$ as follows:
$$\cycle_d(\cM) := \sum_{\dim(x) = d} e(\cM,x) x,$$
where, as indicated, the sum ranges over all points of $\cX$ of dimension $d$.
(Our assumption on the dimension of the support of $\cM$ ensures that any point of dimension $d$
lying in the support $\cM$ is necessarily a generic point of that support,
and hence that there are only
finitely many such points lying in the support of $\cM$.
Thus all but finitely many terms appearing in the sum defining $\cycle_d(\cM)$
vanish, and so this sum is in fact well-defined. Note also that, if we let
$\cZ$ denote the scheme-theoretic support of $\cM$,
then by Remark~\ref{rem:multiplicities},
the multiplicity $e(\cM,x)$ is simply the length of $\cM_x$ as an $\cO_{\cZ,x}$-module.)
\item
If the support of $\cM$ is finite-dimensional of dimension $d$, then we write
simply $\cycle(\cM) := \cycle_d(\cM)$.
\item
If $\cZ$ is a closed subset of $\cX$, then we write
$\cycle_d(\cZ) := \cycle_d(\cO_{\cZ})$,
and denote this simply by $\cycle(\cZ)$ if $\cZ$ is finite-dimensional of dimension $d$.
\end{enumerate}
}
\end{df}
\begin{remark}
{\em If $\cZ$ is equidimensional of some finite dimension $d$,
then $\cycle(\cZ)$ encodes the irreducible components of $\cZ$,
together with the multiplicity with which each component appears in
$\cZ$. If $\cZ$ has dimension less than $d$,
then $\cycle_d(\cZ)=0$.
}
\end{remark}
\begin{lemma}
\label{lem:cycles in exact sequences}
If $0 \to \cM' \to \cM \to \cM'' \to 0$ is an exact sequence
of coherent sheaves on $\cX$, such that the support of $\cM$ is of dimension~$\leq d$
{\em (}or equivalently, such that the supports of each of $\cM'$ and $\cM''$ are
of dimension $\leq d${\em )}, then
$$\cycle_d(\cM) = \cycle_d(\cM') + \cycle_d(\cM'').$$
\end{lemma}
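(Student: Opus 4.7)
The plan is to reduce the claim to the additivity of length in short exact sequences of modules of finite length, by checking the equality coefficient-by-coefficient at each $d$-dimensional point of $\cX$.

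More precisely, by Definition~\ref{df:cycles}, the asserted identity is an equation of formal $\Z$-linear combinations of points of $\cX$ of dimension $d$, so it suffices to fix such a point $x$ and verify that $e(\cM,x) = e(\cM',x) + e(\cM'',x)$. First note that the supports of $\cM'$ and $\cM''$ are both contained in the support of $\cM$, so the hypothesis on $\dim\Supp(\cM)$ implies the analogous hypothesis on $\cM'$ and $\cM''$, and the three multiplicities are defined. Localizing the given short exact sequence at $x$ produces an exact sequence of $\cO_{\cX,x}$-modules
\[
0 \to \cM'_x \to \cM_x \to \cM''_x \to 0.
\]
If $x$ does not lie in the support of $\cM$, then $\cM_x = 0$, forcing $\cM'_x = \cM''_x = 0$, and Remark~\ref{rem:multiplicities}(1) gives $e(\cM,x) = e(\cM',x) = e(\cM'',x) = 0$, so both sides vanish.

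Otherwise $x$ lies in $\Supp(\cM)$, and since $\Supp(\cM)$ has dimension $\le d$ while $\dim(x) = d$, the point $x$ is a generic point of $\Supp(\cM)$. Locally at $x$, the support is then concentrated at the closed point of $\Spec \cO_{\cX,x}$, so the finitely generated $\cO_{\cX,x}$-module $\cM_x$ is annihilated by some power of the maximal ideal, and hence has finite length as an $\cO_{\cX,x}$-module; the same holds for the submodule $\cM'_x$ and the quotient $\cM''_x$. For any such finite-length stalk, if $\cZ$ denotes the scheme-theoretic support of the corresponding sheaf, then the $\cO_{\cX,x}$-module structure factors through $\cO_{\cZ,x}$, so composition series over the two rings coincide and the lengths agree; by Remark~\ref{rem:multiplicities}(2) this common length is precisely the multiplicity appearing in the definition of $\cycle_d$. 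Applying the standard additivity of length in short exact sequences of finite-length modules,
\[
\mathrm{length}_{\cO_{\cX,x}}(\cM_x) = \mathrm{length}_{\cO_{\cX,x}}(\cM'_x) + \mathrm{length}_{\cO_{\cX,x}}(\cM''_x),
\]
yields $e(\cM,x) = e(\cM',x) + e(\cM'',x)$, completing the check.

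The only mild subtlety, and the point that most requires care, is that the three multiplicities $e(\cM,x)$, $e(\cM',x)$, $e(\cM'',x)$ are a priori defined relative to three different scheme-theoretic supports; the step above that reconciles length over $\cO_{\cX,x}$ with length over each of these local rings is what makes the additivity statement literally an identity in $\Z$ rather than just up to the ambiguity of the base ring. Once that translation is made, the proof reduces to a one-line invocation of additivity of length, and no deeper input is needed.
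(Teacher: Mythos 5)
Your proof is correct and follows essentially the same route as the paper's: at each $d$-dimensional point $x$ the three multiplicities are finite lengths, the different scheme-theoretic-support rings are reconciled because length is insensitive to passing to a quotient local ring through which the module structure factors, and the identity then follows from additivity of length in short exact sequences.
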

\begin{proof}
Let $\cZ$ (resp.\ $\cZ'$ and $\cZ''$) denote the support of $\cM$ (resp.\ of $\cM'$
and $\cM''$), so that $\cZ',\cZ''\subseteq\cZ$.
As already noted in the statement of Definition~\ref{df:cycles},
if $x \in \cX$ is of dimension $d$,
then $e(\cM,x)$ (resp.\ $e(\cM',x)$, resp.\ $e(\cM'',x)$)
is simply the length of $\cM_x$ as an $\cO_{\cZ,x}$-module
(resp.\ the length of $\cM'_x$ as an $\cO_{\cZ',x}$-module,
resp.\ the length of $\cM''_x$ as an $\cO_{\cZ'',x}$-module).
Since each of $\cO_{\cZ',x}$ and $\cO_{\cZ'',x}$ is a quotient
of $\cO_{\cZ,x}$, the claimed additivity of cycles follows from
the additivity of lengths in exact sequences.
\end{proof}
The following lemma records formula~(\ref{eqn:multiplicities from
  cycles}), which was stated in
the introduction.
\begin{lemma}
\label{lem:multiplicities from cycles}
If $\cM$ is a coherent sheaf on $\cX$ with support of dimension $d$,
and if $\cycle(\cM)$
is the cycle associated to $\cM$, 
then for any point $y \in X$, we have the formula
$$e(\cM,y) = e\bigl(\cycle(\cM),y\bigr).$$
\end{lemma}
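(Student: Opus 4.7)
The plan is to reduce the identity to the classical \emph{associativity formula} for Hilbert--Samuel multiplicities. First I would localise the problem at $y$: letting $\cZ$ denote the scheme-theoretic support of $\cM$, set $A := \cO_{\cZ, y}$ and $M := \cM_y$, so that $e(\cM, y) = e(M, A)$ by Definition~\ref{df:multiplicities for cycles} (strictly speaking, by the definition of $e(\cM, x)$ in the preceding subsection). If $y \notin \cZ$ then $M = 0$ and both sides vanish trivially, so I may assume $y \in \cZ$.

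Next I would translate the right-hand side into ring-theoretic language. By Definition~\ref{df:multiplicities for cycles},
$$e(\cycle(\cM), y) \;=\; \sum_{\dim(x) = d} e(\cM, x) \cdot e(\overline{\{x\}}, y),$$
where by Remark~\ref{rem:multiplicities} only those $x$ that are generic points of $\cZ$ with $y \in \overline{\{x\}}$ contribute non-trivially. Such $x$ correspond bijectively to the minimal primes $\mathfrak{p}_x$ of $A$. For such $x$, the localisation $A_{\mathfrak{p}_x}$ equals $\cO_{\cZ, x}$, and Remark~\ref{rem:multiplicities}(2) identifies $e(\cM, x)$ with $\ell_{A_{\mathfrak{p}_x}}(M_{\mathfrak{p}_x})$. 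Moreover, $e(\overline{\{x\}}, y) = e(A/\mathfrak{p}_x, A/\mathfrak{p}_x)$, which coincides with the mixed multiplicity $e(A/\mathfrak{p}_x, A)$ whenever $\dim A/\mathfrak{p}_x = \dim A$, because both quantities are computed from the same Hilbert--Samuel polynomial: one has $\mathfrak{m}^n(A/\mathfrak{p}_x) = (\mathfrak{m}/\mathfrak{p}_x)^n$, so the lengths of the relevant quotients agree.

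The final step is to invoke the associativity formula for Hilbert--Samuel multiplicities (e.g.\ Matsumura, \emph{Commutative Ring Theory}, Theorem~14.7):
$$e(M, A) \;=\; \sum_{\mathfrak{p}} \ell_{A_{\mathfrak{p}}}(M_{\mathfrak{p}}) \cdot e(A/\mathfrak{p}, A),$$
the sum ranging over those minimal primes $\mathfrak{p}$ of $A$ satisfying $\dim A/\mathfrak{p} = \dim A$. Plugging in the translations just described yields the claimed identity $e(\cM, y) = e(\cycle(\cM), y)$. The only step requiring genuine care is verifying that the indexing sets of the two sums match up: one must check that a generic point $x$ of $\cZ$ of (global) dimension $d$ with $y \in \overline{\{x\}}$ corresponds precisely to a minimal prime $\mathfrak{p}_x$ of $A$ with $\dim A/\mathfrak{p}_x = \dim A$, and that primes of strictly smaller dimension contribute nothing on either side. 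This is a direct consequence of the hypothesis that the support of $\cM$ has dimension $d$, together with the fact that $e(\overline{\{x\}}, y) = 0$ whenever $y$ lies outside $\overline{\{x\}}$, and $e(\cM, x) = 0$ whenever $x$ lies outside $\cZ$.
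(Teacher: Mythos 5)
Your proof is correct and follows the same route as the paper, which proves the lemma simply by quoting the associativity formula (Theorem~14.7 in Matsumura, exactly the reference you use); you have merely made explicit the localisation at $y$ and the dictionary identifying the terms of the cycle with the terms of the associativity sum, steps the paper leaves implicit.
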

\begin{proof}
This follows immediately from Theorem 14.7 of \cite{MR1011461}.
\end{proof}

We now establish a cycle-theoretic version of \cite[Prop.~1.3.4]{kisinfmc},
whose proof follows the same lines as the proof of that result.

\begin{lemma}
\label{lem:null}
Let $\cX$ be a Noetherian scheme of finite dimension $d$,
and let $f \in \cO_{\cX}(\cX)$ be regular 
{\em (}i.e.\ a non-zero divisor in each stalk of $\cO_{\cX}${\em )}.
If $\cM$ is an $f$-torsion free coherent sheaf on $\cX$ which
is supported in dimension $d-1$, then $\cM/f\cM$ is supported in
dimension $d-2$.
\end{lemma}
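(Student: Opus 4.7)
The plan is to localize at a point $x \in \cX$ of dimension $d-1$ and show directly that $(\cM/f\cM)_x = 0$; since the support of $\cM/f\cM$ is contained in the support of $\cM$, which has dimension $\leq d-1$, this is exactly what it means for $\cM/f\cM$ to be supported in dimension $\leq d-2$.

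First I would dispose of the trivial case: if $x$ is not in the support of $\cM$, then $\cM_x = 0$ and there is nothing to do. Otherwise, $x \in \cZ$, where $\cZ$ denotes the scheme-theoretic support of $\cM$. Since $\cZ$ has dimension $\leq d-1$ and $\dim(x) = d-1$, the closure $\overline{\{x\}}$ must be an irreducible component of $\cZ$ and $x$ is its generic point. In particular, the stalk $\cO_{\cZ,x}$ is a zero-dimensional Noetherian local ring, hence Artinian.

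The key observation is then that $\cM_x$ is a finitely generated module over the Artinian ring $\cO_{\cZ,x}$, and hence has finite length. By the hypothesis that $\cM$ is $f$-torsion free (working stalk-by-stalk), multiplication by $f$ is an injective endomorphism of $\cM_x$. Any injective endomorphism of a finite-length module over any ring is automatically surjective (by comparing lengths). Hence $f\cM_x = \cM_x$, so $(\cM/f\cM)_x = \cM_x/f\cM_x = 0$.

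This argument is essentially straightforward; the only subtle point that I want to be careful about is the reduction to the Artinian setting. The combination of the hypothesis that $\cM$ is supported in dimension $\leq d-1$ with the fact that $\dim(x) = d-1$ forces $x$ to be a generic point of the support, which is what brings the Artinian ring $\cO_{\cZ,x}$ into play and allows the length argument to work. No serious obstacle is anticipated; this is a clean finite-length argument, directly analogous to (and a cycle-theoretic strengthening of) the numerical statement in \cite[Prop.~1.3.4]{kisinfmc}.
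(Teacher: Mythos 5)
Your proof is correct and takes essentially the same approach as the paper: both arguments reduce to the generic points of $\Supp(\cM)$ and use $f$-torsion-freeness there, the paper by observing that no such generic point can lie in $V(f)$ (so $\Supp(\cM/f\cM)\subseteq\Supp(\cM)\cap V(f)$ has dimension $\le d-2$), you by the equivalent length-counting argument showing $f$ acts bijectively on the Artinian stalk $\cM_x$.
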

\begin{proof}
Since $\cM$ is $f$-torsion free,
no generic point of $\Supp(\cM)$ is contained in $V(f)$.  Thus $\Supp(\cM/f\cM)$ 
is of dimension $\leq d-2$, as claimed.
\end{proof}

\begin{lemma}
\label{lem:integral}
Let $\cX$ be a Noetherian integral scheme of finite dimension $d$,
and let $f \in \cO_{\cX}(\cX)$ be non-zero.  
If $\cM$ is an $f$-torsion free coherent sheaf on $\cX$ which is generically
free of rank one,
then $\cycle_{d-1}(\cM/f\cM) = \cycle\bigl(V(f)\bigr).$
\end{lemma}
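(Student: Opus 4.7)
The plan is to reduce the global cycle identity to a local length computation at each generic point of $V(f)$. First I would identify the supports: since $f$ annihilates $\cM/f\cM$, its support is contained in $V(f)$; conversely, $\cM$ being generically free of rank one on the integral scheme $\cX$ forces $\Supp(\cM)=\cX$, so $\cM_x\neq 0$ for every $x\in\cX$, and then Nakayama's lemma (applied at $x\in V(f)$, where $f$ lies in the maximal ideal of $\cO_{\cX,x}$) shows $(\cM/f\cM)_x\neq 0$. Hence $\Supp(\cM/f\cM)=V(f)$, a proper closed subset of $\cX$ of dimension at most $d-1$. Both cycles under comparison are therefore sums over the set of points $x\in V(f)$ with $\dim(x)=d-1$; each such $x$ is a generic point of $V(f)$, and Krull's Hauptidealsatz makes $\mathfrak{p}_x$ a height-one prime over $(f)$, so $A:=\cO_{\cX,x}$ is a one-dimensional Noetherian local domain. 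Writing $M:=\cM_x$, the desired equality of cycles reduces to the single identity $\ell_A(M/fM)=\ell_A(A/fA)$.

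Next I would carry out the local calculation via the Euler characteristic $\chi_f(N):=\ell_A(N/fN)-\ell_A(N[f])$ on finitely generated $A$-modules (both lengths are finite since $A/fA$ is Artinian). The snake lemma applied to the multiplication-by-$f$ maps on any short exact sequence makes $\chi_f$ additive on short exact sequences, and the identities $\ell(N)=\ell(N/fN)+\ell(fN)=\ell(N[f])+\ell(fN)$ for finite-length $N$ give $\chi_f(N)=0$ on such modules. Now any element of $\mathfrak{m}_A$ acts nilpotently on the torsion submodule $M_{\tor}$ (which is itself of finite length because $A$ has dimension one), so the hypothesis $M[f]=0$ forces $M_{\tor}=0$. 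Hence $M$ is torsion-free of rank one, and after clearing denominators $M$ is isomorphic to a nonzero ideal $I\subseteq A$ with finite-length cokernel $A/I$.

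Finally, additivity of $\chi_f$ on the sequence $0\to I\to A\to A/I\to 0$, together with the vanishing $\chi_f(A/I)=0$, yields $\chi_f(M)=\chi_f(I)=\chi_f(A)=\ell_A(A/fA)$; and since $M[f]=0$, we also have $\chi_f(M)=\ell_A(M/fM)$, completing the proof. The main obstacle is the verification that $M_{\tor}=0$ (so that the rank-one part accounts for all of $M$, legitimising the reduction to the ideal case); the other nontrivial input is the support identification in the first paragraph, which combines Nakayama's lemma with the generic rank-one hypothesis to ensure that the two cycles are indexed by the same finite set of points.
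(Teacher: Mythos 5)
Your proof is correct, but it follows a genuinely different route from the paper's. The paper argues globally: writing $x$ for the generic point of $\cX$, it splits $\cM$ via the map $\cM \to (i_x)_*\kappa(x)$ into a torsion kernel $\cM'$ and torsion-free image $\cM''$, embeds $\cM''$ into an invertible sheaf $\caL$, and plays a chain of short exact sequences against the additivity of $\cycle_{d-1}$ (Lemma~\ref{lem:cycles in exact sequences}), with Lemma~\ref{lem:null} discarding the contribution of $\cM'$. You instead localise at each $(d-1)$-dimensional point of $V(f)$, reduce to the length identity $\ell_A(M/fM)=\ell_A(A/fA)$ over the one-dimensional local domain $A=\cO_{\cX,x}$, and settle it by the Herbrand-quotient argument for $\chi_f(N)=\ell(N/fN)-\ell(N[f])$. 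Your observation that $f$-torsion-freeness forces the entire torsion submodule $M_{\tor}$ to vanish at a height-one prime is the local sharpening of Lemma~\ref{lem:null}, which only pushes the torsion contribution down to dimension $\leq d-2$ rather than killing it outright at the $(d-1)$-dimensional generic points. Both arguments in the end clear denominators and compare $M$ with an ideal (globally, $\cM''$ with the invertible $\caL$), so the mechanisms are close cousins; your packaging is more self-contained and makes the invariance under torsion-free rank-one modification explicit, while the paper's framing feeds more naturally into the induction of Proposition~\ref{prop:cutting out}, whose engine is precisely $\cycle_{d-1}$-additivity in short exact sequences.
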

\begin{proof}
Let $x$ be the generic point of $\cX$, and $i_x:\Spec \kappa(x) \to
\cX$ the canonical map.  By assumption $\cM_x$ is one-dimensional
over $\kappa(x)$, and so we may find a morphism of quasi-coherent sheaves
$\cM \to (i_x)_*\kappa(x)$ whose kernel $\cM'$ is torsion.
The image $\cM''$ of this morphism is a coherent subsheaf of $(i_x)_* \kappa(x)$,
and hence is contained in an invertible sheaf $\caL$.
Let $\caL'$ denote the cokernel of the inclusion $\cM'' \hookrightarrow \caL$.

Consider first the exact sequence
$$0 \to \cM' \to \cM \to \cM'' \to 0.$$ All the terms in this sequence
are $f$-torsion free, and so 
$$0 \to \cM'/f\cM' \to \cM/f\cM \to \cM''/f\cM'' \to 0$$ is again exact. 
Lemma~\ref{lem:null} shows that $\cycle_{d-1}(\cM'/f\cM') = 0$, 
and Lemma~\ref{lem:cycles in exact sequences} then implies that
\ssinc
\begin{equation}
\label{eqn:equality}
\cycle_{d-1}(\cM/f\cM) = \cycle_{d-1}(\cM''/f\cM'').
\end{equation}

Next consider the exact sequence $0 \to \caL'[f] \to \caL' \to \caL' \to \caL'/f\caL' \to 0.$
Since all the terms in this exact sequence are torsion, and so supported in dimension~$d-1$,
we see from Lemma~\ref{lem:cycles in exact sequences}
that
\ssinc
\begin{equation}
\label{eqn:equality two}
\cycle_{d-1}(\caL'[f]) = \cycle_{d-1}(\caL'/f\caL').
\end{equation}

Finally, consider the exact sequence
$0 \to \cM'' \to \caL \to \caL' \to 0.$  The first two terms in this sequence are $f$-torsion
free, and so it induces an exact sequence
$0 \to \caL'[f] \to \cM''/f\cM'' \to \caL/f\caL \to \caL'/f\caL' \to 0.$
Again applying Lemma~\ref{lem:cycles in exact sequences}, together with~(\ref{eqn:equality two}),
we find that $\cycle_{d-1}(\cM''/f\cM'') = \cycle_{d-1}(\caL/f\caL)$.
Combining this with~(\ref{eqn:equality}), together with the fact that
$\caL$ is an invertible sheaf,
we find that $\cycle_{d-1}(\cM/f\cM) = \cycle\bigl(V(f)\bigr),$
as required.
\end{proof}

\begin{prop}
\label{prop:cutting out}
Let $\cX$ be a Noetherian scheme of finite dimension $d$,
and $f \in \cO_{\cX}(\cX)$ be regular
{\em (}i.e.\ a non-zero divisor 
in each stalk of $\cO_{\cX}${\em )}.
If $\cM$ is an $f$-torsion free coherent sheaf on $\cX$,
and if $\cycle_d(\CM) = \sum_{\dim(x) = d} n_x x,$
where, as indicated, $x$ runs over the $d$-dimensional points of $\cX$,
then the support of $\cM/f\cM$ has dimension $\leq d-1$,
and
$$\cycle_{d-1}(\cM/f\cM) = \sum_{\dim(x)= d}
n_x \cycle_{d-1}\bigl(\overline{\{x\}} \cap  V(f)\bigr).$$
\end{prop}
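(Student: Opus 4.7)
The plan is to argue via a prime filtration of $\cM$, reducing the identity to the two lemmas just proved (Lemmas~\ref{lem:null}, \ref{lem:integral}, and \ref{lem:cycles in exact sequences}). The statement that $\cM/f\cM$ is supported in dimension $\leq d-1$ is immediate: at any generic point $x$ of $\Supp(\cM)$, the Artinian local ring $\cO_{\Supp(\cM),x}$ supports a finitely generated module $\cM_x$ on which $f$ acts as a nonzerodivisor by hypothesis, hence as a unit; so $(\cM/f\cM)_x=0$, and $\Supp(\cM/f\cM)$ contains no generic point of $\Supp(\cM)$, giving $\dim\Supp(\cM/f\cM)<\dim\Supp(\cM)\le d$.

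For the cycle identity, since equality of $(d-1)$-dimensional cycles can be checked pointwise (via Lemma~\ref{lem:multiplicities from cycles}) and each coefficient is a local invariant, I would restrict to an affine open $U=\Spec R\subseteq\cX$, set $M:=\Gamma(U,\cM)$, and choose a prime filtration
\[
0=M_0\subset M_1\subset\cdots\subset M_n=M,\qquad M_j/M_{j-1}\cong R/q_j,
\]
with $q_j\subset R$ prime. Each $M_j$ is $f$-torsion free as a submodule of $M$, so tensoring $0\to M_{j-1}\to M_j\to R/q_j\to 0$ with $R/fR$ and using the length-two free resolution $0\to R\xrightarrow{f}R\to R/fR\to 0$ yields the four-term exact sequence
\[
0\to (R/q_j)[f]\to M_{j-1}/fM_{j-1}\to M_j/fM_j\to (R/q_j)/f(R/q_j)\to 0.
\]

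The analysis then splits into two cases. When $\dim V(q_j)\le d-1$, the two short exact sequences extracted from the Koszul-type sequence $0\to (R/q_j)[f]\to R/q_j\xrightarrow{f}R/q_j\to (R/q_j)/f\to 0$, together with Lemma~\ref{lem:cycles in exact sequences}, give $\cycle_{d-1}((R/q_j)[f])=\cycle_{d-1}((R/q_j)/f)$; applying Lemma~\ref{lem:cycles in exact sequences} to the four-term sequence above then forces $\cycle_{d-1}(M_j/fM_j)=\cycle_{d-1}(M_{j-1}/fM_{j-1})$, so this piece contributes nothing. When $\dim V(q_j)=d$, the prime $q_j$ is minimal in $R$ and corresponds to some $d$-dimensional point $x_{i(j)}$ of $U$; regularity of $f$ gives $f\notin q_j$, hence $(R/q_j)[f]=0$, and Lemma~\ref{lem:integral} applied to $\cO_{V(q_j)}$ on the integral scheme $V(q_j)$ identifies $\cycle_{d-1}((R/q_j)/f)$ with $\cycle_{d-1}(\overline{\{x_{i(j)}\}}\cap V(f))$; the four-term sequence together with Lemma~\ref{lem:cycles in exact sequences} then yields $\cycle_{d-1}(M_j/fM_j)=\cycle_{d-1}(M_{j-1}/fM_{j-1})+\cycle_{d-1}(\overline{\{x_{i(j)}\}}\cap V(f))$.

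Telescoping produces $\cycle_{d-1}(\cM/f\cM)|_U=\sum_i c_i\cdot\cycle_{d-1}(\overline{\{x_i\}}\cap V(f))$, where $x_i$ ranges over the $d$-dimensional points of $U$ and $c_i=\#\{j:q_j=p_i\}$ counts the occurrences of the corresponding minimal prime $p_i\subset R$. Localizing the prime filtration at $p_i$ computes $c_i=\mathrm{length}_{R_{p_i}}(M_{p_i})$, and the latter equals $e(\cM,x_i)=n_{x_i}$ by Remark~\ref{rem:multiplicities}. The main obstacle is the bookkeeping for the lower-dimensional case---in particular the cancellation $\cycle_{d-1}((R/q_j)[f])=\cycle_{d-1}((R/q_j)/f)$---which is what ensures that only the top-dimensional graded pieces contribute to $\cycle_{d-1}(\cM/f\cM)$; once this is established, the identity follows by telescoping and gluing over an affine cover.
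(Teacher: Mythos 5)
Your proof is correct, and it takes a somewhat different route from the paper's, though both rest on the same two key lemmas. The paper argues by induction on the total multiplicity $n = \sum_x n_x$: it picks a single $d$-dimensional point $x$ with $n_x>0$, builds a non-zero map $\cM \to (i_x)_*\kappa(x)$ whose image $\cM''$ is an $f$-torsion-free sheaf supported on $\overline{\{x\}}$ and generically free of rank one and whose kernel $\cM'$ satisfies $\cycle_d(\cM')=\cycle_d(\cM)-x$, applies Lemmas~\ref{lem:cycles in exact sequences} and~\ref{lem:integral} to the resulting short exact sequence of quotients mod $f$, and reduces to $\cM'$, with the base case $n=0$ handled by Lemma~\ref{lem:null}. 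Your argument instead passes to an affine open, takes a full prime filtration of the module, and sorts the graded pieces $R/q_j$ by $\dim V(q_j)$: when $\dim V(q_j)=d$ the prime $q_j$ is minimal, so $f\notin q_j$, and Lemma~\ref{lem:integral} applied on the integral scheme $V(q_j)$ contributes $\cycle_{d-1}(\overline{\{x_{i(j)}\}}\cap V(f))$; when $\dim V(q_j)\le d-1$, your Koszul cancellation $\cycle_{d-1}((R/q_j)[f])=\cycle_{d-1}((R/q_j)/f)$ kills the contribution. One small point in your favor here: you cannot simply cite Lemma~\ref{lem:null} for the low-dimensional pieces, since $R/q_j$ need not be $f$-torsion free (indeed $f$ may lie in $q_j$); your four-term sequence handles that correctly. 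The gluing step at the end is legitimate because coefficients of $(d-1)$-cycles are local invariants and any $d$-dimensional point specializing into an affine open already lies in that open, and the identification $c_i=\mathrm{length}_{R_{p_i}}(M_{p_i})=e(\cM,x_i)$ via Remark~\ref{rem:multiplicities} is right. The paper's induction stays global and sheaf-theoretic, avoiding the cover and the case split; your prime-filtration d\'evissage is a more uniform and more conventional argument that treats all graded pieces at once, at the cost of the localize-and-glue bookkeeping and the explicit low-dimensional cancellation.
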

\begin{proof}
We argue by induction on the
quantity $n:= \sum_{\dim{x} = d} n_x$,
with the case when $n = 0$
being handled by Lemma~\ref{lem:null}.
Suppose now that $n$ is an arbitrary positive integer,
and choose $x$ of dimension $d$ such that $n_x > 0$.
We may then find a non-zero surjection $\cM_x \to \kappa(x)$ (since
by definition $n_x$ is the length of $\cM_x$), which induces a non-zero
map of quasi-coherent sheaves $\cM \to (i_x)_* \kappa(x)$ (where $i_x$
is the canonical map $\Spec \kappa(x) \to \cX$).
Let $\cM'$ denote the kernel of this map, and $\cM''$ its image, so
that the sequence $0 \to \cM' \to \cM \to \cM'' \to 0$ is exact.

The sheaf $\cM'$ is $f$-torsion free (since it is a subsheaf of $\cM$) 
while the sheaf $\cM''$ is $f$-torsion free, supported on $\overline{\{x\}}$,
and generically free of rank one over this component (being a non-zero
coherent subsheaf of $(i_x)_*\kappa(x)$).
Thus we obtain a short exact sequence
$$0 \to \cM'/f\cM' \to \cM/f\cM \to \cM''/f\cM'' \to 0,$$
and Lemmas~\ref{lem:cycles in exact sequences} and~\ref{lem:integral}
show that
\begin{multline*}
\cycle_{d-1}(\cM/f\cM) = \cycle_{d-1}(\cM'/f\cM') + \cycle_{d-1}(\cM''/f\cM'') \\
= \cycle_{d-1}(\cM'/f\cM') + \cycle_{d-1}\bigl(\overline{\{x\}}\cap V(f)\bigr).
\end{multline*}
The proposition follows by induction.
\end{proof}

We close this section with a result about the product of cycles.  In fact,
we will need to apply such a result in the context of a completed tensor product
of complete Noetherian local $k$-algebras, for some field $k$ (which is fixed for the
remainder of this discussion),
and so we restrict our attention to that particular context.

Note that if $A$ and $B$ are complete Noetherian local $k$-algebras,
and if $\mathfrak p$ and $\mathfrak q$ are primes of $A$ and $B$ respectively,
such that $A/\mathfrak p$ is of dimension $d$ and $B/\mathfrak q$ is of dimension $e$,
then $A/\mathfrak p\cotimes_k B/\mathfrak q$ is a quotient of $A\cotimes_k B$ of
dimension $d + e$.  Hence $\Spec A/\mathfrak p \cotimes_k B/\mathfrak q$ is
a closed subscheme of $\Spec A\cotimes_k B$ of dimension $d + e$, 
and we write 
$$\cycle(\Spec A/\mathfrak p)\times_k \cycle(\Spec B/\mathfrak q) :=  
\cycle(\Spec A/\mathfrak p\cotimes_k B/\mathfrak q),
$$
and then extend this by linearity to a bilinear product from $d$-dimensional cycles
on $\Spec A$ and $e$-dimensional cycles on $\Spec B$ to $(d + e)$-dimensional cycles on
$\Spec A\cotimes_k B$.

If $M$ and $N$ are finitely generated $A$- and $B$-modules respectively,
giving rise to coherent sheaves $\mathcal M$ and $\mathcal N$
on $\Spec A$ and $\Spec B$ respectively, then the completed tensor product
$M\cotimes_k N$ gives rise to a coherent sheaf on $\Spec A\cotimes_k B$,
which we denote by $\mathcal M\cboxtimes \mathcal N$.

\begin{lemma}
\label{lem:product cycles}
In the context of the preceding discussion,
if the support of $\mathcal M$ and $\mathcal N$ are of dimensions 
$d$ and $e$ respectively, then the support of $\mathcal M\cboxtimes \mathcal N$
is of dimension $d + e$,
and
$$
\cycle_{d+e}(\mathcal M \cboxtimes \mathcal N) =
\cycle_d(\mathcal M)\times_k \cycle_e(\mathcal N).
$$
\end{lemma}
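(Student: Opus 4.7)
The plan is to argue by d\'evissage using prime filtrations of $M$ and $N$, reducing bilinearly to the case where $\mathcal M = \mathcal O_{\Spec A/\mathfrak p}$ and $\mathcal N = \mathcal O_{\Spec B/\mathfrak q}$ for primes $\mathfrak p \subset A$ and $\mathfrak q \subset B$ with $\dim A/\mathfrak p = d$ and $\dim B/\mathfrak q = e$. In that reduced case, both sides of the desired equality are literally equal to $\cycle(\Spec A/\mathfrak p \cotimes_k B/\mathfrak q)$ by the very definition of $\times_k$, and the lemma follows immediately.

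To set this up, I would first invoke the standard prime filtration: any finitely generated module $M$ over a Noetherian ring $A$ admits a finite filtration $0 = M_0 \subset M_1 \subset \cdots \subset M_r = M$ with $M_i/M_{i-1} \cong A/\mathfrak p_i$ for certain primes $\mathfrak p_i$ of $A$, and $\cycle_d(\mathcal M)$ is the sum of those $\mathfrak p_i$ with $\dim A/\mathfrak p_i = d$ (counted with multiplicity). The crucial technical input is that $- \cotimes_k -$ is exact in each variable on finitely generated modules over the complete Noetherian local $k$-algebras $A$ and $B$; this rests on the fact that $M \cotimes_k N$ coincides with the $\mathfrak m$-adic completion of the ordinary tensor product $M \otimes_k N$ (where $\mathfrak m$ is the maximal ideal of $A\cotimes_k B$), and completion is exact on finitely generated modules over a Noetherian ring.

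Combined with the standard dimension formula $\dim(A/\mathfrak p \cotimes_k B/\mathfrak q) = \dim A/\mathfrak p + \dim B/\mathfrak q$ (checked by presenting both factors as quotients of formal power series rings over $k$), these inputs show that the prime filtrations of $M$ and $N$ induce a filtration of $M \cotimes_k N$ whose graded pieces are the $A/\mathfrak p_i \cotimes_k B/\mathfrak q_j$, each of dimension $\dim A/\mathfrak p_i + \dim B/\mathfrak q_j \leq d + e$; equality is attained by at least one pair, giving $\dim \Supp(\mathcal M \cboxtimes \mathcal N) = d + e$ as claimed. Repeated application of Lemma~\ref{lem:cycles in exact sequences} then reduces $\cycle_{d+e}(\mathcal M \cboxtimes \mathcal N)$ to the sum of cycles coming from the top-dimensional graded pieces, namely the pairs $(\mathfrak p_i, \mathfrak q_j)$ with $\dim A/\mathfrak p_i = d$ and $\dim B/\mathfrak q_j = e$; the remaining subquotients contribute zero to a $(d+e)$-dimensional cycle. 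Grouping these surviving terms reproduces $\cycle_d(\mathcal M) \times_k \cycle_e(\mathcal N)$ by the definition of the product of cycles.

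The principal obstacle is the verification of the exactness of the completed tensor product, together with the dimension formula, in this setting; once these (rather standard) facts are in hand, the remainder of the proof is formal bookkeeping with prime filtrations via Lemma~\ref{lem:cycles in exact sequences}.
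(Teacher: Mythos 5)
Your d\'evissage via prime filtrations is a valid alternative route to this lemma, but it is genuinely different from the one sketched in the paper. There, one first restricts to the supports (so $A$ and $B$ have dimensions $d$ and $e$), observes that the minimal primes of $A\cotimes_k B$ are exactly the $\fp(A\cotimes_k B)+\fq(A\cotimes_k B)$ with $\fp$, $\fq$ minimal in $A$, $B$, and then computes the Hilbert--Samuel multiplicity of $M\cotimes_k N$ at each such prime directly, the multiplicativity there being Lech's lemma (following \cite[Prop.~1.3.8]{kisinfmc}). Your argument instead reduces bilinearly to $M=A/\fp$, $N=B/\fq$ by prime filtrations, Lemma~\ref{lem:cycles in exact sequences}, and the dimension formula, which is cleaner book-keeping at the cost of requiring exactness of $\cotimes_k$ in each variable on finitely generated modules.

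That exactness is true in this setting, but your justification of it fails: $A\otimes_k B$ is not Noetherian (unless one of $A$, $B$ is Artinian), so the statement ``completion is exact on finitely generated modules over a Noetherian ring'' does not apply to $M\otimes_k N$ over $A\otimes_k B$. One correct argument: given $0\to M'\to M\to M''\to 0$, Artin--Rees over $A$ and exactness of $\otimes_k$ over the field $k$ give, for each $n$, an exact sequence of finite-dimensional $k$-vector spaces
$0\to \bigl(M'/(M'\cap\m_A^nM)\bigr)\otimes_k(N/\m_B^nN)\to (M/\m_A^nM)\otimes_k(N/\m_B^nN)\to (M''/\m_A^nM'')\otimes_k(N/\m_B^nN)\to 0,$
and the left-hand inverse system has surjective transition maps, so Mittag--Leffler gives exactness of the inverse limit; a second application of Artin--Rees identifies the limit of the left-hand terms with $M'\cotimes_k N$. (Alternatively, one may establish flatness of $A\cotimes_k B$ over $A$ and over $B$ and deduce the requisite Tor-vanishing.) With this repaired, your proof goes through.
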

\begin{proof}
  This is standard; we sketch the proof. Restricting to the support of
  $\mathcal M\cboxtimes \mathcal N$, we may assume that $M$ (resp.\
  $N$) is a faithful $A$-module (resp.\ $B$-module), so that $A$ has
  dimension $d$ and $B$ has dimension $e$. As in (for example) the
  proof of~\cite[Lemma 3.3(5)]{blght}, the distinct minimal primes of
  $A\cotimes_k B$ are precisely the $\fp(A\cotimes_k
  B)+\fq(A\cotimes_k B)$, where $\fp$ is a minimal prime of~$A$ and
  $\fq$ is a minimal prime of~$B$. We are thus reduced to checking
  that for each such $\fp,\fq$, we have \[e(M \cotimes_k
  N,(A/\fp)\cotimes_k(B/\fq))=e(M,(A/\fp))e(N,(B/\fq)),\]which follows
  from Lech's lemma \cite[Thm.\ 14.12]{MR1011461} exactly as in the
  proof of Proposition~1.3.8 of~\cite{kisinfmc}.
\end{proof}

\section{The geometric Breuil--M\'ezard Conjecture for two-dimensional
  representations of $\GQp$}\label{sec:GL2 Qp}In this section we
explain the Breuil--M\'ezard conjecture in its original setting, that
of two-dimensional representations of $\GQp$, and state our geometric
version. We then recall some of the details of the proof of the
original formulation of the conjecture from \cite{kisinfmc}, and show
that the proof may be extended to prove the geometric version. The one
difference from Kisin's notation and that of the present paper is that
we prefer not to fix one of the Hodge--Tate weights of our Galois
representations to be $0$; this makes no essential difference to any
of the arguments, but the additional flexibility that this notation
gives us is convenient in the exposition. We remind the reader that in
this section, our conventions for Hodge--Tate weights are that
$\varepsilon$ has Hodge--Tate weight $1$.
\subsection{The conjecture}\label{subsec: conjecture for GL2 Qp}
We begin by recalling some notation from \cite{kisinfmc}. Fix a prime
$p>2$ and a finite extension $E$ of $\Q_p$ (our coefficient field),
with ring of integers $\cO$,
residue field $\F$, and uniformiser $\pi$. 
We assume that $\# \F>5$, so that  $\PSL_2(\F)$ is a simple group.

Let
$\rbar:\GQp\to\GL_2(\F)$
be a continuous representation,
and let
$\tau:\IQp\to\GL_2(E)$ be an inertial type, i.e. a representation with
open kernel which extends to $\WQp$. Fix integers $a$, $b$ with $b\ge
0$ and a de Rham character $\psi:\GQp\to\cO^\times$ such that
$\overline{\psi\varepsilon}=\det\rbar$. We let $R^{\square,\psi}(a,b,\tau,\rbar)$ and
$R^{\square,\psi}_\cris(a,b,\tau,\rbar)$ be the framed
deformation
$\cO$-algebras which are universal for framed deformations of $\rbar$
which have determinant $\psi\varepsilon$, and are potentially semistable
(respectively potentially crystalline) with Hodge--Tate weights
$(a,a+b+1)$ and inertial type $\tau$.
As in Section~1.1.2 of \cite{kisinfmc}, we let $\sigma(\tau)$ and $\sigma_\cris(\tau)$ denote
the finite-dimensional irreducible $E$-representations of $\GL_2(\Zp)$
corresponding to $\tau$ via Henniart's inertial local Langlands
correspondence, we set
$\sigma(a,b,\tau)=(\det^a\otimes \Sym^{b}E^2)\otimes_E \sigma(\tau)$ and
$\sigma^\cris(a,b,\tau)=(\det^a\otimes \Sym^{b}E^2)\otimes_E \sigma^{\cris}(\tau)$,
and we let $L_{a,b,\tau}$ (respectively $L_{a,b,\tau}^\cris$) be a $\GL_2(\Zp)$-stable $\cO$-lattice in
$\sigma(a,b,\tau)$ (respectively $\sigma^\cris(a,b,\tau)$). Write $\sigma_{m,n}$ for the representation
$\det^m\otimes \Sym^n\F^2$ of $\GL_2(\Fp)$, $0\le m\le p-2$, $0\le n\le
p-1$, so that we may
write \[(L_{a,b,\tau}\otimes_\cO\F)^\semis\isoto\oplus_{m,n}\sigma_{m,n}^{a_{m,n}},\]and
 \[(L_{a,b,\tau}^\cris\otimes_\cO\F)^\semis\isoto\oplus_{m,n}\sigma_{m,n}^{a^\cris_{m,n}},\]
 for
some integers $a_{m,n}$, $a^\cris_{m,n}$.

We can now state the Breuil--M\'ezard
conjecture \cite{breuil-mezard}.
\begin{conj}\label{conj: original BM conjecture for GL2 Qp}There are integers $\mu_{m,n}(\rbar)$ depending only on
  $m$, $n$, and $\rbar$,
  such that for any $a$, $b$, $\tau$ with $\det\tau=\varepsilon^{1-2a-b}\psi|_{I_\Qp}$, \[e(R^{\square,\psi}(a,b,\tau,\rbar)/\pi)=\sum_{m,n}a_{m,n}\mu_{m,n}(\rbar),\]and \[e(R_\cris^{\square,\psi}(a,b,\tau,\rbar)/\pi)=\sum_{m,n}a^\cris_{m,n}\mu_{m,n}(\rbar).\] 
\end{conj}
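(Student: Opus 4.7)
The plan is to prove the conjecture by the Taylor--Wiles--Kisin patching method combined with the $p$-adic local Langlands correspondence for $\GL_2(\Qp)$, following the strategy of \cite{kisinfmc}. First I would \emph{globalize}: using a potential automorphy argument (along the lines of the appendix to this paper), realize $\rbar$ as the local-at-$p$ component of some automorphic mod $p$ Galois representation $\rbar_F : G_F \to \GL_2(\F)$ attached to an automorphic form on a definite quaternion algebra over a carefully chosen totally real field $F$ in which $p$ has a single place above it, arranging matters so that the standard Taylor--Wiles hypotheses can be satisfied at auxiliary places and the global determinant matches $\psi$ locally. I would then run patching on the completed cohomology (with fixed determinant $\psi\varepsilon$) of this quaternion algebra, producing a module $M_\infty$ over $R_\infty := R^{\square,\psi}(\rbar)[[x_1,\ldots,x_g]]$ together with a commuting continuous action of $\GL_2(\Zp)$, such that $M_\infty$ is faithful and finitely generated as an $\cO[[\GL_2(\Zp)]]\otimes_\cO R_\infty$-module and projective over $\cO[[\GL_2(\Zp)]]$ modulo the fixed central character.

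Next, for any $\GL_2(\Zp)$-stable $\cO$-lattice $L$ inside $\sigma(a,b,\tau)$ or $\sigma^{\cris}(a,b,\tau)$, form the finitely generated $R_\infty$-module
\[
M_\infty(L) \;:=\; \Hom^{\cts}_{\cO[[\GL_2(\Zp)]]}\bigl(M_\infty,\,L^{\vee}\bigr)^{\vee}.
\]
The essential purely local input, where the $p$-adic Langlands correspondence and Colmez's functor enter, is that $M_\infty(L_{a,b,\tau})$ is scheme-theoretically supported on $\Spec R^{\square,\psi}(a,b,\tau,\rbar)[[x_1,\ldots,x_g]]$, is maximal Cohen--Macaulay on its support, and has a fixed generic rank $r$ that is independent of the choice of $(a,b,\tau)$. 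Since $M_\infty$ is projective over the relevant group algebra, the functor $L\mapsto M_\infty(L)$ is exact; applying it to a Jordan--H\"older filtration of $L_{a,b,\tau}\otimes_\cO\F$, whose graded pieces realize $(L_{a,b,\tau}\otimes_\cO\F)^{\semis}\simeq\bigoplus_{m,n}\sigma_{m,n}^{a_{m,n}}$, and then taking Hilbert--Samuel multiplicities using Lemma~\ref{lem:cycles in exact sequences} and Lemma~\ref{lem:multiplicities from cycles}, yields
\[
e\bigl(M_\infty(L_{a,b,\tau})/\pi\bigr) \;=\; \sum_{m,n} a_{m,n}\, e\bigl(M_\infty(\sigma_{m,n})\bigr),
\]
with the analogous identity in the crystalline case obtained by using $L^{\cris}_{a,b,\tau}$ throughout.

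To close the argument, the Cohen--Macaulayness of $M_\infty(L_{a,b,\tau})$ over $R^{\square,\psi}(a,b,\tau,\rbar)[[x_1,\ldots,x_g]]$, combined with formal smoothness in the auxiliary variables, forces $e\bigl(M_\infty(L_{a,b,\tau})/\pi\bigr) = r\cdot e\bigl(R^{\square,\psi}(a,b,\tau,\rbar)/\pi\bigr)$, so defining $\mu_{m,n}(\rbar) := \tfrac{1}{r}\,e\bigl(M_\infty(\sigma_{m,n})\bigr)$ extracts the conjectured formula. That $\mu_{m,n}(\rbar)$ depends only on $\rbar$ and on the abstract Serre weight $\sigma_{m,n}$, and not on the auxiliary global choices used to construct $M_\infty$, follows formally from the identity itself by specializing $(a,b,\tau)$ to enough pairs to separate Serre weights. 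The principal obstacle is the local-at-$p$ input that $M_\infty(L_{a,b,\tau})$ has the correct support, is maximal Cohen--Macaulay, and has generic rank independent of $(a,b,\tau)$: this is the deepest ingredient, depends essentially on the $p$-adic Langlands correspondence for $\GL_2(\Qp)$ and on a careful analysis of how Colmez's functor interacts with Kisin's potentially semistable and crystalline deformation rings, and is where the hypothesis on $\rbar$ (together with $\#\F>5$) is used to avoid degenerate cases.
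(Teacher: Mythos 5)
This statement is labelled a \emph{conjecture} in the paper, and the paper neither claims nor supplies an unconditional proof of it; the paper's actual theorem is the geometric refinement (Theorem \ref{thm: the refined GL2 conjecture holds}) proved under the extra hypothesis $\rbar\not\sim\begin{pmatrix}\omega\chi&*\\0&\chi\end{pmatrix}$, from which the numerical conjecture follows via Lemma \ref{lem:multiplicities from cycles}. Setting that aside and comparing routes: your proposal is genuinely different from the paper's argument. The paper follows Kisin's strategy from \cite{kisinfmc} very closely, patching, \emph{separately for each type} $(a,b,\tau)$, finite $\cO$-modules of algebraic modular forms with coefficients in $W_\sigma=\otimes_v L_{a_v,b_v,\tau_v}$ over rings $\Rbarinfty$ built from the potentially semistable lifting rings, and keeping track of a Jordan--H\"older filtration; the crucial inequality $e(\Rbarinfty/\pi)\le e(M_\infty/\pi M_\infty,\Rbarinfty/\pi)$ is imported verbatim from Kisin's \cite[Cor.~2.2.17]{kisinfmc}. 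You instead propose a single ``big'' patched module $M_\infty$ over $R^{\square,\psi}(\rbar)[[\vec x]]$ carrying a $\GL_2(\Zp)$-action, projective over the Iwasawa algebra, and then recover everything by applying the functor $L\mapsto M_\infty(L)=\Hom^{\cts}_{\cO[[\GL_2(\Zp)]]}(M_\infty,L^\vee)^\vee$. That is the Caraiani--Emerton--Gee--Geraghty--Pa\v{s}k\={u}nas--Shin / Pa\v{s}k\={u}nas style patching-functor architecture, which, where it applies, buys exactness and additivity of the functor for free and avoids re-running the patching argument for each type; what Kisin's architecture buys is that it needs much less about the interaction between the patched module and the $p$-adic local Langlands correspondence (only the inequality of multiplicities in Cor. 2.2.17), which is why the paper's cycle-level bookkeeping can be grafted directly onto his proof.

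Your outline is sound \emph{modulo} the step you yourself flag as the deepest: that $M_\infty(L_{a,b,\tau})$ is scheme-theoretically supported on $\Spec R^{\square,\psi}(a,b,\tau,\rbar)[[\vec x]]$, is maximal Cohen--Macaulay, and has generic rank $r$ independent of $(a,b,\tau)$. Two things deserve more care there. First, Cohen--Macaulayness alone does not force $e(M_\infty(L_{a,b,\tau})/\pi)=r\cdot e(R^{\square,\psi}(a,b,\tau,\rbar)/\pi)$: you also need the support to be \emph{all} of $\Spec R^{\square,\psi}(a,b,\tau,\rbar)[[\vec x]]$ (faithfulness) together with a uniform generic rank; in the paper this is exactly the content of conditions (1)--(4) of the patching lemma, and the nontrivial direction is the $\ge$ inequality that comes from $p$-adic local Langlands together with the weight part of Serre's conjecture (in the big-module approach it comes from projectivity over $S_\infty$ plus a flatness/density argument). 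Second, your $\mu_{m,n}(\rbar):=r^{-1}e\bigl(M_\infty(\sigma_{m,n})\bigr)$ \emph{a priori} depends on the globalization used to construct $M_\infty$; to conclude it depends only on $\rbar$ and $\sigma_{m,n}$ you should make explicit that the identity already forces $\mu_{m,n}(\rbar)=e\bigl(R_\cris^{\square,\psi}(\widetilde m,n,\triv,\rbar)/\pi\bigr)$ (taking $a=\widetilde m$, $b=n$, $\tau=\triv$ crystalline, as in Remark \ref{rem: mu is determined by low weight crystalline}), which is a purely local quantity. With those two points supplied, the proposal is a valid alternative derivation of the numerical conjecture under the same hypothesis on $\rbar$ as the paper uses.
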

\begin{remark}\label{rem: mu is determined by low weight crystalline} {\em By a straightforward twisting argument, the truth
    of the conjecture is independent of the choice of $\psi$, so we
    may assume that $\psi$ is crystalline. Note that if the conjecture
    is true for all $a$, $b$, $\tau$, then it is easy to see that
    $\mu_{m,n}(\rbar)=0$ unless
    $\det\rbar|_{I_{\Qp}}=\omega^{2m+n+1}$, and that if this holds
    then we must have
    $\mu_{m,n}(\rbar)=e(R_\cris^{\square,\psi}(\widetilde{m},n,\triv,\rbar)/\pi)$,
    where $\widetilde{m}$ is chosen so that
    $\psi|_{I_{\Qp}}=\varepsilon^{2\widetilde{m}+n+1}$.  So all the values
    $\mu_{m,n}(\rbar)$ are determined by the crystalline deformation
    rings in low weight.  }
\begin{remark}
  {\em Conjecture \ref{conj: original BM conjecture for GL2 Qp} was
    proved in \cite{kisinfmc} under the additional assumptions that $\rbar\not\sim
    \begin{pmatrix}
      \omega\chi&*\\0&\chi
    \end{pmatrix}$ for any $\chi$, and that if $\rbar$ has scalar
    semisimplification then $\rbar$ is scalar.}
\end{remark}
\end{remark}
We may now state our geometric version of the Breuil--M\'ezard conjecture.
\begin{conj}
  \label{conj: refined BM conjecture for GL2 Qp}For each $0\le m\le
  p-2$, $0\le n\le p-1$, there is a cycle $\cC_{m,n}$ depending only
  on $m$, $n$, and $\rbar$, such
  that for any $a$, $b$, $\tau$ with $\det\tau=\varepsilon^{-2a-b-1}\psi|_{I_\Qp}$, \[\cycle(R^{\square,\psi}(a,b,\tau,\rbar)/\pi)=\sum_{m,n}a_{m,n}\cC_{m,n},\]and \[\cycle(R_\cris^{\square,\psi}(a,b,\tau,\rbar)/\pi)=\sum_{m,n}a^\cris_{m,n}\cC_{m,n}.\] 
\end{conj}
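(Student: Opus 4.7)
The plan is to adapt Kisin's proof of Conjecture \ref{conj: original BM conjecture for GL2 Qp} so as to track cycles rather than just Hilbert--Samuel multiplicities, using the cycle-theoretic machinery developed in Section \ref{sec:cycles}. Following Remark \ref{rem: mu is determined by low weight crystalline}, I would first reduce the definition of $\cC_{m,n}$ to the low-weight crystalline case by setting
$$\cC_{m,n} := \cycle\bigl(\Spec R^{\square,\psi_{m,n}}_\cris(m,n,\triv,\rbar)/\pi\bigr)$$
for a crystalline character $\psi_{m,n}$ with $\psi_{m,n}|_{I_{\Qp}} = \varepsilon^{2m+n+1}$. A standard twisting argument will show this is independent of the allowed choices, and that $\cC_{m,n}=0$ unless $\det\rbar|_{I_{\Qp}} = \omega^{2m+n+1}$, matching the support of the constants $\mu_{m,n}$.

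Next I would globalize $\rbar$ via the potential modularity results of the appendix and run the Taylor--Wiles--Kisin patching argument of \cite{kisinfmc}. For each $\GL_2(\Zp)$-stable $\cO$-lattice $L$ inside some $\sigma(a,b,\tau)$ or $\sigma^\cris(a,b,\tau)$, this produces a patched module $M_\infty(L)$ over a patched ring $R_\infty$ which is a formal power series extension
$$R_\infty \cong R^{\square,\psi}_{(\cris)}(a,b,\tau,\rbar)[[x_1,\dots,x_h]].$$
The local $p$-adic Langlands input of \cite{kisinfmc} shows that $M_\infty(L)$ is a faithful, maximal Cohen--Macaulay $R_\infty$-module; the cycle-theoretic refinement needed is the equality
$$\cycle\bigl(M_\infty(L)/\pi\bigr) = \cycle\bigl(\Spec R^{\square,\psi}_{(\cris)}(a,b,\tau,\rbar)/\pi\bigr) \times_{\F} \cycle\bigl(\Spec \F[[x_1,\dots,x_h]]\bigr),$$
i.e.\ that $M_\infty(L)$ is generically free of rank one over each irreducible component of its support.

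Granting this, the conjecture follows by additivity. The reduction mod $\pi$ has semisimplification $\bigoplus \sigma_{m,n}^{a_{m,n}}$ (respectively with $a^\cris_{m,n}$), and since $L\mapsto M_\infty(L)$ is exact, Lemma \ref{lem:cycles in exact sequences} gives
$$\cycle\bigl(M_\infty(L)/\pi\bigr) = \sum_{m,n} a_{m,n}\,\cycle\bigl(M_\infty(L_{m,n})/\pi\bigr),$$
where $L_{m,n}$ is an $\cO$-lattice in $\sigma^\cris(m,n,\triv)$; the single Jordan--H\"older constituent of $L_{m,n}\otimes_\cO\F$ is $\sigma_{m,n}$, so the patched module $M_\infty(L_{m,n})$ picks out the cycle $\cC_{m,n}$ in the sense of the product formula above. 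Applying the cycle-product formula of Lemma \ref{lem:product cycles} to both sides and cancelling the common factor $\cycle(\Spec\F[[x_1,\dots,x_h]])$ yields the desired equality of cycles on $\Spec R^{\square,\psi}_{(\cris)}(a,b,\tau,\rbar)/\pi$.

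The hard part is the local statement in the second paragraph: one needs to know that $M_\infty(L)$ is free of rank one on each irreducible component of its support, not merely of the correct Hilbert--Samuel multiplicity at the closed point. This is where the hypothesis on $\rbar$ enters and where one has to work through Kisin's Colmez-functor analysis (or Pa\v{s}k\={u}nas's later reformulation) in cycle-theoretic terms, promoting the multiplicity calculation to a cycle-level statement. Everything else in the argument is a formal manipulation with the cycle machinery of Section~\ref{sec:cycles}.
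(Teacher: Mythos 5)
Your proposal follows the same broad strategy as the paper's proof of Theorem \ref{thm: the refined GL2 conjecture holds}: define $\cC_{m,n}$ via the low-weight crystalline lifting rings, globalize $\rbar$ by potential modularity, patch, use the $p$-adic Langlands input to force faithfulness and generic rank one, and conclude via additivity and the product formula for cycles. There are two places where your outline would need adjusting.

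First, the patched ring $\Rbarinfty$ is not a formal power series extension of a single copy of $R^{\square,\psi}_*(a,b,\tau,\rbar)$; it is the completed \emph{tensor product} $\widehat{\otimes}_{v|p,\cO}\,R^{\square,\psi}_*(a,b,\tau,\rbar)\,[[x_1,\dots,x_g]]$ over all places $v\mid p$ of the chosen totally real field $F$, and since $[F:\Q]$ must be even for the quaternion algebra to exist, there is always more than one such place. Thus after cancelling the free-variable factor via Lemma \ref{lem:product cycles} you arrive at an equality of the form $\prod_{v|p}\cycle(R_*/\pi)=\prod_{v|p}\sum_{m,n}a^*_{m,n}\cC_{m,n}$, from which the single-place identity must still be extracted, using the description of the minimal primes of a completed tensor product together with the non-negativity of cycle coefficients.

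Second, the paper does not establish the graded-piece cycle equalities directly from the $p$-adic Langlands correspondence. Rather, the weight part of Serre's conjecture (via~\cite{gee061}) gives full support of each $M^i_\infty/M^{i-1}_\infty$ on $\Spec\bar{R}^i_\infty$, and generic reducedness of $\bar{R}^i_\infty$ then yields only the \emph{inequality} $\cycle(M_\infty/\pi M_\infty)\ge\sum_i\cycle(\Spec\bar{R}^i_\infty/\pi)$. This inequality is upgraded to an equality by comparing Hilbert--Samuel multiplicities at the closed point, which are already known to agree by the numerical form of the Breuil--M\'ezard conjecture (\cite[Lem.~2.3.1]{kisinfmc}, \cite[Cor.~2.2.17]{kisinfmc}); a non-zero cycle has non-zero multiplicity at the closed point, so equality of multiplicities forces equality of cycles. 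Your outline instead proposes proving the cycle identities for each $M_\infty(L_{m,n})$ directly, which is possible in principle (it amounts to verifying the equivalent conditions of \cite[Lem.~2.2.11]{kisinfmc} separately for each low-weight crystalline type and then invoking Proposition~\ref{prop:cutting out}), but it is not obviously simpler, and in either case the two external inputs --- the $p$-adic Langlands argument and the weight part of Serre's conjecture --- are the same.
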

\begin{remark}
{\em
  Note again that if the conjecture is true for all choices of $a$, $b$, $\tau$
  then with the assumptions and notation of Remark~\ref{rem: mu is determined by low
    weight crystalline} we must
  have $\cC_{m,n}=\cycle(R_\cris^{\square,\psi}(\widetilde{m},n,\triv,\rbar)/\pi)$.
}
\end{remark}The following is our main result towards Conjecture
\ref{conj: refined BM conjecture for GL2 Qp}.
\begin{thm}\label{thm: the refined GL2 conjecture holds} If $\rbar\not\sim
    \begin{pmatrix}
      \omega\chi&*\\0&\chi
    \end{pmatrix}$ for any $\chi$, then Conjecture~{\em \ref{conj: refined
      BM conjecture for GL2 Qp}} holds for $\rbar$.
  \end{thm}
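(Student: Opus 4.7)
The plan is to refine the Taylor--Wiles--Kisin patching argument of \cite{kisinfmc} so as to track cycles rather than merely Hilbert--Samuel multiplicities. First, using the potential modularity results recorded in the appendix, I would globalize $\rbar$ to the local restriction at $p$ of the residual representation $\bar r: G_F \to \GL_2(\F)$ attached to some automorphic form on a totally definite quaternion algebra over a suitable totally real field $F$, arranged so as to satisfy the Taylor--Wiles hypotheses. The rigidity in the hypothesis on $\rbar$ (excluding twists of extensions of $\chi$ by $\omega\chi$) is exactly what makes the local deformation rings at $p$ of the requisite types integral domains (after base change) of the correct dimension, which is essential for what follows.

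Next, I would perform the Taylor--Wiles--Kisin patching construction, producing a patched module $M_\infty$ which is a finite module over $R_\infty := R_\mathrm{loc}^{\square,\psi}[[x_1,\dots,x_N]]$, where $R_\mathrm{loc}^{\square,\psi}$ is a completed tensor product (over $\cO$) of local lifting rings, and $M_\infty$ carries a compatible action of $\GL_2(\Z_p)$. For each pair $(a,b,\tau)$ and each $\GL_2(\Z_p)$-stable lattice $L\subset \sigma(a,b,\tau)$, set
\[
M_\infty(L) := \mathrm{Hom}^\cts_{\cO[[\GL_2(\Z_p)]]}\bigl(L, M_\infty^d\bigr)^d,
\]
which is a finite $R_\infty$-module whose schematic support is exactly $\Spec R_\infty(a,b,\tau,\bar\rho) := \Spec R_\infty\otimes_{R^{\square,\psi}(\rbar)}R^{\square,\psi}(a,b,\tau,\rbar)$. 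By Kisin's theorem (and the local--global compatibility at $p$), $M_\infty(L)$ is a faithful, maximal Cohen--Macaulay module over this patched deformation ring; in particular $M_\infty(L)/\pi$ is equidimensional of the correct dimension, so its associated cycle is well-defined and is nonzero precisely on the $5$-dimensional generic points of $\Spec R^{\square,\psi}(a,b,\tau,\rbar)/\pi$ (up to the auxiliary formal variables).

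With this setup in hand, the key point is an additivity property: the functor $L \mapsto M_\infty(L)$ is exact (as $L$ is free as an $\cO$-module), so any short exact sequence $0\to L'\to L\to L''\to 0$ of $\GL_2(\Z_p)$-stable lattices induces a short exact sequence $0\to M_\infty(L')/\pi \to M_\infty(L)/\pi \to M_\infty(L'')/\pi \to 0$. Since each of these three modules has support of dimension at most $\dim M_\infty(L)/\pi$, Lemma~\ref{lem:cycles in exact sequences} gives
\[
\cycle(M_\infty(L)/\pi) = \cycle(M_\infty(L')/\pi) + \cycle(M_\infty(L'')/\pi).
\]
Filtering any lattice by lattices whose reductions are irreducible and invoking the Jordan--Hölder constituents $\sigma_{m,n}^{a_{m,n}}$ of $(L\otimes \F)^\semis$, this reduces the computation of $\cycle(M_\infty(L)/\pi)$ to the case $L = \sigma_{m,n}$ (the patched version of the low weight crystalline case of Remark~\ref{rem: mu is determined by low weight crystalline}), and yields the identity
\[
\cycle(M_\infty(L)/\pi) = \sum_{m,n} a_{m,n}\, \cycle(M_\infty(\sigma_{m,n})/\pi)
\]
on $\Spec R_\infty/\pi$. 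Finally, I would descend this identity from the global patched ring $R_\infty$ to the local ring $R^{\square,\psi}(a,b,\tau,\rbar)/\pi$ by writing $R_\infty/\pi$ as a completed tensor product of $R^{\square,\psi}(a,b,\tau,\rbar)/\pi$ with the tensor product of the remaining local factors and the formal variables, and applying Lemma~\ref{lem:product cycles}; this extracts a cycle $\cC_{m,n}$ on $\Spec R^{\square,\psi}(\rbar)/\pi$ from $\cycle(M_\infty(\sigma_{m,n})/\pi)$ that depends only on $\rbar$ and $(m,n)$, and produces the desired identity.

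The main obstacle is the cycle-theoretic refinement, i.e.\ ensuring that the patched cycles actually descend to well-defined cycles on the local deformation ring independent of the globalization. This requires both the Cohen--Macaulayness of the auxiliary local lifting rings at primes other than $p$ (so that Lemma~\ref{lem:product cycles} can be applied cleanly) and the faithfulness of $M_\infty(\sigma_{m,n})$ as a module over $R_\infty(\widetilde{m},n,\triv,\rbar)$ (so that the cycle on the patched ring literally agrees with the cycle of the patched deformation ring, multiplied by the formal variables factor). Both are guaranteed in the setting of \cite{kisinfmc} by the hypothesis on $\rbar$; verifying these in the cycle-theoretic refinement is the heart of the argument, but no new Galois-theoretic input beyond that of Kisin is needed.
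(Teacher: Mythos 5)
Your proposal uses a genuinely different patching formalism than the paper. The paper (following \cite{kisinfmc} verbatim, just tracking cycles instead of multiplicities) patches a module for a \emph{fixed} weight $W_\sigma$, obtains a filtration of $M_\infty/\pi M_\infty$ matching a Jordan--H\"older filtration of $W_\sigma/\pi$, and uses a diagonalization argument to make the subquotients $M_\infty^i/M_\infty^{i-1}$ independent of the ambient choice of $(a_v,b_v,\tau_v)$. Your version patches once and for all and then forms $M_\infty(L) := \Hom_{\cO[[\GL_2(\Z_p)]]}(L, M_\infty^d)^d$, so exactness of the functor $L\mapsto M_\infty(L)$ replaces the diagonalization step. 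This is the cleaner and now-standard formalism, and it is a legitimate route; the exactness renders the cycle additivity (via Lemma~\ref{lem:cycles in exact sequences}) immediate, which is genuinely nicer than tracking filtrations across weights.

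However, your sketch glosses over the point where the argument is actually delicate, and it is precisely where the hypothesis on $\rbar$ enters. You assert that ``$M_\infty(L)$ is a faithful, maximal Cohen--Macaulay module over the patched deformation ring'' by invoking ``Kisin's theorem''; and you then immediately extract from $M_\infty(\sigma_{m,n})$ the cycle $\cC_{m,n}$ defined as $\cycle(R_{\cris}^{\square,\psi_\cris}(\widetilde m,n,\triv,\rbar)/\pi)$. But faithfulness of $M_\infty(L)$ for $L$ a characteristic-zero lattice does \emph{not} by itself tell you that $\cycle(M_\infty(\sigma_{m,n})/\pi)$ equals the cycle of the corresponding patched crystalline ring mod $\pi$: for that you need both (a) that $M_\infty(\sigma_{m,n})$ is supported on \emph{every} irreducible component of that ring (this is the weight part of Serre's conjecture, which the paper cites from the proof of \cite[Prop.~2.2.15]{kisinfmc} and from~\cite{gee061}), and (b) that the ring is generically reduced and $M_\infty(\sigma_{m,n})$ is generically free of rank $1$ over it. In the paper neither is proved directly at the Serre-weight level. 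Instead the paper only establishes the one-sided \emph{inequality} $\cycle(M_\infty^i/M_\infty^{i-1}) \geq \cycle(\Spec\Rbar^i_\infty)$ from the support statement and generic reducedness, and then closes the argument by a bootstrap: Lemma~\ref{lem:multiplicities from cycles} passes the cycle inequality to a multiplicity inequality at the closed point, which is in fact an equality by \cite[Lem.~2.3.1]{kisinfmc} (the $p$-adic Langlands input, again via~\cite[Cor.~2.2.17]{kisinfmc}), and since any nonzero effective cycle has nonzero multiplicity at the unique closed point, the cycle inequality must have been an equality all along. This upgrade from inequality to equality using positivity at the closed point is the real new idea over and above Kisin's numerical proof, and your proposal omits it. If you try to avoid it by claiming the cycle equality at each Serre-weight piece directly, you would need to prove an $R=\T$-type theorem for each crystalline lifting ring in low weight, which is exactly what one does \emph{not} know a priori in this setting.

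\end{document}
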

\subsection{The proof of Theorem~\ref{thm: the refined GL2 conjecture holds} via patching}
\label{subsec: patching for GL2 Qp}
In this subsection we present the proof of Theorem~\ref{thm: the refined GL2 conjecture holds}.
To this end,
we fix a continuous representation $\rbar:G_\Qp\to\GL_2(\F)$. We begin by realising this representation as the restriction of a
global Galois representation, by a similar (but simpler) argument to that of
Appendix A of \cite{geekisin}.
\begin{prop}
  \label{prop:mod p local representation realised globally}There is a
  totally real field $F$ and a continuous irreducible representation
  $\rhobar:G_F\to\GL_2(\F)$ such that
  \begin{enumerate}
  \item $p$ splits completely in $F$;
  \item $\rhobar$ is totally odd;
  \item $\rhobar(G_F)=\GL_2(\F)$;
  \item if $v\nmid p$ is a place of $F$ then $\rhobar|_{G_{F_v}}$ is unramified;
  \item if $v|p$ is a place of $F$ then $\rhobar|_{G_{F_v}}\cong\rbar$;
  \item $[F:\Q]$ is even;
  \item $\rhobar$ is modular.
  \end{enumerate}
\end{prop}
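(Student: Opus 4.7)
The plan is to follow the strategy of Appendix~A of \cite{geekisin}, which in the two-dimensional setting simplifies considerably; one can work directly with Hilbert modular forms rather than unitary Shimura varieties.

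First I would construct an auxiliary characteristic-zero potentially crystalline lift $r\colon G_{\Q_p}\to\GL_2(\Zpbar)$ of $\rbar$ with small Hodge--Tate weights; such a lift always exists by an explicit construction of a weakly admissible filtered $\varphi$-module. Using a Moret--Bailly argument applied to a moduli space of Hilbert modular varieties (varying the totally real base field), together with an automorphy lifting theorem to conclude that the resulting global representation is automorphic, I would then realise $r$ as the restriction, at one distinguished place $v_0\mid p$, of the $p$-adic representation $\rho_0\colon G_{F_0}\to\GL_2(\Zpbar)$ attached to a Hilbert modular form over some totally real field $F_0$. The field $F_0$ can be arranged so that $p$ splits completely in it, and so that the mod $p$ reduction $\rhobar_0$ of $\rho_0$ has image exactly $\GL_2(\F)$; this last point uses the hypothesis $\#\F>5$ to pin down the image via the simplicity of $\PSL_2(\F)$, for instance by an auxiliary prime argument that forces the image to contain an appropriate regular semisimple element. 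By construction $\rhobar_0$ is modular, totally odd, and satisfies $\rhobar_0|_{G_{F_{0,v_0}}}\cong\rbar$.

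The remaining conditions are arranged by a sequence of solvable totally real base changes $F/F_0$. Since $p$ already splits completely in $F_0$, I would choose $F$ so that $p$ still splits completely in $F$ and so that every place of $F$ above $p$ restricts to the distinguished place $v_0$ of $F_0$ (which can be arranged by making $F/F_0$ sufficiently inert above the other $p$-adic places of $F_0$, using class field theory); this gives condition (5), since all decomposition groups at $p$-adic places of $F$ then coincide with $G_{F_{0,v_0}} = G_{\Q_p}$. Ramification at a prime $\ell\neq p$ is killed by passing to a further solvable extension, since the image of inertia in $\rhobar_0$ at such an $\ell$ factors through a finite quotient of order prime to $\ell$. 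The parity condition $[F:\Q]$ even is arranged by adjoining a real quadratic field, and all base change steps can be performed so that $F$ is linearly disjoint from the fixed field of $\ker\rhobar_0$, thereby preserving $\rhobar(G_F)=\GL_2(\F)$; modularity and oddness are preserved by Langlands--Arthur--Clozel solvable cyclic descent.

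The hard part is the initial construction of $\rhobar_0$: the prescribed local behaviour $\rbar$ at $v_0$, the full image $\GL_2(\F)$, the modularity, and the splitting of $p$ in $F_0$ must all be arranged simultaneously. This is precisely where one invokes the potential-modularity/Moret--Bailly machinery of \cite{geekisin}; once $\rhobar_0$ is in hand, the base-change manipulations are largely formal, and the simpler two-dimensional setting allows one to avoid the more elaborate unitary-group constructions needed in the $n$-dimensional case treated in the appendix to the present paper.
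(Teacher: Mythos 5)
Your overall strategy (potential modularity via a Moret--Bailly argument, then cleanup by solvable base change) is in the spirit of Appendix~A of \cite{geekisin}, but the paper itself takes a more direct route: it invokes Proposition~3.2 of \cite{frankII} to produce $F$ and $\rhobar$ satisfying conditions (1)--(5) all at once (in particular, with $\rhobar|_{G_{F_v}}\cong\rbar$ imposed simultaneously at \emph{every} place $v\mid p$ of $F$), then deduces modularity from Proposition~8.2.1 of \cite{0905.4266} over a further extension in which $p$ stays split, and finally takes a quadratic extension to ensure $[F:\Q]$ is even. There is no characteristic-zero lift constructed at the outset, and no Hilbert-modular Moret--Bailly step to re-derive.

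More importantly, your base change step contains a genuine error. You construct $\rho_0$ over $F_0$ with the prescribed local behaviour only at a \emph{single} distinguished place $v_0\mid p$, and then propose to pass to $F/F_0$ so that every place of $F$ above $p$ ``restricts to $v_0$,'' arranged by making $F/F_0$ ``sufficiently inert above the other $p$-adic places of $F_0$.'' This cannot work: if a place of $F$ lies above $v_1\neq v_0$ then it restricts to $v_1$ regardless of the ramification/inertia behaviour in $F/F_0$; inertness does not make those places disappear or relocate. Worse, making $F/F_0$ inert above $v_1$ forces $[F_w:\Q_p]>1$ for the unique $w\mid v_1$, which directly violates condition~(1) that $p$ split completely in $F$. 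There is simply no base change that turns a representation with prescribed behaviour at one $p$-adic place into one with prescribed behaviour at all $p$-adic places of a bigger field while keeping $p$ split. The fix is exactly what the paper (and the genuine \cite{geekisin} argument) does: impose the local condition $\rhobar|_{G_{F_v}}\cong\rbar$ at all $p$-adic places simultaneously in the initial globalisation step, which is precisely what Proposition~3.2 of \cite{frankII} provides. With that step fixed, your remaining manipulations (killing ramification away from $p$, adjusting parity, preserving big image by linear disjointness, and preserving modularity by solvable descent) are fine.

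Your remark about arranging the image to be exactly $\GL_2(\F)$ by an ``auxiliary prime argument'' is also too vague to carry the weight put on it; again this is handled cleanly by \cite{frankII}, which produces a globalisation with image exactly the prescribed group $G=\GL_2(\F)$ from the start.
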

\begin{proof}
  By Proposition 3.2 of \cite{frankII}, we may find $F$ and $\rhobar$
  satisfying all but the last two conditions. By Proposition 8.2.1 of
  \cite{0905.4266}, there is a finite Galois extension $F'/F$ in which all places above $p$ split completely such that $\rhobar|_{G_{F'}}$
  is modular. If we make a further quadratic extension (linearly disjoint from
  $\overline{F}^{\ker\rhobar}$ over $F$, and in which the primes above $p$ split completely),
  if necessary,
  to ensure that $[F':\Q]$ is even, and replace $F$ with $F'$, then the result follows.
\end{proof}

For the remainder of the subsection, we follow the arguments of \cite[\S 2.2]{kisinfmc}
very closely, and we do our best to conform to the notation used there.

We choose a finite set  $S$ of finite places of $F$, containing all the
places $v|p$ and at least one other place. Using 
\cite[Lem.~2.2.1]{kisinfmc}, we can and do choose $S$ so that conditions (1)--(4)
of \cite[\S 2.2]{kisinfmc} hold.

We denote by $D$ the quaternion
algebra with centre $F$ which is ramified at all infinite places of
$F$ and unramified at all finite places (so the set $\Sigma$
considered in \cite[\S 2.1.1]{kisinfmc} is empty). We fix a
maximal order $\cO_D$ of $D$, for each finite place $v$ we choose an
isomorphism $(\cO_D)_v\isoto M_2(\cO_{F_v})$, and we define  $U=\prod_v
U_v\subset(D\otimes_F\A_F^{\infty})^\times$ to be the following compact open subgroup
of $\prod_v(\cO_D)_v^\times$: if $v|p$ or $v\notin
S$ then $U_v=(\cO_D)^\times_v$, while if $v\in S$ but $v\nmid p$, then
$U_v$ consists of the matrices which are upper-triangular and
unipotent modulo $\varpi_v$, where $\varpi_v$ is a uniformiser of
$F_v$. The subgroup $U$ is sufficiently small in the sense of~\cite[\S 2.1.1]{kisinfmc}.

We write $\Sigma_p$ for the set of places $v|p$ of $F$ (since $\Sigma$ is
empty, this is consistent with \cite{kisinfmc}). For each $v|p$, we
let $R_v^{\square,\psi}(\rhobar|_{G_{F_v}})$ denote the universal framed deformation
$\cO$-algebra for $\rhobar|_{G_{F_v}}$ with determinant
$\psi\varepsilon$, and we define
\[\Rinfty:=\widehat{\otimes}_{v|p,\cO}R_v^{\square,\psi}(\rhobar|_{G_{F_v}})[[x_1,\dots,x_g]],\] 
where $x_1,\dots,x_g$ are formal variables, and the integer $g$ is chosen as
in~\cite[Prop.~2.2.4]{kisinfmc}.

For each place $v|p$ of $F$, we choose integers $a_v$, $b_v$ with
$b_v\ge~0$, together with an inertial type~$\tau_v$, and let $*$ be
either $\cris$ or nothing (the same choice of $*$ being made for all
$v|p$). We assume that
$\det\tau_v=\varepsilon^{1-2a_v-b_v}\psi|_{I_\Qp}$ for each~$v$. We
write\footnote{The
ring $\Rbarinfty$ depends upon 
the particular choices of $a_v$, $b_v$, $\tau_v$ and $*$, although (following \cite{kisinfmc}) we 
do not indicate this in the notation.} \[\Rbarinfty :=
\widehat{\otimes}_{v|p,\cO}R^{\square,\psi}_*(a_v,b_v,\tau_v,\rhobar|_{G_{F_v}})[[x_1,\ldots,x_g]].\]

If we write $W_\sigma:=\otimes_{v|p}L^*_{a_v,b_v,\tau_v}$, then $W_{\sigma}$ is
a finite free
$\cO$-module with an action of $\prod_{v|p}\GL_2(\cO_{F_v})$,
and the quotient $W_{\sigma}/\pi W_{\sigma}$ is then a representation 
of $\prod_{v|p}\GL_2(\cO_{F_v})$ over $\F$.
Fix a
Jordan--H\"older filtration
\[0=L_0\subset\dots\subset L_s =
W_{\sigma}/\pi W_{\sigma} \]
of $W_{\sigma}/\pi W_{\sigma}$
by $\prod_{v|p} \GL_2(\cO_{F_v})$-subrepresentations.
If we write $\sigma_i := L_i/L_{i-1}$,
then 
$\sigma_i\cong\otimes_{v|p}\det^{m_{v,i}}\otimes \Sym^{n_{v,i}}\F^2$
for some uniquely determined integers $m_{v,i} \in \{0, \ldots, p-2\}$ and
$n_{v,i}\in \{0,\ldots,p-1\}$.

The patching construction of
\cite[\S~2.2.5]{kisinfmc} then gives, for some integer denoted by $h+j$
in \cite{kisinfmc}, and formal variables $y_1,\dots,y_{h+j}$,
\begin{itemize}
\item an $(\Rbarinfty,\cO[[y_1,\dots,y_{h+j}]])$-bimodule
  $M_\infty$, finite free over $\cO[[y_1,\dots,y_{h+j}]]$, and
\item a filtration of $M_{\infty}/\pi M_{\infty}$ by $(\Rbarinfty/\pi,\F[[y_1,\dots,y_{h+j}]])$-bimodules,
say \[0=M_\infty^0\subset M_\infty^1\subset\dots\subset
  M_\infty^s=M_\infty/\pi M_{\infty},\] such that each
  $M^i_\infty/M^{i-1}_\infty$ is a finite free
  $\F[[y_1,\dots,y_{h+j}]]$-module, and such that
\item the isomorphism class of $M^i_\infty/M^{i-1}_\infty$ as an
  $(\Rinfty,\F[[y_1,\dots,y_{h+j}]])$-bimodule depends only on
  the isomorphism class of $\sigma_i$ as a
  $\prod_{v|p}\GL_2(\cO_{F_v})$-module, and not on  the choices of
  $a_v$, $b_v$ and $\tau_v$. (It
  is immediate from the finiteness argument used in patching that this
  can be achieved for any finite collection of tuples $(a_v,b_v,\tau_v)$, and
  since there are only countably many tuples, a diagonalization
  argument allows us to assume independence for all tuples.)
\end{itemize}

For any Serre weight $\sigma_{m,n}$, we now write $\psi_\cris$ for a
crystalline character lifting $\omega^{-1}\det\rbar$, and we
define (in the notation of Remark~\ref{rem: mu is determined by low weight crystalline}) \[\mu_{m,n}(\rbar) := e(R_\cris^{\square,\psi_\cris}(\widetilde{m},n,\triv,\rbar)/\pi)\]
and \[\cC_{m,n} := \cycle(R_\cris^{\square,\psi_\cris}(\widetilde{m},n,\triv,\rbar)/\pi).\]

\medskip

\noindent{{\em Proof of Theorem}~\ref{thm: the refined GL2 conjecture
    holds}.} Fix $a$, $b$, $\tau$ and $*$, and set $a_v=a$, $b_v=b$,
$\tau_v=\tau$ for each $v|p$. Lemma~2.2.11 of
\cite{kisinfmc} (together with its proof) shows that $e(\Rbarinfty/\pi)\ge e(M_\infty/\pi
    M_\infty,\Rbarinfty/\pi)$, and the following conditions are equivalent.
  \begin{enumerate}
  \item $M_\infty$ is a faithful $\Rbarinfty$-module.
  \item $M_\infty$ is a faithful $\Rbarinfty$-module which has rank
    $1$ at all generic points of $\Rbarinfty$.\footnote{We remark that
there is a typo in this part of the statement of \cite[Lem.~2.2.11]{kisinfmc};
$R_{\infty}$ is written there, rather than $\Rbar_{\infty}$.}
  \item $e(\Rbarinfty/\pi)=e(M_\infty/\pi
    M_\infty,\Rbarinfty/\pi)$.
\item $e(\Rbarinfty/\pi)\le e(M_\infty/\pi
    M_\infty,\Rbarinfty/\pi)$.
  \end{enumerate}
(Note that the proof of \cite[Lem.~2.2.11]{kisinfmc}, as written,
literally applies only when $a = 0$,
but in fact it goes through unchanged for any value of $a$.)
Furthermore, the argument of Corollary
2.2.17 of \cite{kisinfmc} (which uses the $p$-adic Langlands
correspondence for $\GL_2(\Qp)$, together with the weight part of Serre's conjecture)
establishes the inequality $e\bigl(\Rbarinfty/\pi)\le e(M_\infty/\pi
M_\infty,\Rbarinfty/\pi\bigr)$, so that in fact all of the conditions~(1)--(4) actually hold.  (This is where our hypothesis regarding $\rbar$ is used.)

 Let
  $\bar{R}_\infty^i:=
\bigl(\cotimes_{v|p}R_\cris^{\square,\psi_\cris}(\widetilde{m}_{v,i},n_{v,i},\triv,\rhobar|_{G_{F_v}})/\pi\bigr)
[[x_1,\ldots,x_g]],$
regarded as a quotient of $\Rinfty$. Since the isomorphism class of
  $M^i_\infty/M^{i-1}_\infty$ as an $\Rinfty$-module depends only
  on the isomorphism class of $\sigma_i$, one sees as in the proof of
  Lemma 2.2.13 of \cite{kisinfmc} that the action of $\Rinfty$ on
  $M^i_\infty/M^{i-1}_\infty$ factors through $\bar{R}_\infty^i$. (In
  brief: it is enough to check for each place $v_0|p$ that the action
  of $R_{v_0}^{\square,\psi}/\pi$ factors through
  $R_\cris^{\square,\psi_\cris}(\widetilde{m}_{v_0,i},n_{v_0,i},\triv,\rhobar|_{G_{F_{v_0}}})/\pi$. In
  order to do this, one applies the construction with $a_v=\widetilde{m}_{v_0,i}$,
  $b_v=n_{v_0,i}$, $\tau_{v_0}$ an appropriate scalar type, and the other $\tau_v$ chosen so that
  $\sigma_{m_{v,i},n_{v,i}}$ is a Jordan--H\"older factor of
  $L_{a_v,b_v,\tau_v}\otimes_\cO\F$ for $v\ne v_0$.)
Furthermore, the $\Rinfty$-module $M^i_\infty/M^{i-1}_\infty$ is
supported on all of $\Spec \bar{R}_\infty^i$, by the results of Section 4.6 of
\cite{gee061} (cf.\ the final paragraph of the proof of~\cite[Prop.~2.2.15]{kisinfmc}). 
Finally, we note that $\bar{R}_\infty^i$ is generically
reduced (see the proof of~\cite[Prop.~2.2.15]{kisinfmc}).

Lemma~\ref{lem:product cycles} shows that
\ssinc
\begin{equation}
\label{eqn:cycle equality}
\cycle(\Rbarinfty/\pi)=\Bigl(\prod_{v|p}\cycle(R_*^{\square,\psi}(a,b,\tau,\rbar)/\pi)\Bigr)
\times \cycle(\Spec \F[[x_1,\ldots,x_g]]),
\end{equation}
while,
identifying each of the modules $M_{\infty}/\pi M_{\infty}$, $M^i_{\infty}/M^{i-1}_\infty$, etc., with
the corresponding sheaves on $\Spec \Rinfty/\pi$ that they give rise to,
we compute that
\ssinc
\begin{multline}
\label{eqn:cycle inequality}
\cycle(M_\infty/\pi M_\infty)=
\sum_i \cycle(M^i_\infty/M^{i-1}_\infty) \\
\ge\sum_i \cycle(\Spec \Rbar^i_\infty/\pi)
=\Bigl(\prod_{v|p}\sum_{m,n} a^*_{m,n}\cC_{m,n}\Bigr)\times \cycle(\Spec \F[[x_1,\ldots,x_g]]),
\end{multline}
the first equality following 
by Lemma~\ref{lem:cycles in exact sequences},
the inequality following from the fact, noted above, that the support of
$M^i_{\infty}/M^{i-1}_{\infty}$ coincides with the generically reduced
closed subscheme $\Spec \bar{R}_\infty^i$ of $\Spec \Rinfty/\pi,$
and the second equality following from another application of Lemma~\ref{lem:product cycles}.
Also, since $M_{\infty}$ is $\pi$-torsion free,
and generically free of rank one
over each component of $\Spec \Rbarinfty$ (condition~(2) above),
Proposition~\ref{prop:cutting out} shows that
\ssinc
\begin{equation}
\label{eqn:another cycle equality}
\cycle(\Rbarinfty/\pi)  = \cycle(M_\infty/\pi M_\infty).
\end{equation}

Putting these computations together, we find that 
\begin{multline*}
\Bigl(\prod_{v|p}\cycle(R_*^{\square,\psi}(a,b,\tau,\rbar)/\pi)\Bigr)
\times \cycle(\Spec \F[[x_1,\ldots,x_g]])  \buildrel\text{(\ref{eqn:cycle equality})} \over =
\cycle(\Rbarinfty/\pi) \\
\buildrel \text{(\ref{eqn:another cycle equality})} \over =
\cycle(M_\infty/\pi M_\infty,\Rbarinfty/\pi) 
\buildrel \text{(\ref{eqn:cycle inequality})} \over \ge
\Bigl(\prod_{v|p}\sum_{m,n}a^*_{m,n}\cC_{m,n}\Bigr) \times \cycle(\Spec \F[[x_1,\ldots,x_g]]).
\end{multline*}
However, if we apply Lemma \ref{lem:multiplicities from cycles} (with
$y$ the closed point of $\Spec\Rbarinfty/\pi$) to pass to
the corresponding multiplicities, then this inequality on cycles gives
a corresponding inequality on multiplicities, which is in fact an
equality, by Lemma 2.3.1 of~\cite{kisinfmc}.  Thus this inequality of cycles is
an equality (note that any non-zero cycle must have non-zero
multiplicity at the unique closed point of $\Spec\Rbarinfty/\pi$), and
we deduce that
\[\prod_{v|p}\cycle(R_*^{\square,\psi}(a,b,\tau,\rbar)/\pi)=
\prod_{v|p}\sum_{m,n}a^*_{m,n}\cC_{m,n},\]
and thus that
\[\cycle(R_*^{\square,\psi}(a,b,\tau,\rbar)/\pi)=
\sum_{m,n}a^*_{m,n}\cC_{m,n},\]
as required.
\qed
\begin{remark}\label{rem: why we don't patch in the 1 by omega case}
  {\em Even if $\rbar\sim
    \begin{pmatrix}
      \omega\chi&*\\0&\chi
    \end{pmatrix}$ for some $\chi$, it is presumably possible to use
    the above arguments to show that our geometric Breuil--M\'ezard
    conjecture is equivalent to the usual one, and that both are
    equivalent to the equivalent conditions of Lemma 2.2.11 of
    \cite{kisinfmc}. We have not done so, because parity issues with
    Hilbert modular forms make the argument rather longer than one
    would wish, and in any case we prove a similar statement in far
    greater generality in Section \ref{sec: patching for
  GLn} (see Theorem \ref{thm: purely local statement assuming that for Serre weights,
    and in addition the statement of the support for lambda and
    tau}, which together with Lemma \ref{unrestricted determinant lifting ring is smooth over the
    fixed determinant ring} shows the equivalence of our geometric
  conjecture with the usual one). (Note that since $p>2$ all the Serre weights occurring in
  the reduction mod $p$ of $L_{a,b,\tau}$ have the same parity, so it
  is possible to circumvent parity problems by twisting, but the
  details are a little unpleasant to write out.)}
\end{remark}

\subsection{Analysis of components}\label{subsec: components for GL2
  Qp}For a given $\rbar$, there may be several different Serre weights
$\sigma_{m,n}$ for which $\mu_{m,n}(\rbar)\ne 0$. We now examine the
different cycles $\cC_{m,n}$. 

We write $W(\rbar)$ for
the set of $\sigma_{m,n}$ for which $\mu_{m,n}(\rbar)\ne 0$ (that is:
the set of Serre weights for $\rbar$).
The only cases where $W(\rbar)$ contains more than one element are
as follows (throughout it is understood that we always impose the 
conditions that $0\leq m \leq p-2$ and $0 \leq n \leq p-1$):
\begin{itemize}
\item If $\rbar$ is irreducible, say 
$$\rbar|_{I_\Qp}\sim\omega^m\otimes
  \begin{pmatrix}
    \omega_2^{n+1}&0\\0&\omega_2^{p(n+1)}
  \end{pmatrix},$$
then $W(\rbar)=\{\sigma_{m,n},\sigma_{m+n,p-1-n}\}$.
\item If $\rbar$ is reducible but indecomposable, with  
  $$\rbar|_{I_\Qp}\sim 
  \begin{pmatrix}\omega^{m+1}&*\\0&\omega^m
  \end{pmatrix}$$
a peu ramifi\'ee extension, then $W(\rbar)=\{\sigma_{m,0},\sigma_{m,p-1}\}$.
\item If $\rbar$ is reducible and decomposable, with
  $$\rbar|_{I_\Qp}\sim 
  \begin{pmatrix}\omega^{m+n+1}&0\\0&\omega^m
  \end{pmatrix}$$ such that $n\le p-3$, then
  \begin{itemize}
  \item if $0<n<p-3$, we have $W(\rbar)=\{\sigma_{m,n},\sigma_{m+n+1,p-3-n}\}$;
  \item if $n=p-3$ and $p>3$,
we have $W(\rbar)=\{\sigma_{m,p-3},\sigma_{m-1,0},\sigma_{m-1,p-1}\}$;
  \item if $p=3$ and $n=0$, we have
$W(\rbar)=\{\sigma_{m,0},\sigma_{m,2},\sigma_{m+1,0},\sigma_{m+1,2}\}$.
  \end{itemize}
\end{itemize}
The relationships between the cycles $\cC_{m,n}$ are as follows. Note
in particular that by Theorem \ref{thm: the refined GL2 conjecture
  holds}, if  $\rbar\not\sim
    \begin{pmatrix}
      \omega\chi&*\\0&\chi
    \end{pmatrix}$ for any $\chi$, then the assumption in the
    following Proposition is automatic.
\begin{prop}
  \label{prop:relationship between cycles for GL2 Qp} Assume that
  Conjecture~{\em \ref{conj: refined BM conjecture for GL2 Qp}} holds for
  $\rbar$. Then the cycles $\cC_{m,n}$ have disjoint support, except
  for the cycles $\cC_{m,0}$ and $\cC_{m,p-1}$ when both are
  non-zero. In this latter case there is an equality
  $\cC_{m,0}=\cC_{m,p-1}$, except if $\rbar$ is a twist of a {\em (}possibly split{\em )}
peu ramifi\'ee extension of the trivial character by the mod $p$
cyclotomic character, 
in which case
  $\cC_{m,p-1}$ is the sum of two irreducible cycles, one of
  which is~$\cC_{m,0}$.
\end{prop}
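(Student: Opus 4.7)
The plan is to exploit the identification $\cC_{m,n} = \cycle\bigl(R_\cris^{\square,\psi_\cris}(\widetilde m, n, \triv, \rbar)/\pi\bigr)$ furnished by Conjecture~\ref{conj: refined BM conjecture for GL2 Qp} (granted as hypothesis), and then work case-by-case against the explicit list of Serre weight patterns recalled just before the proposition (irreducible $\rbar$; reducible peu-ramifi\'ee $\rbar$; and the three reducible decomposable subcases). In each case I would match the generic points of $\cC_{m,n}$ with equivalence classes of generic crystalline lifts of $\rbar$ of Hodge--Tate weights $(\widetilde m, \widetilde m + n + 1)$, and compare these loci across different Serre weights of $\rbar$.

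For a pair $(\sigma_{m,n},\sigma_{m',n'})$ of distinct Serre weights of $\rbar$ that is \emph{not} of the exceptional form $(\sigma_{m,0},\sigma_{m,p-1})$, inspection of the list shows that both $n, n' \le p-2$ (so that we lie firmly within the Fontaine--Laffaille range) and that the weights differ in the gap of the Hodge filtration of the corresponding Fontaine--Laffaille modules. The reduction mod $p$ of a generic crystalline lift with weights $(\widetilde m, \widetilde m+n+1)$ remembers this gap, so the generic points of $\cC_{m,n}$ and $\cC_{m',n'}$ are distinct, establishing the disjoint support assertion in these cases.

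Second, for the exceptional pair $(\sigma_{m,0},\sigma_{m,p-1})$, the key point is that the Hodge--Tate weight ranges $(\widetilde m, \widetilde m + 1)$ and $(\widetilde m, \widetilde m + p)$ are compatible modulo $p-1$, so that reductions can coincide: a crystalline lift with weights $(\widetilde m, \widetilde m + p)$ whose associated Breuil module is of ``Barsotti--Tate type'' has the same reduction as the generic crystalline lift with weights $(\widetilde m, \widetilde m + 1)$. Making this precise (via the explicit Breuil--Kisin module descriptions of these deformation rings from \cite{kisindefrings}) yields a closed immersion
\[
\Spec R_\cris^{\square,\psi_\cris}(\widetilde m, 0, \triv, \rbar)/\pi \hookrightarrow \Spec R_\cris^{\square,\psi_\cris}(\widetilde m, p-1, \triv, \rbar)/\pi,
\]
which identifies $\cC_{m,0}$ as a sub-cycle of $\cC_{m,p-1}$.

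The remaining step is to decide whether this inclusion is an equality or whether $\cC_{m,p-1}$ picks up a further irreducible component. By the explicit Breuil module analysis of the weight $(\widetilde m, \widetilde m + p)$ crystalline deformation ring, an additional component exists exactly when the Breuil module admits a non-Barsotti--Tate filtration, which occurs precisely when $\rbar$ is a twist of a (possibly split) peu-ramifi\'ee extension of the trivial character by $\omega$; in every other case the ring is irreducible (and reduced), forcing $\cC_{m,p-1}=\cC_{m,0}$. The main technical obstacle is this Breuil module computation for $n=p-1$, which lies just outside the Fontaine--Laffaille range; however, in the only cases where both $\cC_{m,0}$ and $\cC_{m,p-1}$ are non-zero the relevant $\rbar$ is amenable to the analysis developed in \cite{kisindefrings} and \cite{kisinfmc}, so this can be carried out by direct calculation.
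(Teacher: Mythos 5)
Your disjointness argument rests on the structural claim that every non-exceptional pair $(\sigma_{m,n},\sigma_{m',n'})$ of distinct Serre weights of $\rbar$ has both $n,n'\le p-2$, and that claim is false. When $\rbar$ is a split direct sum with $\rbar|_{I_\Qp}\sim\omega^{m+p-2}\oplus\omega^m$ (the decomposable case with $n=p-3$, $p>3$), the paper's list gives $W(\rbar)=\{\sigma_{m,p-3},\sigma_{m-1,0},\sigma_{m-1,p-1}\}$, and the pair $(\sigma_{m,p-3},\sigma_{m-1,p-1})$ is \emph{not} of the exceptional form $(\sigma_{m,0},\sigma_{m,p-1})$ but has $n'=p-1$. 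Likewise for $p=3$: with $W(\rbar)=\{\sigma_{m,0},\sigma_{m,2},\sigma_{m+1,0},\sigma_{m+1,2}\}$, several non-exceptional pairs involve $n=p-1=2$. For these the Fontaine--Laffaille comparison does not apply even to one member of the pair, so your first step does not establish disjointness in precisely the cases where the claim is most delicate. The paper's proof does not try to treat these uniformly by Fontaine--Laffaille; instead it first observes, using the assumed Conjecture~\ref{conj: refined BM conjecture for GL2 Qp} together with Henniart's computation that $\sigma(\triv)=\St$ has mod $p$ reduction $\sigma_{p-1}$, that $\cC_{m,p-1}=\cycle(R^{\square,\psi_\cris}(\widetilde m,0,\triv,\rbar)/\pi)$, the reduction of the \emph{semistable} ring in Hodge--Tate weights $(0,1)$; it then handles the irreducible case with $0<n<p-1$ by quoting Savitt's explicit computation (Theorem~6.22 of \cite{MR2137952}), and the split reducible case by describing the relevant crystalline rings as ordinary lifting rings and distinguishing the cycles by which character they lift.

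For the exceptional pair, your proposed closed immersion $\Spec R_\cris^{\square,\psi_\cris}(\widetilde m,0,\triv,\rbar)/\pi\hookrightarrow\Spec R_\cris^{\square,\psi_\cris}(\widetilde m,p-1,\triv,\rbar)/\pi$ between crystalline rings at Hodge--Tate weight gaps $1$ and $p$, and the criterion that an extra component arises iff the Breuil module admits a non-Barsotti--Tate filtration, are not results you can cite; they are substantial claims of integral $p$-adic Hodge theory, and weight $p$ sits outside the regime where the available Breuil/Breuil--Kisin module analyses of \cite{kisindefrings} give such clean descriptions. The paper avoids all of this by using the conjectural hypothesis to replace the weight-$(0,p)$ crystalline ring with the weight-$(0,1)$ semistable ring, so that $\Spec R_\cris\subset\Spec R_\st$ is a union of components for the transparent reason that both are reduced, equidimensional of the same dimension, and one is a quotient of the other; the extra component, when present, is identified as the closure of the genuinely semistable locus and analysed via $\mathcal L$-invariants and the image in pseudo-deformation space, not via Breuil module combinatorics. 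Your route might be made to work with a serious amount of additional integral $p$-adic Hodge theory, but as written it substitutes unsubstantiated technical claims for the short argument available once the conjectural hypothesis is exploited.
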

\begin{proof}
  It is presumably possible to establish this in most cases via direct
  computations with Fontaine-Laffaille theory; however, we take the
  opportunity to use our geometric formulation of the Breuil--Mezard conjecture.
  We begin by noting that it follows from Corollary 1.7.14 of
  \cite{kisinfmc} that the cycles $\cC_{m,n}$ are irreducible,
  except in the cases that $n=p-1$ and $\rbar|_{I_\Qp}\sim
  \begin{pmatrix}\omega^{m+1}&*\\0&\omega^m
    
  \end{pmatrix}$ is peu ramifi\'ee, or $n=p-2$ and $\rbar\sim\omega^m\otimes
  \begin{pmatrix}
    \mu_1&0\\0&\mu_2
  \end{pmatrix}$ for distinct unramified characters $\mu_1$ and
  $\mu_2$. Thus to prove the claimed disjointness of cycles, it is
  enough to prove that the cycles are not equal.

  We first consider the cases where $\cC_{m,0}$ and
  $\cC_{m,p-1}$ are nonzero.
  For notational simplicity, we make a twist by $\omega^{-m}$, and 
  thus further assume that $m = 0$.
  Consider the trivial inertial
  type $\triv$. Examining Henniart's appendix to \cite{breuil-mezard}, we
  see that $\sigma(\triv)=\St$, the Steinberg representation, while
  $\sigma^\cris(\triv)=\triv$, the trivial representation. The reduction mod
  $p$ of $\St$ is just $\sigma_{p-1}$, so we conclude
  that
\[\cC_{0,p-1}=\cycle(R^{\square,\psi_\cris}(\widetilde{0},0,\triv,\rbar)/\pi),\] \[\cC_{0,0}=\cycle(R^{\square,\psi_\cris}_\cris(\widetilde{0},0,\triv,\rbar)/\pi).\]
  By definition, $R^{\square,\psi_\cris}_\cris(\widetilde{0},0,\triv,\rbar)$ is a quotient of
  $R^{\square,\psi}(0,0,\triv,\rbar)$, and as they have the same dimension,
  we see that $\Spec R_{\cris}^{\square,\psi_\cris}(\widetilde{0},0,\triv,\rbar)$ is a union of irreducible components of
  $\Spec R^{\square,\psi_\cris}(\widetilde{0},0,\triv,\rbar)$.

  It is easy to see that the two rings
  are actually equal unless $\rbar$ is a twist of a peu ramifi\'e extension of
  $1$ by $\omega$, because the only non-crystalline semistable
  crystalline representations with Hodge--Tate weights $0$ and $1$ are
  unramified twists of extensions of $1$ by $\varepsilon$, and so it suffices to
  check, in the case when  $\rbar$ {\em is} a twist of a peu ramifi\'e extension of
  $1$ by $\omega$, that $\Spec R^{\square,\psi_\cris}(\widetilde{0},0,\triv,\rbar)/\pi)$ has two distinct
  components. In the case that the extension is non-split, the framed
  deformation ring is formally smooth over the deformation ring, and
  the relevant rings are computed in Theorem 5.3.1(i) of \cite{breuil-mezard};
  in particular, they do verify that the $\Spec$s of their reductions mod $\pi$ contain two
  distinct components.

  We now explain another way to see that
$\Spec R^{\square,\psi_\cris}(\widetilde{0},0,\triv,\rbar)/\pi)$
  consists of two distinct components,
  which works equally well in the case when $\rbar$ is split.
  Namely, a two-dimensional crystalline representation with Hodge--Tate weights $0$ and $1$
  with reducible reduction is necessarily an extension of characters,
  whose restrictions to
  $I_p$ are trivial and cyclotomic respectively, and it is uniquely determined by its
  associated pseudo-representation.  The cycle $\cC_{0,0}$
  is thus directly seen to be irreducible,
  and it has non-trivial image in the associated pseudo-deformation space.
  On the other hand, as we already observed, a genuinely semi-stable two-dimensional
  deformation of $\rbar$ with Hodge--Tate weights $0$ and $1$ is necessarily a twist
  of an extension of the trivial character by the cyclotomic character, with the possible twist
  being uniquely determined (since we have fixed the determinant to be $\psi \varepsilon$).
  Such extensions are determined by their $\mathcal L$-invariant, and so one can give
  an explicit description of the Zariski closure of the space of genuinely semi-stable
  deformations, and show that this closure, as well as its reduction mod $\pi$, is irreducible.
  Moreover, its reduction $\bmod \, {\pi}$ does not coincide with $\cC_{0,0}$,
  since its image in the associated pseudo-deformation space is simply the closed point.
  Thus we have shown that $\cC_{0,p-1}$ is the sum of two distinct irreducible components.
 
Consider next the case that $\rbar|_{I_\Qp}\sim\omega^m\otimes
  \begin{pmatrix}
    \omega_2^{n+1}&0\\0&\omega_2^{p(n+1)}
  \end{pmatrix}$ with $0<n<p-1$. We need to show that
  $\cC_{m,n}\ne\cC_{m+n,p-1-n}$. Consider the inertial
  type $\omegat^{m+n}\oplus\omegat^m$, and choose $\psi$ so that $\psi|_{I_\Qp}=\varepsilon\omegat^{2m+n}$. Then by for example Lemmas
  3.1.1 and 4.2.4 of \cite{MR1639612}, the
  semisimplification of the reduction modulo $p$ of
  $\sigma(\omegat^{m+n}\oplus\omegat^m)$ (the representation of
  $\GL_2(\Zp)$ associated to $\omegat^{m+n}\oplus\omegat^m$ by
  Henniart's inertial local Langlands correspondence, which is the
  inflation to $\GL_2(\Zp)$ of a principal series representation of $\GL_2(\Fp)$)  has
  Jordan--H\"older factors $\sigma_{m,n}$ and $\sigma_{m+n,p-1-n}$, so
  that we
  have \[\cycle(R^{\square,\psi}(0,1,\omegat^{m+n}\oplus\omegat^m,\rbar)/\pi)=\cC_{m,n}+\cC_{m+n,p-1-n}.\]
  By Theorem 6.22 of \cite{MR2137952} (noting that the framed
  deformation ring is formally smooth over the deformation ring, since
  $\rbar$ is irreducible), we have
  $R^{\square,\psi}(0,1,\omegat^{m+n}\oplus\omegat^m,\rbar)/\pi\cong\F[[U,V,W,X,Y]]/(XY)$, so
  $\cC_{m,n}\ne\cC_{m+n,p-1-n}$, as required.

  It remains to treat the cases where
  $\rbar\sim\chibar_1\oplus\chibar_2$ is a direct sum of two distinct
  characters. In this case, one knows that all of the relevant lifting
  rings $R^{\square,\psi_\cris}_\cris(\widetilde{m},n,\triv,\rbar)$ are in fact ordinary
  lifting rings (cf. Corollary 1.7.14 and Remark 1.7.16 of
  \cite{kisinfmc}). It thus suffices to note that the cycles that we
  have to prove are distinct correspond respectively to liftings which
  contain a submodule lifting $\chibar_1$ or which contain a submodule
  lifting $\chibar_2$, and since $\chibar_1\ne\chibar_2$, it is easy
  to see that the cycles are distinct.
\end{proof}
\noindent {\em Proof of Proposition~\ref{prop:components}.} The
assertions about the number of components of $\cC_{m,n}$ follow
from Proposition \ref{prop:relationship between cycles for GL2 Qp} above
together with Corollary 1.7.14 of \cite{kisinfmc}, and the
disjointness of the supports of the $\cC_{m,n}$ follows from
Proposition \ref{prop:relationship between cycles for GL2 Qp}.\qed

\subsection{Comparison with the results of \cite{breuilmezardIII}}
\label{subsec:comparison}
In \cite{breuilmezardIII}, Breuil and M\'ezard construct, for generic
$\rbar$, a correspondence between the irreducible components
of $\Spec R^{\square,\psi}(a,b,\tau,\rbar)/\pi$ and the
Serre weights for $\rbar$
which appear with positive multiplicity in the mod $p$ reduction
of $(\det^a\otimes \Sym^b E^2)\otimes_E \sigma(\tau)$. 
The definition of the correspondence is given in
\cite[Thm.~1.5]{breuilmezardIII}:
namely, if $\mathfrak a$ is a generic point
of $\Spec R^{\square,\psi}(a,b,\tau,\rbar)/\pi$,
then the associated Serre weight is the $\GL_2(\Z_p)$-socle
of a certain $\GL_2(\Q_p)$-representation obtained from the universal Galois representation into $\GL_2\bigl(R^{\square,\psi}
(a,b,\tau,\rbar)/(\pi,\mathfrak a)\bigr)$
via the $p$-adic local Langlands correspondence.

According to our geometric formulation of the Breuil--M\'ezard conjecture,
the component $\overline{\{\mathfrak a\}}$
of $\Spec R^{\square,\psi}(a,b,\tau,\rbar)/\pi$
is equal to
$\Spec R^{\square,\psi_\cris}_{\cris}(\widetilde{m},n,\triv,\rbar)/\pi)$,
for some Serre weight $\sigma_{m,n}$ of $\rbar$.
Since the $p$-adic local Langlands correspondence is functorial, 
we find that 
the $\GL_2(\Z_p)$-socle of the $\GL_2(\Q_p)$-representation attached to $\mathfrak a$,
thought of as {\em a} generic point of 
$\Spec R^{\square,\psi}(a,b,\tau,\rbar)/\pi$,
is the same as 
the $\GL_2(\Z_p)$-socle of the $\GL_2(\Q_p)$-representation
attached to $\mathfrak a$,
thought of as {\em the} generic point of 
$\Spec R^{\square,\psi_\cris}_{\cris}(\widetilde{m},n,\triv,\rbar)/\pi)$.
But in this latter case, the $\GL_2(\Z_p)$-socle in question
is equal to $\sigma_{m,n}$, as follows from the fact that
the correspondence of \cite{breuilmezardIII} is compatible with
the original conjecture of \cite{breuil-mezard}.
This shows that the correspondence of \cite{breuilmezardIII} is 
precisely the correspondence $\cC_{m,n} \mapsto \sigma_{m,n}$.

\section{The Breuil--M\'ezard Conjecture for $\GL_n$}\label{sec: BM
  conjecture for GLn}\subsection{The numerical conjecture}We begin by formulating a natural generalisation
of the Breuil--M\'ezard conjecture for $n$-dimensional
representations. We now fix the notation we will use for the rest of
the paper, which differs in some respects from that of Section
\ref{sec:GL2 Qp}, but is closer to that used in the literature on
automorphy lifting theorems for unitary groups. We remind the reader
that for the rest of the paper, our convention on Hodge--Tate weights
is that the Hodge--Tate weight of $\varepsilon$ is $-1$. Let $K/\Qp$ be a
finite extension with ring of integers $\cO_K$ and residue field $k$,
let $E/\Qp$ be a finite extension with ring of integers $\cO$,
uniformiser $\pi$ and residue field $\F$, and let
$\rbar:G_K\to\GL_n(\F)$ be a continuous representation. Assume that
$E$ is sufficiently large, and in particular that $E$ contains the
images of all embeddings $K\into\Qpbar$.
Let $\Z^n_+$ denote the set of tuples $(\lambda_1,\dots,\lambda_n)$ of
integers with $\lambda_1\ge \lambda_2\ge\dots\ge \lambda_n$. For any $\lambda\in\Z^n_+$, view
$\lambda$ as a dominant character of the algebraic group $\GL_{n/\cO}$ in
the usual way, and
let $M'_\lambda$ be the algebraic $\cO_K$-representation of $\GL_n$ given
by \[M'_\lambda:=\Ind_{B_n}^{\GL_n}(w_0\lambda)_{/\cO_K}\] where $B_n$ is the
 Borel subgroup of upper-triangular matrices of $\GL_n$, and $w_0$ is the longest element
of the Weyl group (see \cite{MR2015057} for more details of these
notions, and note that $M'_\lambda$ has highest weight $\lambda$). Write $M_\lambda$ for the $\cO_K$-representation of $\GL_n(\cO_K)$
obtained by evaluating $M'_\lambda$ on $\cO_K$. For any
$\lambda\in(\Z^n_+)^{\Hom_\Qp(K,E)}$ we write $L_\lambda$ for the
$\cO$-representation of $\GL_n(\cO_K)$ defined
by \[\otimes_{\tau:K\into
  E}M_{\lambda_\tau}\otimes_{\cO_K,\tau}\cO.\]
Given any $a\in\Z^n_+$ with $p-1\ge a_i-a_{i+1}$ for all $1\le i\le
n-1$, we define the $k$-representation $P_a$ of $\GL_n(k)$ to be the
representation obtained by evaluating $\Ind_{B_n}^{\GL_n}(w_0 a)_{k}$
on $k$, and let $N_a$ be the irreducible sub-$k$-representation of $P_a$
generated by the highest weight vector (that this is indeed
irreducible follows for example from II.2.8(1) of \cite{MR2015057} and
the appendix to \cite{herzigthesis}).
 We say that an element $a\in(\Z^n_+)^{\Hom(k,\F)}$ is a \emph{Serre
  weight} if
\begin{itemize}
\item for each $\sigma\in\Hom(k,\F)$ and each $1\le i\le n-1$ we
have \[p-1\ge a_{\sigma,i}-a_{\sigma,i+1},\]
\item and for each $\sigma$ we have $0\le a_{\sigma,n}\le p-1$, and
  not all $a_{\sigma,n}=p-1$.
\end{itemize}
If $a\in(\Z^n_+)^{\Hom(k,\F)}$ is a Serre weight then we define an
irreducible $\F$-representation $F_a$ of $\GL_n(k)$
by \[F_a:=\otimes_{\tau\in\Hom(k,\F)}N_{a_\tau}\otimes_{k,\tau}\F.\]
The representations $F_a$ are absolutely irreducible and pairwise
non-isomorphic, and every irreducible $\F$-representation of $\GL_n(k)$ is
of the form $F_a$ for some $a$ (see for example the appendix to
\cite{herzigthesis}).
We say that  an
element $\lambda\in(\Z^n_+)^{\Hom_\Qp(K,E)}$ is a \emph{lift} of a
Serre weight $a\in(\Z^n_+)^{\Hom(k,\F)}$ if for each $\sigma\in\Hom(k,\F)$ there is an element
$\tau\in\Hom_\Qp(K,E)$ lifting $\sigma$ such
that $\lambda_{\tau}=a_\sigma$, and for all other
$\tau'\in\Hom_\Qp(K,E)$ lifting $\sigma$ we have
$\lambda_{\tau'}=0$.
We have a partial ordering $\le$ on Serre weights, where $b\le 
a$ if and only if $a-b$ is a sum of (positive) simple roots.
\begin{lem}
  \label{lem: decomposition of L_lambda mod p}If $\lambda$ is a lift of $a$ then
$L_\lambda\otimes_\cO\F$ has socle $F_a$, and every other
Jordan--H\"older factor of $L_\lambda\otimes_\cO\F$ is of the form
$F_b$ with $b<a$.
\end{lem}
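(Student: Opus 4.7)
The plan is to reduce the lemma, by unwinding definitions, to a statement about tensor products of familiar modular-representation-theoretic objects, and then to apply the classical structure theory of Weyl-module-like representations together with a tensor product argument.

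First I would compute $L_\lambda \otimes_\cO \F$ explicitly. For any $\tau : K \hookrightarrow E$, the composite $\cO_K \xrightarrow{\tau} \cO \twoheadrightarrow \F$ factors through the residue map $\cO_K \twoheadrightarrow k$ and an embedding $\sigma(\tau) : k \hookrightarrow \F$, so $M_{\lambda_\tau} \otimes_{\cO_K, \tau} \F = P_{\lambda_\tau} \otimes_{k, \sigma(\tau)} \F$ (using that $\Ind_{B_n}^{\GL_n}(w_0\lambda_\tau)$ is compatible with base change). Since $\lambda$ is a lift of $a$, for each $\sigma$ exactly one $\tau$ above $\sigma$ has $\lambda_\tau = a_\sigma$, while the remaining $\tau'$ contribute $P_{0}$, which is the trivial representation. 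Hence
\[
L_\lambda \otimes_\cO \F \;\cong\; \bigotimes_{\sigma : k \hookrightarrow \F} \bigl(P_{a_\sigma} \otimes_{k,\sigma} \F\bigr),
\]
where $\GL_n(\cO_K)$ acts through its reduction $\GL_n(k)$.

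Next I would invoke the classical structure theorem for the induced (``dual Weyl'') module $P_{a_\sigma}$ on each factor: for $a_\sigma$ in the restricted region (which holds since $a$ is a Serre weight), the $\GL_n(k)$-module $P_{a_\sigma}$ has simple socle $N_{a_\sigma}$, and every other Jordan--Hölder factor is of the form $N_{c}$ with $c < a_\sigma$ in the partial order generated by positive simple roots. This is the standard result combining the analogous statement for the algebraic group $\GL_{n/k}$ (see \cite{MR2015057}, II.2) with Steinberg's tensor product theorem, which guarantees that the relevant irreducible algebraic representations remain irreducible on restriction to $\GL_n(k)$ and account for all irreducible $k$-representations; see also the appendix to \cite{herzigthesis}.

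Finally I would combine these facts via a tensor product argument. The key observation is that the factors $P_{a_\sigma} \otimes_{k,\sigma} \F$ are pulled back from $\GL_n(\F)$-representations via \emph{distinct} embeddings $\sigma$, so the action of $\GL_n(k)$ on the tensor product factors through the map $\GL_n(k) \to \prod_\sigma \GL_n(\F)$ coming from $k \otimes_{\F_p} \F \iso \prod_\sigma \F$. Under this identification the socle of an external tensor product is the tensor product of the socles, and the Jordan--Hölder factors of the external tensor product are the external tensor products of Jordan--Hölder factors; both statements hold because the irreducibles for the product group are external tensor products of irreducibles (which is precisely the classification $F_a = \bigotimes_\sigma N_{a_\sigma}\otimes_{k,\sigma}\F$ quoted in the text). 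Applying this gives $\soc(L_\lambda \otimes_\cO \F) = \bigotimes_\sigma (N_{a_\sigma} \otimes_{k,\sigma}\F) = F_a$, and every other Jordan--Hölder constituent is of the form $F_b$ with $b_\sigma \le a_\sigma$ for all $\sigma$ and strict inequality for some $\sigma$, i.e.\ $b < a$. The only nontrivial point to verify carefully is the external-tensor-product behaviour of socles and composition factors; this is the step I would check most carefully, but it follows cleanly from Steinberg's theorem once one rewrites everything in terms of the product group $\prod_\sigma \GL_n(\F)$.
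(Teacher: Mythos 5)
Your overall strategy matches what the paper's one-line proof outsources to sections 5.8--5.9 of the cited reference: unwind $L_\lambda\otimes_\cO\F$ as the twisted tensor product $\bigotimes_\sigma\bigl(P_{a_\sigma}\otimes_{k,\sigma}\F\bigr)$, use the structure theory of the dual Weyl module on each factor, and reassemble via Steinberg's tensor product theorem. The decomposition and the statement about Jordan--H\"older factors are handled correctly: filtering one factor at a time and invoking Steinberg's irreducibility of $F_b$ gives exactly the claim that every constituent is $F_b$ with $b_\sigma\le a_\sigma$ componentwise, i.e.\ $b\le a$.

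Where you are too quick is in the final sentence about the socle. You are working with the tensor product as a module over the \emph{subgroup} $\GL_n(k)\hookrightarrow\prod_\sigma\GL_n(\F)$, not over the product group itself, and restriction of a module to a subgroup can strictly enlarge the socle. Steinberg's theorem only says that each $F_b$, which is an irreducible of $\prod_\sigma\GL_n(\F)$ with restricted highest weight, \emph{remains} irreducible when restricted to $\GL_n(k)$; it says nothing a priori about whether the socle of a non-semisimple module such as $\bigotimes_\sigma\bigl(P_{a_\sigma}\otimes_{k,\sigma}\F\bigr)$ can acquire extra constituents after restriction. The substantive input needed at this point is a Smith-type result: for $a$ restricted, the $U(k)$-fixed vectors of the twisted tensor product coincide with the $\prod_\sigma U(\F)$-fixed vectors (the highest-weight line), which then forces the socle over $\GL_n(k)$ to be $F_a$ alone. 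This is precisely the content supplied by the reference the paper cites, so your outline is correct, but you should not attribute this step to Steinberg's theorem; it is a separate theorem about $U^F$-invariants (or equivalently about the behaviour of the socle under restriction to the fixed-point subgroup), and it is the genuinely nontrivial content of the lemma.
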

\begin{proof}
 This follows from sections 5.8 and 5.9 of \cite{MR2199819} (noting
 that the orderings $\le$ and $\le_\Q$ coincide for $\GL_n$).
\end{proof}
Let $\tau:I_K\to\GL_n(E)$ be a representation with open kernel which
extends to $W_K$, and take $\lambda\in(\Z^n_+)^{\Hom_\Qp(K,E)}$. Let
$\rbar:G_K\to\GL_n(\F)$ be a continuous representation. If $E'/E$ is
a finite extension, we say that a potentially crystalline
representation $\rho:G_K\to\GL_n(E')$ has \emph{Hodge type} $\lambda$
if for each $\tau:K\into
E$, \[\HT_{\tau}(\rho)=\{\lambda_{\tau,1}+n-1,\lambda_{\tau,2}+n-2,\dots,\lambda_{\tau,n}\}.\]
We say that $\rho$ has \emph{inertial type} $\tau$ if the restriction
to $I_K$ of the Weil--Deligne representation associated to $\rho$ is
equivalent to $\tau$.
\begin{prop}\label{prop: existence of local deformation rings}
  For each $\lambda$, $\tau$ there is a unique quotient
  $R^\square_{\rbar,\lambda,\tau}$ of the universal lifting
  $\cO$-algebra $R^\square_{\rbar}$ for
  $\rbar$ with the following properties.
  \begin{enumerate}
  \item $R^\square_{\rbar,\lambda,\tau}$ is reduced and $p$-torsion free, and
    $R^\square_{\rbar,\lambda,\tau}[1/p]$ is formally smooth
    and equidimensional of dimension $n^2+[K:\Qp]n(n-1)/2$.
  \item If $E'/E$ is a finite extension, then an $\cO$-algebra homomorphism
    $R^\square_{\rbar}\to E'$ factors through
    $R^\square_{\rbar,\lambda,\tau}$ if and only if the
    corresponding representation $G_K\to\GL_n(E')$ is potentially
    crystalline of Hodge type $\lambda$ and inertial type $\tau$.
  \item $R^\square_{\rbar,\lambda,\tau}/\pi$ is equidimensional.
  \end{enumerate}
\end{prop}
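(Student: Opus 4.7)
The plan is to adapt Kisin's construction of potentially semistable deformation rings from the two-dimensional setting of \cite{kisindefrings} to the $n$-dimensional setting, as has been carried out in the literature (e.g.\ by Balaji, Bellovin, and Kisin himself in various subsequent works). I will break this into three tasks corresponding to the three items in the proposition.

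First, I would construct the quotient $R^\square_{\rbar,\lambda,\tau}$ and prove properties (1) and (2) together. The idea is to use Kisin's theory of $(\varphi,\hat{G})$-modules (or Breuil--Kisin modules over a finite Galois extension $K'/K$ large enough that $\tau|_{G_{K'}}$ is trivial) to construct a projective resolution of a suitable moduli space whose points parameterize potentially crystalline lifts of Hodge type $\lambda$ and inertial type $\tau$. Passing to the scheme-theoretic image in $\Spec R^\square_{\rbar}$ of this resolution produces a closed subscheme, and killing $p$-torsion and nilpotents yields a reduced, $p$-torsion free quotient characterized by (2). Uniqueness follows from the fact that any two quotients with property (2) agree on a Zariski dense subset of $\Spec[1/p]$ and are both reduced and $\cO$-flat.

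Second, for the formal smoothness and dimension count of the generic fiber in (1), I would argue as in \cite{kisindefrings}. Over a characteristic zero point $x$ corresponding to a potentially crystalline lift $\rho_x$ of Hodge type $\lambda$ and inertial type $\tau$, the tangent space is computed via the fact that the filtered $(\varphi,N,G_{K'/K})$-module $D_{\mathrm{pst}}(\rho_x)$ deforms freely, subject only to the requirement that the Hodge filtration vary within a flag variety of dimension $[K:\Qp]\binom{n}{2}$ (via the discreteness of Hodge--Tate weights $\lambda$). Combining this with the $n^2$-dimensional framing and the smoothness of the flag variety gives formal smoothness and the asserted dimension $n^2+[K:\Qp]n(n-1)/2$; equidimensionality then follows because the locus is a torsor over a connected moduli.

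Third, for property (3), the key input is the $\cO$-flatness of $R^\square_{\rbar,\lambda,\tau}$ (established in step one) together with the equidimensionality of its generic fiber. I would invoke the standard principle that for a complete local Noetherian $\cO$-flat ring $R$ with $R[1/p]$ equidimensional, if $R$ is moreover a complete intersection (or Cohen--Macaulay), then the special fiber $R/\pi$ is equidimensional of dimension $\dim R[1/p]$. The Cohen--Macaulay property for these rings can be established by exhibiting $R^\square_{\rbar,\lambda,\tau}$ as the ring of global functions on a resolution by a smooth moduli of Breuil--Kisin modules (which is formally smooth over $\cO$) and showing that the resolution map is proper and generically an isomorphism, so that the special fiber inherits the equidimensionality from the resolution.

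The main obstacle I anticipate is the third step: equidimensionality of the special fiber is not automatic from flatness alone, and in the $n$-dimensional setting it is genuinely more delicate than in the $\GL_2$ case, since the Breuil--Kisin resolution is no longer an isomorphism on the whole generic fiber and one must control the geometry of the boundary carefully. The cleanest approach, as in Kisin's proof, is to realize $R^\square_{\rbar,\lambda,\tau}$ as the completed local ring at a closed point of a projective moduli scheme of Breuil--Kisin modules with descent data, whose generic fiber is smooth of the correct dimension, and then conclude equidimensionality of the special fiber by semicontinuity of fiber dimensions together with the fact that the total space is $\cO$-flat and irreducible components dominate $\Spec \cO$.
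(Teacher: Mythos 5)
The paper's own proof of this proposition is a one-line citation: parts (1) and (2) are delegated entirely to \cite{kisindefrings}, and part (3) is stated to be ``a straightforward consequence of the first,'' with reference to Lemme 2.1 of \cite{breuilmezardIII}. So your outline for (1) and (2) is essentially a reconstruction of Kisin's argument, which is reasonable to cite as a black box but which you are not expected to reprove here; that part of your plan is broadly consistent with the literature.

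The genuine gap is in your treatment of (3). You identify this step as ``the main obstacle'' and propose routing it through Cohen--Macaulayness of $R^\square_{\rbar,\lambda,\tau}$, established via the Breuil--Kisin resolution. This is backwards on two counts. First, no Cohen--Macaulay (or complete intersection) hypothesis is needed: once one knows $R := R^\square_{\rbar,\lambda,\tau}$ is reduced and $\cO$-flat with $R[1/p]$ equidimensional of dimension $d$, one concludes that $R$ itself is equidimensional of dimension $d+1$ (every minimal prime of $R$ survives in $R[1/p]$ because $p$ is a non-zero-divisor, and a complete local domain with $\cO$-flat quotient by a prime of $R[1/p]$ of dimension $d$ has dimension $d+1$). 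Complete local Noetherian rings are catenary, so for any minimal prime $\fq$ of $R/\pi$ --- which has height exactly $1$ in $R$ by Krull's principal ideal theorem together with the fact that $\pi$ is a non-zero-divisor --- one gets $\dim R/\fq = (d+1) - 1 = d$. That is the entirety of the argument, and it is exactly what Lemme 2.1 of \cite{breuilmezardIII} records. Second, the Cohen--Macaulay property of these potentially crystalline deformation rings is a substantially deeper fact (not available by these methods at the time of writing), and your proposed mechanism for establishing it fails for the reason you yourself flag: the Breuil--Kisin resolution is not generically an isomorphism for $n>2$, so you cannot transport Cohen--Macaulayness along it, nor does ``semicontinuity of fiber dimensions'' give you equidimensionality of a special fiber of an $\cO$-flat scheme in the absence of further hypotheses. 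In short, you have turned the easiest part of the proposition into the hardest, and the route you propose for it does not close.
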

\begin{proof}
  All but the final point are proved in \cite{kisindefrings}, and the
  final statement is a straightforward consequence of the first
(cf.\ the proof of Lemme 2.1 of \cite{breuilmezardIII}).
\end{proof}If $\tau$ is trivial we will write
$R^{\square}_{\rbar,\lambda}$ for
$R^{\square}_{\rbar,\lambda,\tau}$. The Breuil--M\'ezard conjecture
predicts the Hilbert--Samuel multiplicity
$e(R^\square_{\rbar,\lambda,\tau}/\pi)$. In order to state the
conjecture, it is first necessary to make a conjecture on the
existence of an inertial local Langlands correspondence for
$\GL_n$. The following is a folklore conjecture.
\begin{conj}\label{conj: inertial local Langlands, N=0}If $\tau$ is an
  inertial type, then there is a finite-dimensional smooth irreducible
  $\Qpbar$-representation $\sigma(\tau)$ of $\GL_n(\cO_K)$ such that
  if $\taut$ is any Frobenius-semisimple Weil--Deligne representation of $W_K$
over $\Qpbar$, then the restriction of
  $(\rec_p^{-1}(\taut)^\vee)$ to ${\GL_n(\cO_K)}$ contains
{\em (}an isomorphic copy of{\em )}
$\sigma(\tau)$ as a subrepresentation if and
  only if $\taut|_{I_K}\sim\tau$ and $N=0$ on $\taut$. If $p>n$ then
  $\sigma(\tau)$ is unique up to isomorphism.
\end{conj}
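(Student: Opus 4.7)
The plan is to construct $\sigma(\tau)$ via the Bushnell--Kutzko theory of types, and to verify its characterization by translating through $\rec_p$ to the automorphic side. The condition ``$\taut|_{I_K}\sim\tau$ and $N=0$'' selects, among all irreducible smooth $\Qpbar$-representations $\pi$ of $\GL_n(K)$ whose Langlands parameter has inertial restriction $\tau$, precisely those $\pi$ which are full parabolic inductions from supercuspidals, as opposed to proper Langlands subquotients carrying nontrivial monodromy. These form a specific subset of a finite union of Bernstein blocks determined by $\tau$.

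First I would decompose $\tau \cong \bigoplus_i \tau_i^{m_i}$ into distinct irreducible inertial types over $I_K$. By local Langlands, each $\tau_i$ lifts to an irreducible supercuspidal $\pi_i$ of $\GL_{n_i}(K)$, unique up to unramified twist. Bushnell--Kutzko attach to each $\pi_i$ a simple type $(J_i,\lambda_i)$, and one may assemble these into a semisimple type detecting the relevant Bernstein components. The refinement of Schneider--Zink (``$K$-types for tempered components'') promotes this to a representation on $\GL_n(\cO_K)$, obtained (roughly) by compactly inducing $\bigotimes_i \lambda_i^{\boxtimes m_i}$ from a suitable parahoric subgroup up to $\GL_n(\cO_K)$, and then extracting an appropriate irreducible subquotient. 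The candidate $\sigma(\tau)$ is this representation. The ``only if'' direction of the characterization then follows from the defining property of the underlying type: if $\sigma(\tau)$ embeds in $\rec_p^{-1}(\taut)^\vee|_{\GL_n(\cO_K)}$, then $\rec_p^{-1}(\taut)$ lies in the relevant Bernstein block, forcing $\taut|_{I_K}\sim\tau$; the extra constraint $N=0$ will be built into the particular choice of inducing parahoric. For the ``if'' direction, Frobenius reciprocity together with compatibility of $\rec_p$ with parabolic induction reduces the containment to a cuspidal statement supplied directly by the Bushnell--Kutzko construction.

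The main obstacle is arranging the construction so that $\sigma(\tau)$ detects $N=0$ alone, not the full Bernstein components. In the raw Bushnell--Kutzko setup, a type for a Bernstein block appears in every irreducible representation of that block, including Langlands subquotients with $N\neq 0$; to suppress the $N\neq 0$ contributions one must realize the type not on a small compact open subgroup but on the full maximal compact $\GL_n(\cO_K)$, exploiting the fact that parabolic induction from distinct parabolics yields representations distinguishable on $\GL_n(\cO_K)$. Carrying this out is a delicate representation-theoretic task, done in various special cases (level-zero, tame, and certain low-rank situations) in the literature, and constitutes a genuine open problem in its full generality. Finally, for uniqueness when $p>n$, the strategy is to note that any two candidates $\sigma(\tau)$ and $\sigma'(\tau)$ must both lie in the $\GL_n(\cO_K)$-socle of $\rec_p^{-1}(\taut)^\vee$ for a Zariski-dense family of $\taut$ meeting the hypotheses; the inequality $p>n$ is precisely what rules out mod $p$ congruences between the distinct characters of the finite reductive quotients of the various $J_i$, pinning the data $(J_i,\lambda_i)$ down up to isomorphism, and thereby forcing $\sigma(\tau) \cong \sigma'(\tau)$.
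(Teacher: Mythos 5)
This statement is a \emph{conjecture} in the paper, not a theorem, and the paper supplies no proof of it. The authors explicitly describe it as a folklore conjecture, note that it is known for $n=2$ (Henniart, appendix to \cite{breuil-mezard}) and for supercuspidal types in general $n$ (Pa\v{s}k\={u}nas, \cite{MR2180458}), and state that it is open in general. For the subsequent arguments the paper does not rely on Conjecture~\ref{conj: inertial local Langlands, N=0} at all; it instead invokes the strictly weaker Theorem~\ref{thm: inertial local Langlands, N=0}, in which $\taut$ is required to be \emph{pure}, and that theorem is quoted as a special case of~\cite[Prop.~6.5.3]{MR2656025}. So there is no proof of the statement for your proposal to be compared against.

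On its own terms, your proposal does not constitute a proof either, and indeed you say as much yourself. You correctly identify the central difficulty: a Bushnell--Kutzko (or Schneider--Zink) type for a Bernstein component is contained in \emph{every} irreducible representation lying in that component, including Langlands quotients/subrepresentations with nontrivial monodromy, and the whole content of the conjecture is to find a $\GL_n(\cO_K)$-representation that separates out exactly the $N=0$ objects. You concede that carrying this out in general is ``a genuine open problem in its full generality,'' which is precisely the status the paper assigns to the conjecture. The remaining parts of your sketch are plausible heuristics but also have soft spots: the appeal to ``compatibility of $\rec_p$ with parabolic induction'' glosses over the fact that $\rec_p$ intertwines parabolic induction with direct sum only at the level of the Grothendieck group, whereas the conjecture discriminates between parabolic inductions and their proper subquotients; and the uniqueness argument for $p>n$, phrased in terms of ruling out ``mod $p$ congruences,'' is not obviously relevant to a statement about $\Qpbar$-representations and is not developed to the point of being checkable. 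To make unconditional progress here one would have to either restrict attention to pure parameters (as the paper does, via~\cite{MR2656025}) or prove genuinely new results about $K$-types.
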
\begin{remark}
{\em
  A very similar conjecture is formulated in
  \cite{conley10}, which also proves some partial results in the case
  $n=3$. We have formulated this conjecture only for representations
  with $N=0$, as we will only formulate our generalisations of the
  Breuil--M\'ezard conjecture for potentially crystalline
  representations, in order to avoid complications in the global
  arguments of Section \ref{sec: patching for GLn}. We expect that a
  ``semistable'' version of the conjecture will also be valid, where
  one removes the conclusion that $N=0$ on $\tau$, but adds the
  requirement that $\rec_p^{-1}(\taut)\otimes|\det|^{(n-1)/2}$ be
  generic.}
\end{remark}
Conjecture \ref{conj: inertial local Langlands, N=0} is proved in the
case $n=2$ by Henniart in the appendix to \cite{breuil-mezard}. It has
been proved for any $n$ for supercuspidal representations by Pa\v{s}k\={u}nas
(\cite{MR2180458}), but to the best of our knowledge it is open in
general. However, the important point for us is the existence of
$\sigma(\tau)$, rather than its uniqueness, and this is known in
general. The following is a special case of Proposition 6.5.3
of~\cite{MR2656025} (see also \cite{conleytypes}).

\begin{thm}\label{thm: inertial local Langlands, N=0}If $\tau$ is an
  inertial type, then there is a finite-dimensional smooth irreducible
  $\Qpbar$-representation $\sigma(\tau)$ of $\GL_n(\cO_K)$ such that
  if $\taut$ is any pure Frobenius-semisimple Weil--Deligne representation of $W_K$
  over $\Qpbar$, then the restriction of
  $(\rec_p^{-1}(\taut)\otimes|\det|^{(n-1)/2})$ to ${\GL_n(\cO_K)}$ contains
{\em (}an isomorphic copy of{\em )}
$\sigma(\tau)$ as a subrepresentation if and
  only if $\taut|_{I_K}\sim\tau$ and $N=0$ on $\taut$.
  \end{thm}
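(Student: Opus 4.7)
The plan is to construct $\sigma(\tau)$ via the theory of types and reduce to the result (Proposition 6.5.3 of \cite{MR2656025}, itself a refinement of Schneider--Zink's work built on Bushnell--Kutzko's theory of simple types) that provides ``typical'' representations on the hyperspecial maximal compact $\GL_n(\cO_K)$. I proceed as follows.

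First, I would unpack the correspondence between inertial types and Bernstein components. Under local Langlands, an inertial type $\tau$ (up to isomorphism) determines a unique Bernstein component $\mathfrak{s} = [M, \rho]_G$ of the category of smooth $\Qpbar$-representations of $G = \GL_n(K)$, where $M$ is a standard Levi and $\rho$ is a supercuspidal representation of $M$ (well-defined up to unramified twist). Explicitly, if $\tau$ decomposes under $I_K$ into a direct sum of irreducible inertial orbits of dimensions $n_1, \ldots, n_r$ (with multiplicities), then $M$ is the corresponding block-diagonal Levi and $\rho$ is the product of supercuspidals of $\GL_{n_i}(K)$ whose parameters restrict to those orbits on $I_K$.

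Second, I would invoke the existence of a smooth irreducible $\sigma(\tau) := \sigma_{\mathfrak{s}}$ of $\GL_n(\cO_K)$ with the ``typical'' property: for an irreducible smooth $\Qpbar$-representation $\pi$ of $G$, one has $\Hom_{\GL_n(\cO_K)}(\sigma_{\mathfrak{s}}, \pi) \neq 0$ if and only if $\pi$ lies in $\mathfrak{s}$ and is \emph{generic} within $\mathfrak{s}$ in the Zelevinsky sense (i.e.\ is the full parabolic induction from a discrete series of $M$, rather than a proper Langlands subquotient). This is the content of the Schneider--Zink construction, in which $\sigma_{\mathfrak{s}}$ is built out of Deligne--Lusztig representations of $\GL_n(k)$ and (in the wildly ramified cases) an inflation from $\GL_n(\cO_K/\varpi^c)$ for some conductor $c$; the typical property is then checked by a Mackey-theoretic analysis of $\Ind_P^G$ and its Jacquet modules.

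Third, I would match the Bernstein-theoretic dichotomy with the Galois-theoretic one. Let $\taut$ be pure and Frobenius-semisimple with $\taut|_{I_K} \sim \tau$, and set $\pi := \rec_p^{-1}(\taut) \otimes |\det|^{(n-1)/2}$; this lies in $\mathfrak{s}$ precisely because inertial equivalence of $L$-parameters matches inertial equivalence of Bernstein data. Purity of $\taut$ forces $\pi$ to be essentially tempered, and in that setting the condition $N=0$ on $\taut$ corresponds (under Zelevinsky's classification combined with the compatibility of local Langlands with parabolic induction) to $\pi$ being the full parabolic induction, rather than a proper Langlands quotient, of a tempered representation of $M$ in $\mathfrak{s}$. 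Thus $N=0$ is exactly ``generic'' in the sense of the typicality statement, giving both directions of the claimed equivalence.

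The main obstacle is the ``only if'' direction of the typical property: namely, showing that $\sigma(\tau)$ does not embed into the $\GL_n(\cO_K)$-restriction of any proper Langlands subquotient of a reducible parabolic induction within $\mathfrak{s}$. This is where the precise Deligne--Lusztig design of $\sigma_{\mathfrak{s}}$ and the careful Mackey computation in \cite{MR2656025} are essential; by contrast, the ``if'' direction follows comparatively cleanly from Frobenius reciprocity once the compact induction picture of $\sigma_{\mathfrak{s}}$ from a Bushnell--Kutzko type is in place.
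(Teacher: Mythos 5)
The paper does not supply an argument here; it simply records the statement as a special case of Proposition~6.5.3 of Bella\"iche--Chenevier \cite{MR2656025}, which in turn rests on Schneider--Zink's theory of $K$-types. Your proposal is, in spirit, a reconstruction of what lies behind that citation, and the overall scaffolding (Bernstein component from the inertial type, Schneider--Zink's $\sigma_{\mathfrak{s}}$, matching $N$ against the Langlands/Zelevinsky data) is the right one. However, there is a genuine error in your ``Second'' step.

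You assert the typical property for \emph{every} irreducible smooth $\pi$: that $\Hom_{\GL_n(\cO_K)}(\sigma_{\mathfrak{s}},\pi)\neq 0$ if and only if $\pi$ lies in $\mathfrak{s}$ and is the full parabolic induction of a discrete series of $M$, rather than a proper Langlands subquotient. This is false. Already for $n=2$ and $\tau = \triv$, the representation $\pi = \triv_{\GL_2(K)}$ is a proper Langlands quotient of the unramified principal series $\Ind(|\cdot|^{1/2}\otimes|\cdot|^{-1/2})$, yet the trivial representation of $\GL_2(\cO_K)$ — which is $\sigma(\triv)$ in this normalization, cf.\ Henniart's $\sigma^{\cris}(\triv)$ — certainly embeds in $\triv|_{\GL_2(\cO_K)}$. (Note that both $\triv$ and $\St$ are proper Langlands subquotients of the same reducible induction; your characterization would exclude both, whereas in fact $\sigma(\triv)$ detects one and not the other.) Thus the ``only if'' direction you flag as the main technical obstacle is unprovable by any amount of Mackey computation, because the statement you are trying to establish is wrong. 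What Schneider--Zink (and hence Prop.~6.5.3 of \cite{MR2656025}) actually gives is a typicality statement restricted to (essentially) tempered $\pi$, and this is precisely why the theorem carries the purity hypothesis on $\taut$: purity of a Frobenius-semisimple $\taut$ forces $\rec_p^{-1}(\taut)\otimes|\det|^{(n-1)/2}$ to be a twist of an irreducible induction of discrete series, hence essentially tempered. Your ``Third'' step correctly uses purity, but only \emph{after} the typical property, whereas the restriction to tempered representations has to enter into the statement of the typical property itself. With that restriction inserted, your Steps Two and Three do combine to give the theorem.
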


Enlarging $E$ if necessary, we may assume that $\sigma(\tau)$ is defined
over $E$. Since it is a finite-dimensional representation of the
compact group $\GL_n(\cO_K)$, it contains a $\GL_n(\cO_K)$-stable
$\cO$-lattice $L_\tau$.
Set $L_{\lambda,\tau}:=L_\tau\otimes_\cO L_\lambda$, a finite free
$\cO$-module with an action of $\GL_n(\cO_K)$. Then we may
write \[(L_{\lambda,\tau}\otimes_\cO\F)^{\semis}\isoto\oplus_a
F_a^{n_a},\]where the sum runs over the Serre weights
$a\in(\Z^n_+)^{\Hom(k,\F)}$, and the $n_a$ are nonnegative
integers. Then the generalised Breuil--M\'ezard conjecture is the following.
\begin{conj}
  \label{conj: generalised BM conjecture}There exist integers
  $\mu_a(\rbar)$ depending only on $\rbar$ and $a$ such that $e(R^\square_{\rbar,\lambda,\tau}/\pi)=\sum_an_a\mu_a(\rbar)$.
\end{conj}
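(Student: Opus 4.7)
The plan is to mimic the global patching strategy of Section~\ref{subsec: patching for GL2 Qp}, adapted to rank~$n$ unitary groups. First, I would use the potential modularity results established in the appendix referenced in the introduction to realize $\rbar$ as $\rhobar|_{G_{F_{\widetilde v}}}$ for some CM field $F/F^+$ with $p$ split in $F^+$, a place $\widetilde v \mid p$ with $F_{\widetilde v} \cong K$, and an automorphic irreducible representation $\rhobar \colon G_F \to \GL_n(\F)$ unramified outside a controlled finite set of places, with big enough image to run Taylor--Wiles patching. Next, for each Serre weight $a$, define the candidate multiplicity $\mu_a(\rbar) := e(R^\square_{\rbar,\lambda_a}/\pi)$, where $\lambda_a \in (\Z^n_+)^{\Hom_\Qp(K,E)}$ is any lift of $a$ in the sense of the paper. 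One preliminary task is to check that this is independent of the choice of $\lambda_a$ (analogously to Remark~\ref{rem: mu is determined by low weight crystalline}) and that $\mu_a(\rbar)=0$ when $a$ is not a Serre weight of $\rbar$.

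Second, I would carry out the Kisin--Taylor--Wiles patching construction on spaces of algebraic automorphic forms for a definite unitary group of rank $n$ over $F^+$, producing an $(\Rbarinfty,\cO[[y_1,\ldots,y_{h+j}]])$-bimodule $M_\infty$, where $\Rbarinfty := \cotimes_{v\mid p}R^\square_{\rhobar|_{G_{F_v}},\lambda_v,\tau_v}[[x_1,\ldots,x_g]]$, equipped with a filtration $0=M_\infty^0\subset\cdots\subset M_\infty^s=M_\infty/\pi M_\infty$ whose graded pieces correspond to the Jordan--H\"older factors $F_{a_i}$ of $L_{\lambda,\tau}/\pi L_{\lambda,\tau}$, and whose isomorphism class as an $\Rbarinfty$-module depends only on $F_{a_i}$ (not on the particular $(\lambda,\tau)$ through which it arises). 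Granted a suitable $R=\T$-type theorem, $M_\infty$ is a faithful $\Rbarinfty$-module of generic rank one on each component, so that
\[
e(\Rbarinfty/\pi)=e(M_\infty/\pi M_\infty,\Rbarinfty/\pi)=\sum_i e(M^i_\infty/M^{i-1}_\infty,\Rbarinfty/\pi).
\]
Identifying the summands via the Serre-weight-dependence and applying Lemma~\ref{lem:product cycles} to strip off the formal variables yields the factorized identity $\prod_{v\mid p}e(R^\square_{\rhobar|_{G_{F_v}},\lambda_v,\tau_v}/\pi)=\prod_{v\mid p}\sum_a n_a\,\mu_a(\rhobar|_{G_{F_v}})$, from which Conjecture~\ref{conj: generalised BM conjecture} for $\rbar$ follows by specializing to the distinguished place $\widetilde v$.

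The main obstacles are twofold. First, one needs an $R=\T$-type theorem general enough to cover arbitrary potentially crystalline local lifts, not merely ordinary or Fontaine--Laffaille ones; this is available unconditionally only in restricted settings such as the two-dimensional potentially Barsotti--Tate case treated in \cite{geekisin}. Second, one needs a strong form of the weight part of Serre's conjecture for rank $n$ unitary groups in order to identify the support of each $M^i_\infty/M^{i-1}_\infty$ with $\Spec R^\square_{\rhobar|_{G_{F_v}},\lambda_{a_{v,i}}}/\pi$ when $a_{v,i}$ is a Serre weight of $\rhobar|_{G_{F_v}}$, and to guarantee vanishing otherwise --- this being the mechanism that forces $\mu_a(\rbar)$ to vanish off the set of actual weights of $\rbar$. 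Both inputs are known only in special cases, which is precisely why the authors formulate Conjecture~\ref{conj: generalised BM conjecture} conditionally and deduce unconditional consequences (as in Theorem~\ref{thm: purely local statement for unramified regular pot-BT}) only where both ingredients are presently available.
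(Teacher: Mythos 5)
You are being asked about Conjecture~\ref{conj: generalised BM conjecture}, which the paper formulates as a \emph{conjecture} and does not prove.  What the paper actually establishes (Theorem~\ref{thm: purely local statement assuming that for Serre weights, and in addition the statement of the support for lambda and tau}) is a conditional equivalence between the numerical conjecture, its geometric refinement, and a global $R=\mathbb{T}$-type statement, contingent on a suitable globalization existing and on the patching conditions of Lemma~\ref{key patching lemma} holding in low weight.  Your proposal tracks that Section~\ref{sec: patching for GLn} argument quite faithfully --- globalize via Appendix~\ref{sec:local to global}, patch on a definite rank-$n$ unitary group over $F^+$, filter $M_\infty/\pi M_\infty$ by Serre weights, and use faithfulness of $M_\infty$ plus Lemma~\ref{lem:product cycles} to factor the multiplicity identity --- and you correctly identify the two missing inputs at the end.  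So the approach matches the paper's; but note that the end result is not a proof of the conjecture, only the conditional reduction.

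There is, however, one substantive error in your setup: your definition $\mu_a(\rbar):=e\bigl(R^\square_{\rbar,\lambda_a}/\pi\bigr)$ is not the right one in general.  By Lemma~\ref{lem: decomposition of L_lambda mod p}, if $\lambda_a$ is a lift of $a$ then $L_{\lambda_a}\otimes_\cO\F$ has $F_a$ as its socle but may also contain constituents $F_b$ with $b<a$; it is irreducible only on the lowest alcove.  If the conjecture held with your definition, applying it to $(\lambda,\tau)=(\lambda_a,\triv)$ would give $e\bigl(R^\square_{\rbar,\lambda_a}/\pi\bigr)=\sum_b m_{b,a}\mu_b(\rbar)$ with $m_{a,a}=1$ and possibly $m_{b,a}>0$ for $b<a$, which is generally inconsistent with setting $\mu_a=e\bigl(R^\square_{\rbar,\lambda_a}/\pi\bigr)$.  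The paper (Remark~\ref{rem: generalised BM conjecture remarks}(1) and the discussion immediately preceding Theorem~\ref{thm: purely local statement assuming that for Serre weights, and in addition the statement of the support for lambda and tau}) defines the $\mu_a(\rbar)$ \emph{inductively}: solve the unitriangular linear system $e\bigl(R^\square_{\rbar,\lambda_b,\triv}/\pi\bigr)=\sum_a m_{a,b}\mu_a(\rbar)$ up the partial order $\le$ on Serre weights.  For $n=2$ over $\Qp$ the two definitions coincide (there is a single alcove), which may be why the slip is easy to make, but for $n\ge 3$ or $K\ne\Qp$ the inductive definition is essential.

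A smaller point: in the paper's globalization the conditions force $F^+_v\cong K$ and $\rhobar|_{G_{F_{\tv}}}\cong\rbar$ at \emph{every} place $v\mid p$, so the factorized multiplicity identity is a product of identical factors; one concludes by taking an appropriate root rather than by ``specializing to the distinguished place,'' as you phrase it.
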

\begin{remark}\label{rem: generalised BM conjecture remarks}
{\em
  \begin{enumerate}
  \item Assuming the conjecture, the integers $\mu_a(\rbar)$ may be
    computed recursively as follows. Let $\lambda_a$ be a lift of
    $a$. If $a$ is in the lowest alcove, then
    $L_\lambda\otimes_\cO\F=F_a$, and we have
    $\mu_a(\rbar)=e(R^{\square}_{\rbar,\lambda_a}/\pi)$. In
    general, by Lemma \ref{lem: decomposition of L_lambda mod p} we
    see that we may compute $\mu_a(\rbar)$ given
    $e(R^{\square}_{\rbar,\lambda_a}/\pi)$ and the values
    $\mu_b(\rbar)$, $b<a$. \item The reader might have expected the sum on the right hand side
    to only be over weights $a$ which are predicted Serre weights for
    $\rbar$. However, according to the philosophy explained in the
    introduction to~\cite{geekisin}, the predicted Serre weights $a$
    for $\rbar$ should be precisely the $a$ for which
    $\mu_a(\rbar)\ne 0$.
\item
The reader might wonder why we have formulated our $n$-dimensional
analogues of the 
Breuil--M\'ezard conjectures for liftings without fixed determinant,
when the conjecture is more usually stated as in Section \ref{sec:GL2
  Qp} for lifts with fixed determinant. The reason is that in the
global arguments we will make in Section \ref{sec: patching for GLn}
using unitary groups, it is the lifting rings without fixed
determinant that arise naturally. However, the conjectures with and
without fixed determinant are actually equivalent, at least as long as
$p>n$, as follows from Lemma~\ref{unrestricted determinant lifting ring is smooth
over the fixed determinant ring} below.
  \end{enumerate}
}
\end{remark}
\subsection{The geometric conjecture}\label{subsec: refined conjecture
  for GL_n}
We may now state our geometric conjecture, which is entirely analogous
to Conjecture \ref{conj: refined BM conjecture for GL2 Qp}.
\begin{conj}
  \label{conj: refined BM conjecture for GLn}For each Serre weight $a$ there is a cycle $\cC_a$ depending only
  on $\rbar$ and $a$ such
  that \[\cycle(R^\square_{\rbar,\lambda,\tau}/\pi)=\sum_{a}n_a\cC_a.\]
\end{conj}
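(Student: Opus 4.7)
\medskip

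\noindent\emph{Proof plan for Conjecture \ref{conj: refined BM conjecture for GLn} (conditional).} The plan is to establish the conjecture under the same kind of hypotheses used in Section~\ref{subsec: patching for GL2 Qp}, namely a suitable $R=\T$-type theorem and a strong form of the weight part of Serre's conjecture for rank $n$ unitary groups, by generalising the patching argument in Subsection~\ref{subsec: patching for GL2 Qp}. The candidate cycles will be defined recursively: for each Serre weight $a$, fix a lift $\lambda_a \in (\Z^n_+)^{\Hom_{\Q_p}(K,E)}$ of $a$ and attempt to set
\[
\cC_a := \cycle\bigl(R^{\square}_{\rbar,\lambda_a}/\pi\bigr) - \sum_{b < a} n^{(a)}_b\, \cC_b,
\]
where the $n^{(a)}_b$ are determined by $(L_{\lambda_a}\otimes_{\cO}\F)^{\ssg} \cong F_a \oplus \bigoplus_{b<a} F_b^{n^{(a)}_b}$ via Lemma~\ref{lem: decomposition of L_lambda mod p}. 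This is the unique definition consistent with the conjecture in the case $\tau$ trivial, so the content of the conjecture is (a) that $\cC_a$ so defined is a nonnegative cycle (independent of the choice of $\lambda_a$), and (b) that the resulting formula holds for arbitrary inertial type $\tau$.

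\medskip

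First I would realise $\rbar$ as the restriction at a place $v|p$ of a modular Galois representation $\bar{\rho} : G_F \to \GL_n(\F)$ of a suitable CM (or totally real) field $F$, using the local-to-global construction from the appendix. The key hypotheses needed on $\bar{\rho}$ are that $p$ splits completely in $F$, that $\bar{\rho}$ is irreducible with big image (so Taylor--Wiles primes are available), and that $\bar\rho$ satisfies the strong form of the weight part of Serre's conjecture we assume. I would then run the Taylor--Wiles--Kisin patching machine on a definite unitary group $G$ attached to $F$ that becomes $\GL_n$ at all places above $p$, as in~\cite{kisinfmc} but for unitary groups. This produces, for any choice of local inertial types $\tau_v$ and Hodge types $\lambda_v$ at the places $v|p$, a patched module $M_\infty$ over
\[
\Rbarinfty := \Bigl(\cotimes_{v|p,\,\cO} R^{\square}_{\rbar|_{G_{F_v}},\lambda_v,\tau_v}\Bigr)[[x_1,\ldots,x_g]],
\]
together with a filtration of $M_\infty/\pi$ whose graded pieces depend only on the Jordan--Hölder constituents $F_{a_v}$ appearing in $\bigotimes_{v|p}(L_{\lambda_v,\tau_v}\otimes_\cO \F)^{\ssg}$.

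\medskip

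The assumed $R=\T$-theorem, combined with the usual Taylor--Wiles dimension count, would show that $M_\infty$ is a faithful, $\pi$-torsion free $\Rbarinfty$-module which is generically free of rank one on each component of $\Spec \Rbarinfty$. The assumed strong form of Serre's conjecture (in its geometric guise) would identify the support of each graded piece $M_\infty^i/M_\infty^{i-1}$ with the appropriate product of crystalline lifting rings $R^{\square}_{\cris}(\lambda_{a_{v,i}},\rbar|_{G_{F_v}})/\pi$, and would furthermore force this support to be generically reduced. Putting this together with Proposition~\ref{prop:cutting out} (applied to the regular element $\pi$), Lemma~\ref{lem:cycles in exact sequences} and the product formula Lemma~\ref{lem:product cycles}, exactly as in the chain of (in)equalities (\ref{eqn:cycle equality})--(\ref{eqn:another cycle equality}), would yield
\[
\prod_{v|p}\cycle\bigl(R^{\square}_{\rbar|_{G_{F_v}},\lambda_v,\tau_v}/\pi\bigr) \;\geq\; \prod_{v|p}\sum_{a_v} n_{a_v}\,\cC_{a_v},
\]
with equality forced by the numerical Conjecture~\ref{conj: generalised BM conjecture} (reduced to multiplicities via Lemma~\ref{lem:multiplicities from cycles}). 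Varying over choices of $(\lambda_v,\tau_v)$ at the auxiliary places, one extracts the equality at a single place, yielding the desired cycle identity.

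\medskip

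The main obstacle will be arranging the global input with enough flexibility. Concretely, the difficulty has three sources. First, one needs the local-to-global result of the appendix to produce a $\bar{\rho}$ satisfying the strong weight part of Serre's conjecture, i.e., one that is automorphic for every Serre weight in its predicted set; this is delicate in rank $n$ because the set of predicted weights is combinatorially subtle and the relevant potential automorphy theorems are sensitive to the image hypotheses on $\bar{\rho}$. Second, making the graded pieces of the patched filtration literally supported on the expected crystalline components (rather than on smaller subschemes) is where the \emph{strong} form of the weight part of Serre's conjecture is crucial: one must know that every predicted weight actually contributes to the patched module, and with the correct genericity. Third, a diagonalisation argument as in Section~\ref{subsec: patching for GL2 Qp} is needed to ensure that the isomorphism class of $M_\infty^i/M_\infty^{i-1}$ as an $\Rinfty$-module depends only on the Serre weight and not on the auxiliary choices of $(\lambda_v,\tau_v)$; this is what allows one to define $\cC_a$ intrinsically (independent of the chosen lift $\lambda_a$) and is what upgrades the numerical Breuil--M\'ezard conjecture to its geometric refinement.
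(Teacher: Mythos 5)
Your proposal is essentially the paper's own strategy: the paper does not prove Conjecture~\ref{conj: refined BM conjecture for GLn} outright, but establishes a conditional equivalence (Theorem~\ref{thm: purely local statement assuming that for Serre weights, and in addition the statement of the support for lambda and tau}, via Lemma~\ref{key patching lemma}) by globalising $\rbar$ through the appendix, patching on a definite unitary group following~\cite{jack}, keeping track of a Jordan--H\"older filtration on $L_{\lambda,\tau}/\pi$, and then applying Proposition~\ref{prop:cutting out}, Lemma~\ref{lem:cycles in exact sequences} and Lemma~\ref{lem:product cycles} exactly as in the $\GL_2$ case. Your recursive construction of the $\cC_a$ and the diagonalisation argument for independence of $(\lambda,\tau)$ both match what the paper does.

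Two minor slips are worth flagging. First, in the unitary-group setting with Iwahori level at the auxiliary place $v_1$, the patched module is generically free of rank $n!$ over each component of $\Spec \Rbarinfty$, not of rank one; the rank-one statement is specific to the $\GL_2/\Q$ setup of Section~\ref{subsec: patching for GL2 Qp}. Second, your closing step --- deriving an inequality of cycles and then ``forcing'' equality by invoking the numerical Conjecture~\ref{conj: generalised BM conjecture} --- is redundant given your other hypotheses. Once the $R=\T$-type input gives the faithfulness and generic flatness of $M_\infty$ (condition~(2) of Lemma~\ref{key patching lemma}), Proposition~\ref{prop:cutting out} delivers the cycle \emph{identity} directly; that is precisely how the proof of Lemma~\ref{key patching lemma} establishes that~(2) implies~(5). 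In the paper's formulation, the numerical and geometric forms of the conjecture are then both consequences of (and equivalent to) the patching condition, rather than one being used to deduce the other. Invoking numerical BM to close the argument is thus an unnecessary additional hypothesis, and risks obscuring what the approach actually achieves: an equivalence between BM (in either form) and a certain automorphy/ $R=\T$ statement, given the strong form of the weight part of Serre's conjecture.
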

\begin{remark}
{\em  Again, if one assumes the conjecture then one can inductively
  compute the cycles $\cC_a$ in terms of the cycles
  $\cycle(R^{\square}_{\rbar,\lambda_a}/\pi)$.}
\end{remark}
\subsection{Twisting by characters}\label{subsec: twisting by
  characters}We now establish a lemma which implies in particular that
the Breuil--M\'ezard conjecture for liftings without
a fixed determinant (as we have formulated it above) is equivalent
to the analogous conjecture for liftings with a fixed determinant
(at least if $p > n$).
Note that, for a given inertial type $\tau$ and Hodge type $\lambda,$
there is a character
$\psi_{\lambda,\tau}:I_K\to\cO^\times$ such that any lift $\rho$ of
$\rbar$ of Hodge type $\lambda$ and inertial type $\tau$ necessarily
has $\det\rho|_{I_K}=\psi_{\lambda,\tau}$. Let $\psi:G_K\to\cO^\times$ be a
character such that $\psibar=\det\rbar$ and
$\psi|_{I_K}=\psi_{\lambda,\tau}$. We let
$R_{\rbar,\lambda,\tau}^{\square,\psi}$
denote the quotient of
$R_{\rbar,\lambda,\tau}^{\square}$
corresponding to lifts with determinant $\psi$. In the case $n=2$ and
$K=\Qp$ this
is~\cite[2.2.2.9]{breuil-mezard}.
\begin{lem}
  \label{unrestricted determinant lifting ring is smooth over the
    fixed determinant ring}Suppose that $p>n$. Then
  $R_{\rbar,\lambda,\tau}^{\square}\cong
  R_{\rbar,\lambda,\tau}^{\square,\psi}[[X]]$.
\end{lem}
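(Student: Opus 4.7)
The plan is to realize the universal lifting ring $R^{\square}_{\rbar,\lambda,\tau}$ as the universal lifting ring with fixed determinant $\psi$ trivialized by an unramified character lifting the trivial character, using that $p \nmid n$ lets us extract $n$-th roots uniquely.

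First, for any lift $\rho$ of $\rbar$ of Hodge type $\lambda$ and inertial type $\tau$ to a complete local Noetherian $\cO$-algebra $R$ with maximal ideal $\mathfrak{m}_R$, the character $\chi_\rho := (\det \rho) \cdot \psi^{-1} : G_K \to R^\times$ is unramified and lifts the trivial character. Indeed, $\det\rho|_{I_K} = \psi_{\lambda,\tau} = \psi|_{I_K}$ by the definitions, and $\det \rho \equiv \det \rbar = \barpsi \pmod{\mathfrak{m}_R}$. Hence $\chi_\rho$ takes values in $1 + \mathfrak{m}_R$, which is a pro-$p$ group.

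The key observation is that since $p > n$, in particular $p \nmid n$, the $n$-th power map on $1 + \mathfrak{m}_R$ is a bijection; more precisely, for any topological generator $\mathrm{Frob}$ of $G_K/I_K$, there is a unique element $u \in 1 + \mathfrak{m}_R$ with $u^n = \chi_\rho(\mathrm{Frob})$, and defining $\eta_\rho$ to be the unramified character with $\eta_\rho(\mathrm{Frob}) = u$ gives the unique unramified character with $\eta_\rho^n = \chi_\rho$. Then $\rho \otimes \eta_\rho^{-1}$ is a lift of $\rbar$ with determinant equal to $\psi$; moreover, since $\eta_\rho$ is unramified (hence crystalline with Hodge--Tate weight $0$ at every embedding, and trivial on $I_K$), the twist $\rho \otimes \eta_\rho^{-1}$ still has Hodge type $\lambda$ and inertial type $\tau$.

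The construction $\rho \mapsto (\rho \otimes \eta_\rho^{-1}, \eta_\rho)$ is a bijection between the set of lifts of $\rbar$ of Hodge type $\lambda$ and inertial type $\tau$ valued in $R$ and the set of pairs $(\rho_0, \eta)$ where $\rho_0$ is such a lift with $\det \rho_0 = \psi$ and $\eta : G_K \to 1 + \mathfrak{m}_R$ is an unramified character; the inverse sends $(\rho_0, \eta)$ to $\rho_0 \otimes \eta$. This bijection is functorial in $R$. Now the functor $R \mapsto \{\text{unramified characters } G_K \to 1+\mathfrak{m}_R\}$ is pro-represented by $\cO[[X]]$, with universal character sending $\mathrm{Frob}$ to $1+X$. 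Thus by Yoneda, the functor pro-represented by $R^{\square}_{\rbar,\lambda,\tau}$ is canonically isomorphic to the functor pro-represented by $R^{\square,\psi}_{\rbar,\lambda,\tau} \,\widehat{\otimes}_{\cO}\, \cO[[X]] = R^{\square,\psi}_{\rbar,\lambda,\tau}[[X]]$, yielding the desired isomorphism.

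The only subtle point is checking that twisting by an unramified character preserves both Hodge and inertial types; the latter is immediate from $\eta|_{I_K} = 1$, and the former follows from the fact that unramified characters are crystalline with all Hodge--Tate weights equal to zero, so neither $\HT_\tau(\rho)$ nor the Weil--Deligne inertial type is altered. Everything else is a direct manipulation of the representing functors.
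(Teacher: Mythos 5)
Your argument is essentially the same as the paper's: the key observation in both cases is that since $p>n$ the $n$-th power map is a bijection on $1+\m_R$, so the unramified twist $\theta=\psi^{-1}\det\rho$ has a unique unramified $n$-th root trivial mod $\m_R$, and this sets up the identification of $R^{\square}_{\rbar,\lambda,\tau}$ with $R^{\square,\psi}_{\rbar,\lambda,\tau}[[X]]$. One small imprecision in your write-up: $R^{\square}_{\rbar,\lambda,\tau}$ does not literally pro-represent a functor of ``lifts of Hodge type $\lambda$ and inertial type $\tau$'' on all complete local Noetherian $\cO$-algebras --- the Hodge and inertial type conditions are imposed only on characteristic-zero points and $R^{\square}_{\rbar,\lambda,\tau}$ is defined as the reduced, $p$-torsion-free quotient of $R^{\square}_{\rbar}$ cut out by Zariski closure of those points (Proposition \ref{prop: existence of local deformation rings}). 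So the Yoneda step should be run on the honest lifting rings $R^{\square}_{\rbar} \cong R^{\square,\psi}_{\rbar}[[X]]$, after which one checks (using the characterization by $E'$-points, exactly as you do with the root extraction at $E'$-points) that this isomorphism identifies the two potentially crystalline quotients. The paper's proof is terser but is running the same argument, working directly at the level of $E'$-points and leaving the functorial bookkeeping implicit.
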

\begin{proof}Consider a finite extension $E'/E$ and a point
  $x:R_{\rbar,\lambda,\tau}^{\square}\to E'$ with corresponding
  representation $\rho'$. Let $\theta=\psi^{-1}\det\rho'$, so that
  $\theta$ is an unramified character with trivial reduction. Since
  $p>n$, we see that $\theta$ has an $\cO_{E'}$-valued $n$-th
  root. From this observation is it easy to see that if
  $\rho^\psi:G_K\to\GL_n(R_{\rbar,\lambda,\tau}^{\square,\psi})$ is
  the universal lifting of determinant $\psi$, and $\mu_x$ is the
  unramified character taking a Frobenius element to $x$, then
  $\rho^\psi\otimes\mu_{1+X}\circ\det:G_K\to\GL_n(R_{\rbar,\lambda,\tau}^{\square,\psi}[[X]])$
  is the universal lifting of arbitrary determinant, as required.
\end{proof}
\section{Global patching arguments}\label{sec: patching for
  GLn} Our goal in this section is to
employ the Taylor--Wiles--Kisin patching method so as to generalize, to
the extent possible, the results of Section~\ref{sec:GL2 Qp} to the
$n$-dimensional context. These arguments are essentially the natural
$n$-dimensional generalisation of the arguments of Section 4 of
\cite{geekisin}. We closely follow the approaches of \cite{kisinfmc}
and \cite{jack} (which in turn follows \cite{blgg} and \cite{cht}). In
particular, in the actual implementation of the patching method we
follow \cite{jack} very closely, although, because our ultimate
interests are local, we will sometimes make stronger global
assumptions in order to simplify the arguments; these stronger
assumptions can always be achieved in our applications.  Before
getting to the patching argument itself, we include a number of
preliminary subsections in which we briefly recall the necessary
background material on automorphic forms and Galois representations,
referring the reader to \cite{jack} for more details.

\subsection{Basic set-up}\label{subsec:basics}
We put ourselves in the setting of Section~\ref{sec: BM conjecture for
  GLn}, so that $K/\Qp$ is a finite extension, and
$\rbar:G_K\to\GL_n(\F)$ is a continuous representation. We also
assume from now on that $p>2$.

As in Section \ref{sec:GL2 Qp}, we begin by globalising $\rbar$.
Since the standard global context in which to study higher-dimensional
Galois representations is that of automorphic forms on unitary
groups, we briefly recall the various concepts that are required
to discuss Galois representations in that setting.

To begin with,
we recall from \cite{cht} that $\cG_n$ denotes the group scheme over $\Z$ defined to be the semidirect
product of $\GL_n\times\GL_1$ by the group $\{1,j\}$, which acts on
$\GL_n\times\GL_1$ by \[j(g,\mu)j^{-1}=(\mu\cdot{}^tg^{-1},\mu).\]We have a
homomorphism $\nu:\cG_n \to \GL_1,$ sending $(g,\mu)$ to $\mu$ and $j$
to $-1$. We refer the reader to Section 2.1 of \cite{cht} for a thorough discussion
of $\cG_n$, and of the relationship between $\cG_n$-valued
representations and essentially conjugate self-dual $\GL_n$-valued
representations.

\begin{terminology}
\label{term:restrictions}
{\em
To ease notation, we adopt the following convention with regard to Galois
representations with values in $\cG_n(\Fbar_p)$: if $F$ is an imaginary CM
field with maximal totally real subfield~$F^+$, and $\rhobar:G_{F^+}
\to \cG_n(\Fbar_p)$ is a continuous representation with $\rhobar(G_F)\subset\GL_n(\Fpbar)\times\GL_1(\Fpbar)$,
then we write $\rhobar|_{G_F}$ for
the restriction of $\rhobar$ to $G_F$, regarded as a representation
$G_F\to\GL_n(\Fpbar)$, and similarly for $\rhobar(G_{F(\zeta_p)})$ and
$\rhobar|_{G_{F_\tv}}$ (for places $\tv$ of $F$).
}
\end{terminology}

Recall that the notion of an \emph{adequate} subgroup of
$\GL_n(\Fpbar)$ is defined in \cite{jack}. We will not need the
details of the definition; some examples of representations whose
image is adequate will be constructed in Appendix~\ref{sec:local to
  global}.

We now state our basic hypothesis related to the globalization of $\rbar$.
Namely,
we assume that there is an imaginary CM field $F$ with
maximal totally real subfield~$F^+$, together with a continuous
representation $\rhobar:G_{F^+}\to\cG_n(\Fpbar)$, such that
\begin{itemize}
\item $F/F^+$ is unramified at all finite places,
\item $[F^+:\Q]$ is divisible by $4$,
\item every place $v|p$ of $F^+$ splits in $F$,
\item $\rhobar$ is automorphic in the sense of Definition \ref{defn:
    max ideal or mod p Galois rep is automorphic} below; in
  particular, $\nu\circ\rhobar=\varepsilonbar^{1-n}\delta_{F/F^+}^n$, where
  $\delta_{F/F^+}$ is the quadratic character corresponding to
  $F/F^+$,
\item $\rhobar^{-1}(\GL_n(\Fpbar)\times\GL_1(\Fpbar))=G_F$,
\item $\rhobar$ is unramified at primes $v\nmid p,$
\item $\rhobar(G_{F(\zeta_p)})$ is adequate, so that in particular
  $\rhobar|_{G_F}$ is irreducible,
\item $\overline{F}^{\ker\ad\rhobar|_{G_F}}$ does not contain
  $F(\zeta_p)$, and
\item for each place $v|p$ of $F^+$, there is a place $\tv$ of $F$
  lying over $v$ such that $F_{\tv}\cong K$ and $\rhobar|_{G_{F_\tv}}$
  is isomorphic to $\rbar$.
\end{itemize}
We say that such an ($F$ and) $\rhobar$ is a \emph{suitable globalization} of $\rbar$.

\begin{remark}
{\em
It will not always be the case that such a representation $\rhobar$
exists, because the assumption that $\rhobar(G_{F(\zeta_p)})$ is
adequate implies that $p\nmid n$. On the other hand, if we assume
that $p\nmid n$ and that Conjecture \ref{conj: existence of local
  crystalline lifts} holds for $\rbar$, then by Corollary \ref{cor: the final
local-to-global result} there is a suitable globalization of $\rbar$.
}
\end{remark}

We briefly explain the motivation for the various conditions that we require.
The first three ensure the existence of a convenient unitary group on which to work,
with the property that it is isomorphic to $\GL_n$ at places dividing~$p$.
The final condition ensures that $\rhobar$ can be used to study
$\rbar$. The remaining conditions are imposed in order to use patching
constructions of \cite{jack}; some of them are imposed in order to
simplify these constructions.  (When considering the last three conditions,
the reader should recall our terminological convention of~(\ref{term:restrictions}).)

\subsection{Unitary groups and algebraic automorphic forms}\label{sec: Unitary groups}
There is a reductive algebraic group $G/F^+$ with the following
properties (cf. Section 6 of \cite{jack}):
\begin{itemize}
\item $G$ is an outer form of $\GL_n$, with $G_{/F}\cong\GL_{n/F}$.
\item If $v$ is a finite place of $F^+$, $G$ is quasi-split at $v$.
\item If $v$ is an infinite place of $F^+$, then $G(F^+_v)\cong U_n(\R)$.
\end{itemize}
As in section 3.3 of \cite{cht} we
  may define a model for $G$ over
$\cO_{F^+}$. If $v$ is a place of $F^+$ which splits as $ww^c$ over
$F$, then we have an
isomorphism \[\iota_w:G(\cO_{F_v^+})\isoto \GL_n(\cO_{F_w}).\] 

Let $E/\Qp$ be a finite
extension with ring of integers $\cO$ and residue field $\F$, which we
assume is chosen to be sufficiently large that $\rhobar$ is valued in
$\cG_n(\F)$. From now on we will often regard $\rhobar|_{G_F}$
as being valued in $\GL_n(\F)$, and we write $V_\F$ for the underlying
$\F$-vector space of $\rhobar|_{G_F}$.

  Let $S_p$ denote the set of places of $F^+$ lying over $p$, and for
  each $v\in S_p$ fix a place $\tv$ of $F$ lying over $v$. Let
  $\tilde{S}_p$ denote the set of places $\tilde{v}$ for $v\in
  S_p$. Write $F^+_p:=F\otimes_\Q\Qp$, $\cO_{F^+_p}:=\cO_F\otimes_\Z\Zp$.
  Let $W$ be a finite $\cO$-module with an action of $G(\cO_{F^+_p})$,
  and let $U\subset G(\A_{F^+}^\infty)$ be a compact open subgroup
  with the property that for each $u\in U$, if $u_p$ denotes the
  projection of $u$ to $G(F_p^+)$, then $u_p\in G(\cO_{F^+_p})$. We
  say that $U$ is \emph{sufficiently small} if for all $t\in
  G(\A_{F^+}^\infty)$, $t^{-1}G(F^+)t\cap U$ does not contain an
  element of order $p$.  Let $S(U,W)$ denote the space of algebraic
  modular forms on $G$ of level $U$ and weight $W$, i.e. the space of
  functions \[f:G(F^+)\backslash G(\A_{F^+}^\infty)\to W\] with
  $f(gu)=u_p^{-1}f(g)$ for all $u\in U$. If $U$ is sufficiently small,
  then the functor $W\mapsto S(U,W)$ is exact.

For each $v \in S_p$ choose an inertial type $\tau_v:I_{F_v}\to\GL_n(E)$ and
a weight $\lambda_v\in(\Z^n_+)^{\Hom_\Qp(F_\tv,E)}$, and let
$L_{\lambda_v,\tau_v}$ be the $\cO$-representation of
$\GL_n(\cO_{F_\tv})$ defined in Section \ref{sec: BM
  conjecture for GLn}. Write $L_{\lambda,\tau}$ for the tensor product of
the $L_{\lambda_v,\tau_v}$, regarded as a representation of  $G(\cO_{F^+,p})$ by
letting  $G(\cO_{F^+,p})$ act on $L_{\lambda_v,\tau_v}$ via $\iota_\tv$, and for
any $\cO$-algebra $A$ we write $S_{\lambda,\tau}(U,A)$ for
$S(U,L_{\lambda,\tau}\otimes_\cO A)$.

\subsection{Hecke algebras and Galois representations}\label{sec: Galois repns}

This assumption made above that $\overline{F}^{\ker\ad\rhobar|_{G_F}}$
does not contain $F(\zeta_p)$ means that we can and do choose a finite
place $v_1\notin S_p$ of $F^+$ which splits over $F$ such that $v_1$
does not split completely in $F(\zeta_p)$, and
$\ad\rhobar|_{G_F}(\Frob_{v_1})=1$. (We make this last assumption in
order to simplify the deformation theory at $v_1$; in particular these
assumptions will imply that the local unrestricted lifting ring at
$v_1$ is smooth, and that all liftings are unramified.)
Let  $U=\prod_vU_v$ be a compact open
  subgroup of $G(\A_{F^+}^\infty)$ with $U_v$ a
compact open subgroup of $G(F^+_v)$ such that:
\begin{itemize}
\item $U_v= G(\bigO_{F^+_v})$ for all $v$ which split in $F$ other
  than $v_1$;
 
   \item  $U_{v_1}$ is  the preimage of the upper triangular  matrices under 
\[G(\cO_{F^+_{v_1}})\to G(k_{v_1}) \underset{\iota_{w_1}}\iso \GL_n(k_{v_1})\]
where $w_1$ is a place of $F$ over $v_1$;
  \item $U_v$ is a hyperspecial maximal compact subgroup of $G(F_v^+)$
    if $v$ is inert in $F$.
\end{itemize}
Then $U$ is sufficiently small (by the choice of $U_{v_1}$). Let
$T=S_p\cup \{v_1\}$. We let $\mathbb{T}^{T,\univ}$ be the commutative
$\bigO$-polynomial algebra generated by formal variables $T_w^{(j)}$
for all $1\le j\le n$, $w$ a place of $F$ lying over a place $v$ of
$F^+$ which splits in $F$ and is not contained in $T$. The algebra
$\mathbb{T}^{T,\univ}$ acts on $S_{\lambda,\tau}(U,\cO)$ via the
Hecke operators
  \[ T_{w}^{(j)}:=  \iota_{w}^{-1} \left[ \GL_n(\mc{O}_{F_w}) \left( \begin{matrix}
      \varpi_{w}1_j & 0 \cr 0 & 1_{n-j} \end{matrix} \right)
\GL_n(\mc{O}_{F_w}) \right] 
\] for $w\not \in T$ and $\varpi_w$ a uniformiser in
$\mc{O}_{F_w}$.
We denote by $\mathbb{T}^T_{\lambda,\tau}(U,\cO)$ the image of
$T^{T,\univ}$ in $\End_\cO(S_{\lambda,\tau}(U,\cO))$.

\begin{defn}\label{defn: max ideal or mod p Galois rep is
    automorphic}We say that a maximal ideal $\mf{m}$ of
  $\mathbb{T}^{T,\univ}$ with residue field of characteristic $p$ is
\emph{automorphic} if for some $(\lambda,\tau)$ as above we have
$S_{\lambda,\tau}(U,\cO)_{\mf{m}} \neq 0.$ We say that a
representation $\rhobar:G_{F^+}\to\cG_n(\Fpbar)$ is \emph{automorphic}
if there is an automorphic maximal ideal  $\mf{m}$ of
$\mathbb{T}^{T,\univ}$ such that  if $v\notin T$ is a place of $F^+$
which splits as $v=ww^c$ in $F$, then $\rhobar|_{G_F}(\Frob_w)$ has
characteristic polynomial equal to the image of
$X^n+\dots+(-1)^j(\mathbf{N}w)^{j(j-1)/2}T_w^{(j)}X^{n-j}+\dots(-1)^n(\mathbf{N}w)^{n(n-1)/2}T_w^{(n)}$. 
\end{defn}
In the following we will make a number of arguments that are vacuous
unless $S_{\lambda,\tau}(U,\cO)_{\mf{m}} \neq 0$ for the particular
$(\lambda,\tau)$ under consideration, but for technical reasons we do
not make this assumption.  Since $\rhobar$ is a suitable globalization
of $\rbar$ in the sense of Section~\ref{sec: patching for
  GLn}, it is automorphic by
assumption, and there is a maximal ideal $\mf{m}$ of
$\mathbb{T}^{T,\univ}$  associated to $\rhobar$ as in
Definition \ref{defn: max ideal or mod p Galois rep is
  automorphic}. Let $G_{F^+,T}:=\Gal(F(T)/F^+)$, where $F(T)$ is the
maximal extension of $F$ unramified outside of $T$ and infinity.
\begin{prop}\label{existence of repns} 
There is a unique continuous
lift \[\rho_\mf{m}:G_{F^+,T}\to\cG_n(\mathbb{T}^T_{\lambda,\tau}(U,\cO)_\mf{m})\]of
$\rhobar$, which satisfies
\begin{enumerate}
\item $\rho_{\mf{m}}^{-1}((\GL_n\times\GL_1)(\mathbb{T}^T_{\lambda,\tau}(U,\cO)_\mf{m}))=G_F$.
\item $\nu\circ\rho_\mf{m}=\varepsilon^{1-n}\delta_{F/F^+}^n$.
\item\label{item:char poly of universal Hecke deformation} $\rho_\mf{m}$ is unramified outside $T$. If $v\notin T$ splits
  as $ww^c$ in $F$ then $\rho_\mf{m}(\Frob_w)$ has characteristic
  polynomial \[X^n+\dots+(-1)^j(\mathbf{N}w)^{j(j-1)/2}T_w^{(j)}X^{n-j}+\dots(-1)^n(\mathbf{N}w)^{n(n-1)/2}T_w^{(n)}.\]
\item \label{item:local at l=p behaviour of universal Hecke
    deformation} For each place $v\in S_p$, and each $\cO$-algebra
  homomorphism
 $$x:\mathbb{T}^T_{\lambda,\tau}(U,\cO)_\mf{m}\to E',$$
  where $E'/E$ is a finite extension, the representation
  $x\circ\rho_\mf{m}|_{G_{F_\tv}}$ is potentially crystalline of Hodge
  type $\lambda_v$ and inertial type $\tau_v$.
\end{enumerate}
\end{prop}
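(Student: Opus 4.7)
The plan is to construct $\rho_\mf{m}$ by first attaching, to each minimal prime of $\T^T_{\lambda,\tau}(U,\cO)_\mf{m}$, a Galois representation coming from an automorphic form; then assembling these representations on the total ring of fractions; and finally descending to $\T^T_{\lambda,\tau}(U,\cO)_\mf{m}$ itself, exploiting the absolute irreducibility of $\rhobar|_{G_F}$ that follows from the adequacy of $\rhobar(G_{F(\zeta_p)})$.

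First I would fix a minimal prime $\fp$ of $\T^T_{\lambda,\tau}(U,\cO)_\mf{m}$. The associated Hecke eigensystem $\T^{T,\univ}\to\kappa(\fp)$ appears in $S_{\lambda,\tau}(U,\cO)_\mf{m}\otimes_\cO\kappa(\fp)$, and via the standard Jacquet--Langlands and solvable base change machinery (cf.\ \cite{cht},\cite{jack}) matches a regular algebraic, essentially conjugate self-dual, cuspidal automorphic representation $\Pi_\fp$ of $\GL_n(\A_F)$ whose infinitesimal character is prescribed by $\lambda$ and whose local component at each place above $p$ has inertial type $\tau_v$. To $\Pi_\fp$ one attaches a continuous representation $r_\fp:G_F\to\GL_n(\Qpbar)$ by the results of Shin and Chenevier--Harris; it is unramified outside~$T$, realises the Frobenius characteristic polynomial formula of~(\ref{item:char poly of universal Hecke deformation}) by Chebotarev, and, by the local--global compatibility results of Caraiani together with the work of Barnet-Lamb--Gee--Geraghty--Taylor, the restriction $r_\fp|_{G_{F_\tv}}$ is potentially crystalline of Hodge type $\lambda_v$ and inertial type $\tau_v$. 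Its essentially conjugate self-duality, with similitude character forced to be $\varepsilon^{1-n}\delta_{F/F^+}^n$ by the corresponding property of $\rhobar$, allows $r_\fp$ to be extended to a $\Gn(\Qpbar)$-valued lift of $\rhobar$.

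Taking the product of these extensions produces a continuous representation $G_{F^+,T}\to\Gn\bigl(\prod_\fp\kappa(\fp)\bigr)$ whose trace on $G_F$ lies, by Chebotarev and condition~(\ref{item:char poly of universal Hecke deformation}), in the image of $\T^T_{\lambda,\tau}(U,\cO)_\mf{m}$. Since $\rhobar|_{G_F}$ is absolutely irreducible, the Chenevier determinant (equivalently, Taylor pseudo-representation) attached to the collection $\{r_\fp\}$ has coefficients in $\T^T_{\lambda,\tau}(U,\cO)_\mf{m}$, and the standard Carayol-type argument, in the $\Gn$-valued form developed in~\cite{cht}, upgrades it to a genuine continuous representation $\rho_\mf{m}:G_{F^+,T}\to\Gn(\T^T_{\lambda,\tau}(U,\cO)_\mf{m})$ lifting $\rhobar$. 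Uniqueness of $\rho_\mf{m}$ then follows from Chebotarev together with residual absolute irreducibility. Properties~(1)--(3) are immediate from the construction; for~(4), any $\cO$-algebra homomorphism $x:\T^T_{\lambda,\tau}(U,\cO)_\mf{m}\to E'$ factors through $\T^T_{\lambda,\tau}(U,\cO)_\mf{m}/\fp$ for some minimal~$\fp$, so $x\circ\rho_\mf{m}|_{G_{F_\tv}}$ is conjugate to $r_\fp|_{G_{F_\tv}}$, which already has the required Hodge and inertial types.

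The main obstacle will be the descent step itself: because $\T^T_{\lambda,\tau}(U,\cO)_\mf{m}$ need not be reduced, one cannot simply glue the various $r_\fp$, and the argument has to be phrased in terms of the associated pseudo-representation (or Chenevier determinant), using the absolute irreducibility of $\rhobar|_{G_F}$ to lift this back to an honest $\Gn$-valued representation. All of the other inputs --- in particular the local--global compatibility at places $v\in S_p$ with fixed Hodge and inertial types --- are by now available in the literature in the generality we need here, so no genuinely new ingredient is required beyond a careful bookkeeping exercise.
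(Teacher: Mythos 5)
Your proposal takes essentially the same approach as the paper, whose proof is a one-line reference to Proposition~3.4.4 of \cite{cht} (supplemented by Labesse's base change, local--global compatibility from \cite{blggtlocalglobalII}, and the inertial local Langlands statement Theorem~\ref{thm: inertial local Langlands, N=0}). Your sketch simply spells out what the CHT argument does in this setting: attach a Galois representation to each minimal prime of $\mathbb{T}^T_{\lambda,\tau}(U,\cO)_\mf{m}$ via base change and the results of Shin and Chenevier--Harris, verify properties~(1)--(4) using local--global compatibility, and descend from the total ring of fractions using pseudo-representations (or Chenevier determinants) and the absolute irreducibility of $\rhobar|_{G_F}$ to produce the $\cG_n$-valued lift, as in \cite[Lem.~2.1.12]{cht}.

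One point you elide, and which the paper explicitly flags: to pass from the fact that $\Pi_v|_{\GL_n(\cO_{F_{\tv}})}$ contains $\sigma(\tau_v)$ (which is all that the non-vanishing of $S_{\lambda,\tau}(U,\cO)_\mf{m}$ directly gives you) to the conclusion that $r_\fp|_{G_{F_\tv}}$ actually has inertial type $\tau_v$ with $N=0$, you must invoke Theorem~\ref{thm: inertial local Langlands, N=0}, and that theorem carries a \emph{purity} hypothesis on the Weil--Deligne representation. Purity is not automatic; it is the content of Theorem~1.2 of~\cite{ana} (building on \cite{shin} and \cite{chenevierharris}). Without this input the ``if and only if'' in Theorem~\ref{thm: inertial local Langlands, N=0} is simply unavailable and item~(\ref{item:local at l=p behaviour of universal Hecke deformation}) does not follow from the type-theoretic data alone. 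Everything else in your sketch is in order.
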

\begin{proof} This may be proved in the same way as Proposition 3.4.4
  of \cite{cht}, making use of Corollaire 5.3 of~\cite{labesse},
  Theorem 1.1 of~\cite{blggtlocalglobalII}, and
 Theorem~\ref{thm: inertial local Langlands, N=0}. (Note that the
 purity assumption in Theorem~\ref{thm: inertial local Langlands, N=0}
 holds by Theorem~1.2 of~\cite{ana}, which builds on the results of
 \cite{shin} and \cite{chenevierharris}.)
\end{proof}
We note that our assumptions on $v_1$ imply that
$S_{\lambda,\tau}(U,\cO)_\m\otimes_{\Z_p}\Q_p$ is free over
$\mathbb{T}^T_{\lambda,\tau}(U,\cO)_\m[1/p]$ of rank $n!$ (where we
adopt the convention that this remark is true if both
$\mathbb{T}^T_{\lambda,\tau}(U,\cO)_\m[1/p]$ and
$S_{\lambda,\tau}(U,\cO)_\m\otimes_{\Z_p}\Q_p$ are zero). (This just
comes from the fact that the Iwahori invariants of an unramified
principal series representation of $\GL_n$ have dimension $n!$.)
\subsection{Deformations to $\cG_n$}
 Let $S$ be a set of places of $F^+$ which split in $F$, with
 $S_p\subseteq S$. As in
\cite{cht}, we will write $F(S)$ for the maximal extension of $F$
unramified outside $S$ and infinity, and from now on we will write
$G_{F^+,S}$ for $\Gal(F(S)/F^+)$. Regard $\rhobar$ as a representation
of $G_{F^+,S}$. We will freely make use of the terminology (of
liftings, framed liftings etc) of Section 2 of
\cite{cht}.

Choose a place $\tv_1$ of $F$ above $v_1$. Let $\tT$ denote the set of
places $\tv$, $v\in T$. For each $v\in T$, we let $R_\tv^\square$
denote the reduced and $p$-torsion free quotient of the universal
$\cO$-lifting ring of $\rhobar|_{G_{F_\tv}}$. For each $v\in S_p$, write
$R_\tv^{\square,\lambda_v,\tau_v}$ for
$R_{\rhobar|_{G_{F_\tv}},\lambda_v,\tau_v}^{\square}$.

Consider (in the terminology of \cite{cht}) the deformation problem
\[\cS:=(F/F^+,T,\tT,\cO,\rhobar,\varepsilon^{1-n}\delta_{F/F^+}^n,\{R_{\tv_1}^\square\}\cup\{R_\tv^{\square,\lambda_v,\tau_v}\}_{v\in S_p}).\]
There is a corresponding universal
deformation $\rho_\cS^{\univ}:G_{F^+,T}\to\cG_n(R_\cS^{\univ})$ of~$\rhobar$. In addition, there is a universal $T$-framed deformation ring
$R_\cS^{\square_T}$ in the sense of Definition 1.2.1 of~\cite{cht},
which parameterises deformations of $\rhobar$ of type $\cS$ together
with particular local liftings for each $\tv\in\tT$.
The lifting of Proposition \ref{existence of repns} and the universal
property of $\rho_\cS^{\univ}$ gives an
$\cO$-homomorphism \[R_\cS^{\univ}\onto
\mathbb{T}^T_{\lambda,\tau}(U,\cO)_\mf{m},\]which is surjective by
property (\ref{item:char poly of universal Hecke deformation}) of
$\rho_\mf{m}$.
\subsection{Patching}\label{subsec:patching} We now follow the proof
of Theorem 6.8 of \cite{jack}. We wish to consider auxiliary sets of
primes in order to apply the Taylor--Wiles--Kisin patching method. Since
$\rhobar(G_{F(\zeta_p)})$ is adequate by assumption, by Proposition 4.4
of \cite{jack} we see that we can (and do)
  choose an integer $q\ge[F^+:\Q]n(n-1)/2$ and for each $N\ge 1$ a tuple
  $(Q_N,\tQ_N,\{\psibar_\tv\}_{v\in Q_N})$ such that
\begin{itemize}
\item $Q_N$ is a finite set of finite places of $F^+$ of cardinality $q$ which is disjoint
  from $T$ and consists of places which split in $F$;
\item $\tQ_N$ consists of a single place $\tv$ of $F$ above each place
  $v$ of $Q_N$;
\item $\mathbf{N}v \equiv 1 \mod p^N$ for $v \in Q_N$;
\item for each $v\in Q_N$,
  $\rhobar|_{G_{F_\tv}}\cong\sbar_\tv\oplus\psibar_\tv$ where $\psibar_\tv$ is
  an eigenspace of Frobenius corresponding to an eigenvalue
  $\alpha_v$, on which Frobenius acts semisimply. Write $d_N^v=\dim\psibar_\tv$.
\end{itemize}
For each $v\in Q_N$, let $R^{\psibar_\tv}_{\tv}$ denote the
quotient of $R^\square_{\tv}$ corresponding to lifts
$r:G_{F_\tv}\to\GL_n(A)$ which are
$\ker(\GL_n(A)\to\GL_n(k))$-conjugate to a lift of the form
$s\oplus\psi$, where $s$ is an unramified lift of $\sbar_{\tv}$ and
$\psi$ is a lift of $\psibar_\tv$ for which the image of inertia is
contained in the scalar matrices. We let $\cS_{Q_N}$ denote the
deformation problem 
\begin{multline*}
\cS_{Q_N} := (F/F^+,\, T\cup Q_N,\, \tT\cup
\tQ_N,\, \cO,\, \rhobar,\, \varepsilon^{1-n}\delta_{F/F^+}^n, \\
 \{R_{\tv_1}^\square\}
 \cup\{R_\tv^{\square,\lambda_v,\tau_v}\}_{v\in S_p}
 \cup\{R_{\tv}^{\psibar_\tv}\}_{v\in Q_N}).
\end{multline*}
We let $R_{\cS_{Q_N}}^{\univ}$
denote the corresponding universal deformation ring, and we let
$R^{\square_T}_{\cS_{Q_N}}$ denote the corresponding universal $T$-framed deformation
ring.
We define \[ R^{\loc} := \left(\widehat{\otimes}_{v \in
  S_p}R_\tv^{\square,\lambda_v,\tau_v}\right)\widehat{\otimes}{R}^{\square}_{\tv_1} \]
where all completed tensor products are taken over $\mc{O}$. By Proposition 4.4 of \cite{jack}, we may also assume that 
\begin{itemize}
\item the ring $R^{\square_T}_{\mc{S}_{Q_N}}$ can be topologically
  generated over $R^{\loc}$ by $q-[F^+:\Q]n(n-1)/2$ elements.
\end{itemize}
Let $U_0(Q_N)=\prod_v U_0(Q_N)_v$ and $U_1(Q_N)=\prod_v U_1(Q_N)_v$ be
the compact open subgroups of $G(\A_{F^+}^\infty)$ defined by (for
$i=0$, $1$) $U_i(Q_N)_v=U_v$ if $v\notin Q_N$, and
$U_0(Q_N)_v=\iota_{\tv}^{-1}\p_N^\tv$,
$U_1(Q_N)_v=\iota_{\tv}^{-1}\p_{N,1}^\tv$ if $v\in Q_N$, where
$\p_N^\tv$ and $\p_{N,1}^\tv$ are the parahoric subgroups defined in
\cite{jack}, corresponding to the partition $n=(n-d_N^\tv)+d_N^\tv$.
We have natural maps \[ \bb{T}_{\lambda,\tau}^{T\cup
  Q_N}(U_{1}(Q_N),\mc{O}) \onto \bb{T}_{\lambda,\tau}^{T\cup
  Q_N}(U_{0}(Q_N),\mc{O}) \onto \bb{T}_{\lambda,\tau}^{T\cup
  Q_N}(U,\mc{O}) \into \bb{T}_{\lambda,\tau}^T(U,\mc{O}).\] Thus
$\mf{m}$ determines maximal ideals of the first three algebras in this
sequence which we denote by $\mf{m}_{Q_N}$ for the first two and
$\mf{m}$ for the third. Note also that $\bb{T}_{\lambda,\tau}^{T \cup
  Q_N}(U,\mc{O})_{\mf{m}} =
\bb{T}_{\lambda,\tau}^T(U,\mc{O})_{\mf{m}}$ by the proof of Corollary
3.4.5 of \cite{cht}.
For each $v \in Q_N$ choose an element
$\phi_{\wt{v}} \in G_{F_{\wt{v}}}$ lifting the geometric Frobenius
element of $G_{F_{\tv}}/I_{F_{\tv}}$ and
let $\varpi_{\wt{v}} \in \mc{O}_{F_{\wt{v}}}$ be the uniformiser with
$\Art_{F_{\wt{v}}}\varpi_{\wt{v}}=\phi_{\wt{v}}|_{F_{\wt{v}}^{\ab}}$. As
in Proposition 5.9 of \cite{jack} there are commuting projection
operators $\pr_{\varpi_{\tv}}\in\End_{\cO}(S_{\lambda,\tau}(U_i(Q_N),\cO)_{\m_{Q_N}})$. Write $M=S_{\lambda,\tau}(U,\cO)_\m$, and for $i=0$, $1$ we
write \[M_{i,Q_N}=\left(\prod_{v\in
  Q_N}\pr_{\varpi_\tv}\right)S_{\lambda,\tau}(U_i(Q_N),\cO)_{\m_{Q_N}}.\]Let
$\bb{T}_{i,Q_N}$ denote the image of $\bb{T}_{\lambda,\tau}^{T\cup
  Q_N}(U_i(Q_N),\cO)$ in $\End_\cO(M_{i,Q_N})$,
let $\Delta_{Q_N}=U_0(Q_N)/U_1(Q_N)$, and let $\mf{a}_{Q_N}$
denote the kernel of the
augmentation map $$\mc{O}[\Delta_{Q_N}] \rightarrow \mc{O}.$$
For
$i=0,1$ and $\alpha \in F_{\wt{v}}$ of non-negative valuation, we have
the Hecke operator \[V_\alpha=\iota^{-1}_\tv\left(\left[\p_{N,1}^\tv
    \begin{pmatrix}
      1_{n-d_N^\tv}&0\\0&A_\alpha
    \end{pmatrix}\p_{N,1}^\tv\right]\right),\]where
$A_\alpha=\diag(\alpha,1,\dots,1)$. Exactly as in the proof of Theorem
6.8 of \cite{jack}, we have:
\begin{enumerate}
\item The  map
\[\bigl(\prod_{v\in
  Q_N}\pr_{\varpi_\tv}\bigr):M\to M_{i,Q_N}  \]
is an isomorphism.
\item $M_{1,Q_N}$ is free over $\mc{O}[\Delta_{Q_N}]$ with
\[ M_{1,Q_N}/\mf{a}_{Q_N}\isoto M_{0,Q_N}. \]
\item For each $v \in Q_N$, there is a character with
  open kernel $V_{\wt{v}} : F_{\wt{v}}^{\times} \rightarrow
  \bb{T}_{1,Q_N}^{\times}$ so that
  \begin{enumerate}
  \item for each $\alpha \in F_{\wt{v}}$ of non-negative valuation,
    $V_{\alpha}= V_{\wt{v}}(\alpha)$ on $M_{1,Q_N}$;
  \item $(\rho_{\mf{m}_{Q_N}}\otimes_{\bb{T}_{\lambda,\tau}^{T\cup
  Q_N}(U_i(Q_N),\cO)_{\m_{Q_N}}} \bb{T}_{1,Q_N})|_{W_{F_{\wt{v}}}}
    \cong s\oplus\psi$ with $s$ an unramified lift of $\sbar_\tv$ and
    $\psi$ a lift of $\psibar_{\wt{v}}$ with $I_{F_\tv}$ acting on
    $\psi$ via the scalar $(V_{\wt{v}}\circ \Art_{F_{\wt{v}}}^{-1})$.
  \end{enumerate}
\end{enumerate}
 The above shows, in particular, that 
the lift $\rho_{\mf{m}_{Q_N}} \otimes \bb{T}_{1,Q_N}$ of $\rhobar$ is of
type $\mc{S}_{Q_N}$ and gives rise to a surjection
$R^{\univ}_{\mc{S}_{Q_N}} \onto \bb{T}_{1,Q_N}$. 
Thinking of $\Delta_{Q_N}$ as the product of the inertia subgroups in
the maximal abelian $p$-power order quotient of $\prod_{v \in Q_N}
G_{F_{\wt{v}}}$, we obtain a homomorphism $\Delta_{Q_N} \rightarrow
(R^{\univ}_{\mc{S}_{Q_N}})^{\times}$ by considering $\psi$ as above in
some basis. We thus have homomorphisms
$$\mc{O}[\Delta_{Q_N}] \rightarrow 
R^{\univ}_{\mc{S}_{Q_N}} \rightarrow R^{\square_T}_{\mc{S}_{Q_N}}$$
and natural isomorphisms $R^{\univ}_{\mc{S}_{Q_N}}/\mf{a}_{Q_N} \cong
R^{\univ}_{\mc{S}}$ and $R^{\square_T}_{\mc{S}_{Q_N}}/\mf{a}_{Q_N}
\cong R^{\square_T}_{\mc{S}}$, and the surjection
$R^{\univ}_{\mc{S}_{Q_N}} \onto \bb{T}_{1,Q_N}$ is a homomorphism of
$\cO[\Delta_{Q_N}]$-algebras.

Fix a filtration by $\F$-subspaces
\[0=L_0\subset L_1\subset\dots\subset L_s=L_{\lambda,\tau}/\pi L_{\lambda,\tau}\]
such that each
$L_i$ is $G(\cO_{F^+,p})$-stable, and for each $i=0,1,\dots,s-1$, the quotient
$\sigma_i:=L_{i+1}/L_i$ is absolutely irreducible. This induces a
filtration \[0=M^{0}\subset M^{1}\subset\dots\subset M^{s}=M/\pi M.\]

We may now patch just as in the proof of 
\cite[Thm.~6.8]{jack}, keeping track of filtrations as in
\cite[\S~(2.2.9)]{kisinfmc}. (See also section 4.3 of~\cite{geekisin}
in the case $n=2$.) More precisely, we patch together the
$R^{\loc}[\Delta_{Q_N}]$-modules
$M_{1,Q_N}\otimes_{R_{\cS_{Q_N}}^\univ}R_{\cS_{Q_N}}^{\square_T}$ via
Lemma 6.10 of~\cite{jack}. Having made this construction, if we set \[\Rinfty:=\left(\widehat{\otimes}_{v \in
  S_p,\cO}R_\tv^{\square}\right)\widehat{\otimes}_\cO{R}^{\square}_{\tv_1}[[x_1,\dots,x_{q-[F^+:\Q]n(n-1)/2}]],\]
\begin{multline*}
\Rbarinfty :=
\left(\widehat{\otimes}_{v \in
  S_p,\cO}R_\tv^{\square,\lambda_v,\tau_v}\right)\widehat{\otimes}_\cO{R}^{\square}_{\tv_1}
[[x_1,\dots,x_{q-[F^+:\Q]n(n-1)/2}]]
\\
= R^{\loc}[[x_1,\dots,x_{q - [F^+:\Q]n(n-1)/2}]]
,
\end{multline*}
and
$$S_\infty := \cO[[z_1,\dots,z_{n^2\#T},y_1,\dots,y_q]],$$ for formal
variables $x_1,\dots,x_{q-[F^+:\Q]n(n-1)/2}$, $y_1,\dots,y_q$ and
$z_1,\dots,z_{n^2\#T}$, and if we let $\mf{a}$ denote the kernel of the
augmentation map $S_\infty\to\cO$, then we see that there exists:
\begin{itemize}
\item An $S_{\infty}$-module $M_{\infty}$ which is simultaneously an $\Rbarinfty$-module 
such that the image of $\Rbarinfty$ in $\End(M_{\infty})$
is an $S_{\infty}$-algebra. 
\item A filtration by $\Rbarinfty$-modules 
\[0=M_{\infty}^{0}\subset M_{\infty}^{1}\subset\dots\subset
M_{\infty}^{s}=M_{\infty}/\pi M_\infty\] 
whose graded pieces are finite free $S_{\infty}$-modules. 
\item A surjection of $R^{\loc}$-algebras $\Rbarinfty \rightarrow R^{\univ}_{\mc{S}}.$
\item An isomorphism of $\Rbarinfty$-modules
$$M_{\infty}/\mf{a}M_{\infty} \iso M,$$
which identifies 
$M^i_{\infty}/\mf{a}M^i_{\infty}$ with $M^i,$
and such that the induced $\Rbarinfty$-module structure on $M$ 
coincides with the $R^{\univ}_{\mc{S}}$-module structure on $M$,
via the surjection $\Rbarinfty \to R^{\univ}_{\mc{S}}$ mentioned
in the previous point.
\end{itemize}
We furthermore
claim that we can make the above construction so that, for each value of $i=1,2,\dots s,$ 
the $(\Rinfty,S_{\infty})$-bimodule structure on $M^i_{\infty}/M^{i-1}_\infty$
(arising from its $(\Rbarinfty,S_{\infty})$-bimodule structure, and the
surjection $\Rinfty\to \Rbarinfty$) and the isomorphism 
$M^i_{\infty}/(\mf{a}M^i_{\infty},M^{i-1}_\infty) \iso M^i/M^{i-1}$ depends only on ($U,\mf{m}$ and) the isomorphism class 
of $L_i/L_{i-1}$ as a $G(\cO_{F^+,p})$-representation, but not on $(\lambda,\tau).$ For any finite 
collection of pairs $(\lambda,\tau)$ this follows by the same finiteness argument used during patching. 
Since the set of $(\lambda,\tau)$ is countable, the claim follows from
a diagonalization argument.

It will be convenient in the following to refer to a tuple
$(a_v)_{\tv\in\tS_p}$ as a Serre weight, where each $a_v$ is a Serre
weight for $\GL_n(k_\tv)$, where $k_\tv$ is the residue field of
$F_\tv$. We write $F_a=\otimes_{\tv\in\tS_p}F_{a_v}$, a representation
of $G(\cO_{F^+,p})$. For $a$ a Serre weight, we denote by
$M^a_{\infty}$ the $\Rinfty/\pi$-module constructed
above (using the construction for all $(\lambda,\tau)$) when
$L_i/L_{i-1} \iso F_a,$ and we set
\[ \mu_a'(\rhobar) = (n!)^{-1}e(M^a_\infty,\Rinfty/\pi)\]
and \[ \cycle_a'(\rhobar) = (n!)^{-1}\cycle(M^a_\infty).\](Note that \emph{a
  priori} this is only a cycle with $\Q$-coefficients rather than
$\Z$-coefficients, but in the situations in which our theorems apply,
it will follow that it is in fact a cycle with $\Z$-coefficients.)

For each $v|p$ we
write \[(L_{\lambda_v,\tau_v}\otimes_\cO\F)^\semis\isoto\sum_{a_v}F_{a_v}^{n_{a_v}},\]
so
that \[(L_{\lambda,\tau}\otimes_\cO\F)^\semis\isoto\sum_{a}F_{a}^{n_{a}},\]where $n_a=\prod_vn_{a_v}$.

The following technical, but crucial, lemma is a slight refinement (and generalisation to the
$n$-dimensional setting) of Lemma~4.3.8 of \cite{geekisin}.
\begin{lem}\label{key patching lemma} For each $a,$
  $\mu'_a(\rhobar)$ is a non-negative integer. Moreover, for any fixed $(\lambda,\tau)$ the following conditions are equivalent: 

(1) The support of $M\otimes_{\Z_p}\Q_p$
meets every irreducible component of $\Spec R^{\loc}[1/p].$

(2) $M_{\infty}\otimes_{\Z_p}\Q_p$ is a faithfully flat $\Rbarinfty[1/p]$-module 
which is everywhere locally free of rank $n!.$

(3) $R^{\univ}_{\mc{S}}$ is a finite $\cO$-algebra and
$M\otimes_{\Z_p}\Q_p$ is a faithful $R^{\univ}_{\mc{S}}[1/p]$-module.  

(4) $e(\Rbarinfty/\pi) =  (n!)^{-1}\sum_a
n_ae(M^{a}_{\infty},\Rinfty/\pi) = \sum_a n_a\mu_{a}'(\rhobar).$

(5) $\cycle(\Rbarinfty/\pi) =  (n!)^{-1}\sum_a n_a\cycle(M^{a}_{\infty}) = \sum_a n_a\cycle_{a}'(\rhobar).$
\end{lem}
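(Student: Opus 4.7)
The plan hinges on two consequences of the patching construction: first, $M_\infty$ and each graded piece $M^i_\infty/M^{i-1}_\infty$ is finite free as an $S_\infty$-module; second, the dimensions match, $\dim \Rbarinfty = \dim S_\infty = 1 + n^2\#T + q$, as follows from Proposition~\ref{prop: existence of local deformation rings} (which computes $\dim R^{\loc}$) combined with the choice of $q$ (which makes $\Rbarinfty$ a power series ring over $R^{\loc}$ in precisely the right number of variables). Since $S_\infty$ is regular and maps into $\End_{\Rbarinfty}(M_\infty)$, and $M_\infty$ is a finite $\Rbarinfty$-module of the same Krull dimension, $M_\infty$ is maximal Cohen--Macaulay as an $\Rbarinfty$-module; in particular $M_\infty$ has no embedded primes, and $\pi \in S_\infty$ acts regularly on it. The non-negativity and integrality of $\mu_a'(\rhobar)$ and $\cycle_a'(\rhobar)$ (\emph{a priori} only $\Q$-valued) will reduce to showing that $n!$ divides each $e(M^a_\infty, \Rinfty/\pi)$, which I will extract as a by-product of the argument for (5).

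\textbf{Equivalence of (2) with (4) and (5).} Since $M_\infty$ is $\pi$-torsion free, I apply Proposition~\ref{prop:cutting out} with $f = \pi$. Writing $\cycle(M_\infty) = \sum_x r_x \overline{\{x\}}$, where $x$ ranges over the generic points of $\Spec \Rbarinfty$ of maximal dimension and $r_x$ is the generic rank of $M_\infty$ along $\overline{\{x\}}$, this gives
\[ \cycle(M_\infty/\pi M_\infty) = \sum_x r_x \cycle(\overline{\{x\}} \cap V(\pi)). \]
The same proposition applied to $\Rbarinfty$ itself yields $\cycle(\Rbarinfty/\pi) = \sum_x m_x \cycle(\overline{\{x\}} \cap V(\pi))$, where $m_x$ is the multiplicity of the component $\overline{\{x\}}$ in $\Spec \Rbarinfty$. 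On the other hand, Lemma~\ref{lem:cycles in exact sequences} applied to the filtration $(M^i_\infty)$, together with the definition of $M^a_\infty$, gives $\cycle(M_\infty/\pi M_\infty) = \sum_a n_a \cycle(M^a_\infty)$. Condition (5) therefore becomes $n! \cdot \cycle(\Rbarinfty/\pi) = \cycle(M_\infty/\pi M_\infty)$, which, by comparing coefficients of each $\cycle(\overline{\{x\}} \cap V(\pi))$, is equivalent to $r_x = n! \cdot m_x$ for every $x$; this in turn is exactly the statement that $M_\infty[1/p]$ is everywhere locally free of rank $n!$ over $\Rbarinfty[1/p]$, i.e.\ condition (2). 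The equivalence (4)~$\iff$~(5) then follows from Lemma~\ref{lem:multiplicities from cycles} evaluated at the closed point, combined with additivity of multiplicities along the filtration. As a by-product, $r_x = n!\cdot m_x$ furnishes the $n!$-divisibility needed for the integrality of $\mu_a'(\rhobar)$ and $\cycle_a'(\rhobar)$.

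\textbf{Remaining equivalences and main obstacle.} For (1)~$\iff$~(2): since $\Rbarinfty = R^{\loc}[[x_1,\dots,x_{q-[F^+:\Q]n(n-1)/2}]]$, the generic components of $\Spec \Rbarinfty[1/p]$ correspond bijectively to those of $\Spec R^{\loc}[1/p]$, and quotienting $M_\infty$ by $\mf{a}$ recovers $M$; the crucial input is that the generic rank of $M\otimes_{\Zp}\Qp$ over $\bb{T}^T_{\lambda,\tau}(U,\cO)_\m[1/p]$ is $n!$, the dimension of the Iwahori invariants of an unramified principal series, and via the faithful flatness of $M_\infty[1/p]$ this $n!$ matches the one appearing in (2) on each component that is hit. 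For (2)~$\implies$~(3): faithful flatness forces the action of $\Rbarinfty/\mf{a}$ on $M[1/p]$ to be faithful, hence $R^{\univ}_\cS[1/p]$ acts faithfully on $M[1/p]$, and since $M$ is $\cO$-finite, $R^{\univ}_\cS$ must be finite over $\cO$. The converse (3)~$\implies$~(2) follows from a standard Nakayama-type argument lifting faithfulness from $M$ up through the patching system to $M_\infty$, as in \cite[Lem.~2.2.11]{kisinfmc} and \cite[Lem.~4.3.8]{geekisin}. The main obstacle is careful book-keeping of the factor $n!$: it originates from the Iwahori rank, and must propagate consistently through patching, through the Cohen--Macaulay rank equation, and through the normalization of $\mu_a'$ and $\cycle_a'$. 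Once this is handled, all five conditions become facets of the single assertion that $M_\infty[1/p]$ is locally free of rank $n!$ over $\Rbarinfty[1/p]$.
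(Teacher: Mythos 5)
Your blueprint follows the same general lines as the paper -- use maximal Cohen--Macaulayness/flatness of $M_\infty[1/p]$ over the regular ring $\Rbarinfty[1/p]$, Proposition~\ref{prop:cutting out}, and Lemma~\ref{lem:cycles in exact sequences} to compare $\cycle(\Rbarinfty/\pi)$ with $\cycle(M_\infty/\pi M_\infty)$ -- but the way you string the equivalences together has two genuine gaps.

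First, the step from condition (5) to ``$r_x = n!\cdot m_x$ for every $x$'' by ``comparing coefficients of each $\cycle(\overline{\{x\}}\cap V(\pi))$'' is not legitimate as stated. For distinct generic points $x$ of $\Spec\Rbarinfty$, the cycles $\cycle(\overline{\{x\}}\cap V(\pi))$ live in dimension $\dim\Rbarinfty - 1$ and can perfectly well share irreducible components, so the identity $\sum_x (n!m_x - r_x)\cycle(\overline{\{x\}}\cap V(\pi)) = 0$ does not force each coefficient to vanish. The paper closes this by first establishing the dichotomy $r_x\in\{0,n!\}$ using the Auslander--Buchsbaum flatness argument together with localization to $M$ via the closed point of $\Spec S_\infty[1/p]$ (if a component $Z$ is in the support of $M_\infty[1/p]$, it surjects onto $\Spec S_\infty[1/p]$, so its fibre over the closed point hits $\Spec R^{\univ}_\cS[1/p]$, where the rank is read off as $n!$). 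Once $r_x\in\{0,n!\}$ is known, the cycle difference $\sum_{x:\,r_x=0}\cycle(\overline{\{x\}}\cap V(\pi))$ is effective, and an effective nonzero cycle cannot vanish; that is the positivity argument that rescues the coefficient comparison. You invoke the Iwahori rank $n!$ only in the context of (1)$\iff$(2), and do not feed the resulting dichotomy back into your treatment of (5).

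Second, your claim that ``(4)$\iff$(5) follows from Lemma~\ref{lem:multiplicities from cycles} evaluated at the closed point, combined with additivity of multiplicities along the filtration'' is wrong in the direction (4)$\implies$(5). That lemma says a cycle equality implies the corresponding multiplicity equality; the passage from cycles to multiplicities loses information, so there is no converse by itself. The paper instead runs (4)$\implies$(2)$\implies$(5): condition (4) first gives faithfulness and generic freeness of rank $n!$ (via Proposition~1.3.4 of \cite{kisinfmc}, which supplies the a priori inequality $e(\Rbarinfty/\pi)\geq (n!)^{-1}e(M_\infty/\pi M_\infty,\Rbarinfty/\pi)$ with equality iff faithful), and only then does Proposition~\ref{prop:cutting out} yield the cycle equality. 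Relatedly, your ``by-product'' argument for the non-negative integrality of $\mu_a'(\rhobar)$ only applies when conditions (2)/(5) hold, whereas the lemma asserts it unconditionally; the paper obtains this separately from Proposition~1.3.4 of \cite{kisinfmc}, independently of whether the equivalent conditions hold.
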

\begin{proof} By Proposition \ref{prop: existence of local deformation
    rings} and our assumptions on the primes in $T,$ $\Rbarinfty[1/p]$
  is formally smooth, and equidimensional of dimension $q+n^2\#T = \dim
  S_{\infty}[1/p].$   Furthermore, the morphism
$\Spec \Rbarinfty[1/p] \to \Spec R^{\loc}[1/p]$, corresponding to
the $R^{\loc}[1/p]$-algebra structure on $\Rbarinfty[1/p]$, induces
a bijection on irreducible components; below we denote this bijection
by $Z \mapsto Z'$.

Since $M_{\infty}$ is finite free over
  $S_{\infty}$, and the image of $\Rbarinfty$ in $\End(M_{\infty})$ is
  an $S_{\infty}$-algebra, $M_{\infty}\otimes_{\Z_p}\Q_p$ has depth at
  least $q+n^2\#T$ at every maximal ideal of $\Rbarinfty[1/p]$ in its
  support; so by the Auslander--Buchsbaum formula, it has depth
  exactly $q+n^2\#T$ at every maximal ideal of $\Rbarinfty[1/p]$ in
  its support, and $M_{\infty}\otimes_{\Z_p}\Q_p$ has projective
  dimension $0$ over $\Rbarinfty[1/p]$. Since
  $M_{\infty}\otimes_{\Z_p}\Q_p$ is finite over $S_{\infty}[1/p]$, we
  see that it is finite flat over $\Rbarinfty[1/p],$
and so its support is a union of irreducible components of
$\Spec \Rbarinfty[1/p]$.

If $Z \subset \Spec \Rbarinfty[1/p]$ is an irreducible component in the support of $M_{\infty}\otimes_{\Z_p}\Q_p,$ then $Z$ is 
finite over $\Spec S_{\infty}[1/p]$ and of the same dimension, because
$M_{\infty}\otimes_{\Z_p}\Q_p$ is finite free over $S_{\infty}[1/p]$. Hence the map $Z \rightarrow \Spec S_{\infty}[1/p]$ 
is surjective.  In particular, the fibre of $Z$ over
the closed point $\mf{a}$ of $\Spec S_{\infty}[1/p]$ is non-zero,
and hence gives a point in the support of
$M_{\infty}/\mf{a}M_{\infty} = M$
lying in
the component $Z'$ of $\Spec R^{\loc}[1/p]$ corresponding to $Z$.
As $M$ has rank $n!$ over any point of $R^{\univ}_{\mc{S}}[1/p]$ in its support,
it follows that $M_{\infty}\otimes_{\Z_p}\Q_p$ is in fact
locally free of rank $n!$ over all of $Z.$ 

Thus we see that $M_{\infty}\otimes_{\Z_p}\Q_p$ is faithful over
$\Rbarinfty[1/p]$, or equivalently, has support consisting of
the union of all the irreducible components of $\Rbarinfty[1/p]$,
if and only if the support of $M$ meets each irreducible component
of $\Spec R^{\loc}[1/p]$, and that, in this case, it is furthermore
locally free of rank $n!$.
This shows that (1) and (2) are equivalent.

Using Proposition 1.3.4 of \cite{kisinfmc} we also see that each $\mu'_a(\rhobar)$ is a non-negative integer, and that 
\[e(\Rbarinfty/\pi) \geq (n!)^{-1} e(M_{\infty}/\pi M_{\infty},\Rbarinfty/\pi) 
=  (n!)^{-1}\sum_an_ae(M^{a}_{\infty},\Rinfty/\pi)\]
 with equality if and only if $M_{\infty}$ is a faithful
 $\Rbarinfty$-module (in which case, as already observed,
it is necessarily locally free of rank $n!$).
Thus (2) and (4) are equivalent.

If $M_{\infty}$ is a faithful $\Rbarinfty$-module, then $\Rbarinfty$,
being a subring of the ring of $S_\infty$-endomorphisms of the finite
$S_\infty$-module $M_\infty$, is finite over $S_{\infty},$ 
and so $R^{\univ}_{\mc{S}},$ which is a quotient of $\Rbarinfty/\mf{a},$ is a finite $\cO$-module. This shows that (2) implies (3).
Now assume (3), so that $R^{\univ}_{\mc{S}}$ is a finite $\cO$-algebra; we
see by Proposition 1.5.1 of \cite{BLGGT} that the image of 
$$ \Spec R^{\univ}_{\mc{S}} \rightarrow \Spec R^{\loc}$$ 
meets every component of $\Spec R^{\loc}[1/p].$ Hence, since
$M\otimes_{\Z_p}\Q_p$ is a faithful $R^{\univ}_{\mc{S}}$-module by assumption, 
we see that~(1) holds.
Thus (1)--(4) are all equivalent.

It remains to show that (5) is equivalent to the other
conditions. From Lemma \ref{lem:multiplicities from cycles}, we see
that (5) implies (4). Now assume that (2) holds. Then $M_\infty$ is
$\pi$-torsion free and generically free of rank $n!$ over each
component of $\Spec \Rbarinfty$, so by Proposition~\ref{prop:cutting out}
we have \[\cycle(\Rbarinfty/\pi)=(n!)^{-1}\cycle(M_\infty/\pi
M_\infty).\] Identifying each of the modules $M_{\infty}/\pi
M_{\infty}$, $M^i_{\infty}/M^{i-1}_\infty$, etc., with the
corresponding sheaves on $\Spec \Rinfty/\pi$ that they give rise
to, we see from Lemma \ref{lem:cycles in exact sequences} that we
have \begin{align*}\cycle(M_{\infty}/\pi M_\infty)
  &=\sum_i\cycle(M^i_\infty/M^{i-1}_\infty)
  \\&=\sum_an_a\cycle(M^{a}_{\infty}).\end{align*} The result
follows.\end{proof}
For each Serre weight
  $a\in(\Z^n_+)^{\Hom(k,\F)}$, let $\lambda_a$ be a fixed lift of
  $a$. As in Remark \ref{rem: generalised BM conjecture remarks}(1),
  we may inductively define unique integers $\mu_a(\rbar)$ such that
  for any Serre weight $b$, if we
  write \[(L_{\lambda_b,\triv}\otimes_\cO\F)^\semis\isoto\oplus_a
  F_a^{m_{a,b}},\] then \[e(R^{\square}_{\rbar,\lambda_b,\triv}/\pi)=\sum_a
  m_{a,b}\mu_a(\rbar).\] (In Remark \ref{rem: generalised BM
    conjecture remarks}(1) we were assuming the Breuil--M\'ezard
  conjecture, but the inductive process defining the $\mu_a(\rbar)$
  does not rely on any cases of the conjecture.) Similarly, we may inductively define unique
  cycles $\cC_a$ such that \[\cycle(R^{\square}_{\rbar,\lambda_b,\triv}/\pi)=\sum_a
  m_{a,b}\cC_a.\] 

We now prove the main result of this section, a conditional
equivalence between the geometric formulation of the Breuil--M\'ezard
conjecture, the numerical formulation, and a global statement. For the
convenience of the reader, we recall the various assumptions of this
section in the statement of the theorem.
\begin{thm}
  \label{thm: purely local statement assuming that for Serre weights,
    and in addition the statement of the support for lambda and tau}
  Let $p>2$ be prime, let $K/\Qp$ be a finite extension, and let
  $\rbar:G_K\to\GL_n(\F)$ be a continuous representation, with $\F$ a
  finite extension of $\Fp$. Assume that there is a suitable
  globalization $\rhobar$ of $\rbar$ as
  in Section~{\em \ref{subsec:basics}} {\em (}for example, by Corollary {\em \ref{cor: the final
    local-to-global result}} this is guaranteed if $p\nmid n$ and
  Conjecture {\em \ref{conj: existence of local crystalline lifts}} holds
  for $\rbar${\em )}.

  Suppose that the equivalent conditions of Lemma~{\em \ref{key patching
    lemma}} hold whenever each $\lambda_v=\lambda_{a_v}$ for some Serre
  weight $a_v$ and each $\tau_v=\triv$.

Then, if
$\lambda\in(\Z^n_+)^{\Hom_\Qp(K,E)}$ and $\tau$ is an inertial type 
for $G_K$, and if we write 
\[(L_{\lambda,\tau}\otimes_\cO\F)^\semis\isoto\oplus_a F_a^{n_a},\]
the following conditions are equivalent:
\begin{enumerate}
\item The equivalent conditions of Lemma~{\em \ref{key patching lemma}} hold
  when $\lambda_v=\lambda$ and $\tau_v=\tau$ for each $v|p$.
\item $e(R^{\square}_{\rbar,\lambda,\tau}/\pi)=\sum_a
  n_a\mu_a(\rbar)$.
\item $\cycle(R^{\square}_{\rbar,\lambda,\tau}/\pi)=\sum_a
  n_a\cC_a$.
\end{enumerate}

\end{thm}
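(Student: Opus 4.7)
The plan is to mimic the argument of Section~\ref{sec:GL2 Qp} in the $n$-dimensional setting, using Lemma~\ref{key patching lemma} as the patching input together with the multiplicativity of cycles and Hilbert--Samuel multiplicities under completed tensor products (Lemma~\ref{lem:product cycles}). The crucial structural point, built into the patching construction preceding Lemma~\ref{key patching lemma}, is that the $(\Rinfty,S_\infty)$-bimodule structure on each graded piece $M^i_\infty/M^{i-1}_\infty$ depends only on the Serre weight it realises, not on the chosen lift $(\lambda,\tau)$. Consequently the hypothesis of the theorem pins down all the quantities $\cycle_{(a_v)}'(\rhobar)$ and $\mu_{(a_v)}'(\rhobar)$ explicitly in terms of the local invariants $\cC_a$ and $\mu_a(\rbar)$.

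First, I would apply the hypothesis to each all-Serre-weight, trivial-type choice $\lambda_v = \lambda_{a_v}$, $\tau_v = \triv$. Lemma~\ref{key patching lemma}(5) yields the identity $\cycle(\Rbarinfty/\pi) = \sum_{(a_v')} \prod_v m_{a_v',a_v}\, \cycle_{(a_v')}'(\rhobar)$ and its multiplicity analogue from~(4). On the other hand Lemma~\ref{lem:product cycles} factors the same cycle as $\prod_{v \in S_p} \cycle(R^\square_{\rbar,\lambda_{a_v},\triv}/\pi) \times \cycle(C_\infty/\pi)$, where $C_\infty := R^{\square}_{\tv_1}[[x_1,\ldots,x_{q-[F^+:\Q]n(n-1)/2}]]$ is the formally smooth auxiliary factor (so $\cycle(C_\infty/\pi)$ is a single reduced component and $e(C_\infty/\pi)=1$). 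Substituting the definition $\cycle(R^\square_{\rbar,\lambda_b,\triv}/\pi) = \sum_a m_{a,b}\cC_a$ and inverting the unipotent lower-triangular matrix $(m_{a,b})$ (Lemma~\ref{lem: decomposition of L_lambda mod p}) term by term across all choices $(a_v)$, one obtains the key factorisations
\[ \cycle_{(a_v')}'(\rhobar) = \prod_{v\in S_p} \cC_{a_v'} \times \cycle(C_\infty/\pi), \qquad \mu_{(a_v')}'(\rhobar) = \prod_{v\in S_p} \mu_{a_v'}(\rbar). \]

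With these in hand, (1)$\Leftrightarrow$(2) follows quickly: applying Lemma~\ref{key patching lemma}(4) with the given $(\lambda,\tau)$ and substituting the formula for $\mu'$, condition~(1) becomes the identity $\prod_{v \in S_p} e(R^\square_{\rbar,\lambda,\tau}/\pi) = \bigl(\sum_a n_a \mu_a(\rbar)\bigr)^{\#S_p}$ (using Lemma~\ref{lem:product cycles} on the left), from which unique extraction of the nonnegative integer $\#S_p$-th root yields~(2); the converse reverses the calculation to recover condition~(4) of Lemma~\ref{key patching lemma}. The implication (3)$\Rightarrow$(2) is immediate from Lemma~\ref{lem:multiplicities from cycles} evaluated at the closed point of $\Spec R^\square_{\rbar}/\pi$.

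The main obstacle is (1)$\Rightarrow$(3). Applying Lemma~\ref{key patching lemma}(5), substituting the formula for $\cycle'$, and cancelling the smooth factor $\cycle(C_\infty/\pi)$ produces
\[ \prod_{v\in S_p} \cycle(R^\square_{\rbar,\lambda,\tau}/\pi) = \prod_{v\in S_p}\Bigl(\sum_a n_a \cC_a\Bigr) \]
as $\#S_p$-fold external products of cycles on $\widehat{\otimes}^{\#S_p}_\cO R^\square_{\rbar}/\pi$. To extract a ``$\#S_p$-th root'' of this equation, I would encode each cycle $X = \sum_i n_i [Z_i]$ on $\Spec R^\square_{\rbar}/\pi$ (with $Z_i$ ranging over its finitely many irreducible components) as the linear form $P_X(t) = \sum_i n_i t_i$ in the polynomial ring $\Z[t_i]$. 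By Lemma~\ref{lem:product cycles} the irreducible components of the $\#S_p$-fold tensor product are the distinct external products $Z_{i_1}\boxtimes\cdots\boxtimes Z_{i_{\#S_p}}$, so external product of cycles corresponds to ordinary polynomial multiplication, and the displayed equation becomes $P_X^{\#S_p} = P_Y^{\#S_p}$ in the UFD $\Z[t_i]$. Unique factorisation then forces $P_Y = \pm P_X$; the minus sign is excluded by the already-established~(2) together with Lemma~\ref{lem:multiplicities from cycles} at the closed point, which shows that $\sum_a n_a \cC_a$ has multiplicity $\sum_a n_a \mu_a(\rbar) = e(R^\square_{\rbar,\lambda,\tau}/\pi) \geq 0$ there. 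This yields~(3) and completes the cycle of implications.
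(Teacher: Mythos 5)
Your proof follows the same route as the paper's. Both arguments (i) specialize the hypothesis to the case where each $\lambda_v=\lambda_{a_v}$ is a Serre weight lift and each $\tau_v=\triv$, and combine condition~(5) (resp.\ (4)) of Lemma~\ref{key patching lemma} with the multiplicativity of cycles under completed tensor product (Lemma~\ref{lem:product cycles}) and an induction/matrix-inversion over the partial order on Serre weights to pin down $\cycle_a'(\rhobar)=\prod_{v\mid p}\cC_{a_v}\times\cycle(C_\infty/\pi)$ and $\mu_a'(\rhobar)=\prod_{v\mid p}\mu_{a_v}(\rbar)$; and (ii) then apply Lemma~\ref{key patching lemma} with the given $(\lambda,\tau)$, factor both sides across $v\mid p$, and identify the resulting $\#S_p$-th power identity with the desired local statement. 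The only place you differ from the paper is the last step: the paper passes from the equality of $\#S_p$-fold external products of cycles to the equality of the underlying local cycles without comment, whereas you make the ``$\#S_p$-th root extraction'' explicit (encoding cycles as linear forms in $\Z[t_i]$, invoking unique factorization, and fixing the sign via the already-proved~(2) together with Lemma~\ref{lem:multiplicities from cycles}). This is a welcome addition of rigour; one small remark is that your encoding ``cycle on the product $\mapsto$ polynomial'' collapses the distinct components $Z_{i_1}\boxtimes\cdots\boxtimes Z_{i_k}$ and their permutations onto the same monomial $t_{i_1}\cdots t_{i_k}$, so strictly speaking you only recover the equality $P_X^{\#S_p}=P_Y^{\#S_p}$ as a consequence of the stronger $X^{\boxtimes \#S_p}=Y^{\boxtimes \#S_p}$ rather than an exact reformulation of it; this does not affect the argument, since $P_X^k=P_Y^k$ in $\Z[t_i]$ already forces $P_X=\pm P_Y$, and in fact the elementary alternative of equating coefficients on the ``diagonal'' components $Z_i^{\boxtimes \#S_p}$ (giving $x_i^{\#S_p}=y_i^{\#S_p}$) would suffice without any appeal to UFD. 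In short: correct, and essentially the paper's proof with the implicit root-extraction step spelled out.
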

\begin{proof} Consider conditions (4) and (5)
  of Lemma \ref{key patching lemma} in the case that each
  $\lambda_v=\lambda_{b_v}$ for some Serre weight $b_v$ and each
  $\tau_v=\triv$. Note that in this case we have
  $(L_{\lambda,\tau}\otimes_\cO\F)^\semis\isoto\oplus_aF_a^{m_{a,b}}$
  where $m_{a,b}=\prod_vm_{a_v,b_v}$. Throughout this proof we will
  write $\rhobar_v$ for $\rhobar|_{G_{F_\tv}}$ (which is isomorphic to
  $\rbar$).

  As remarked above, the conditions on $v_1$ ensure that $R_{\tv_1}^\square$ is
  formally smooth over $\cO$, say $R_{\tv_1}^\square\cong\cO[[t_1,\dots,t_{n^2}]]$. Our
  assumption that the equivalent conditions of Lemma~\ref{key patching
    lemma} hold implies that \begin{align*}\sum_am_{a,b}\mu'_{a}(\rhobar)&=e(\Rbarinfty/\pi)\\&=\prod_{v|p}e(R^{\square}_{\rhobar_v,\lambda_{b_v},\triv}/\pi)\\&=\prod_{v|p}\sum_{a_v}
    m_{a_v,b_v}\mu_{a_v}(\rbar)\\&=\sum_am_{a,b}\prod_{v|p}\mu_{a_v}(\rbar),\end{align*}
  where the second equality holds by Proposition 1.3.8 of
  \cite{kisinfmc}. Similarly \begin{align*}\sum_am_{a,b}\cycle'_{a}(\rhobar)&=\cycle(\Rbarinfty/\pi)\\&=\prod_{v|p}\cycle(R^{\square}_{\rhobar_v,\lambda_{b_v},\triv}/\pi)\times\cycle
    (\F[[[x_1,\dots,x_{q-[F^+:\Q]n(n-1)/2},t_1,\dots,t_{n^2} ]])\\&=\prod_{v|p}\sum_{a_v}
    m_{a_v,b_v}\cC_{a_v}\times\cycle
    (\F[[[x_1,\dots,x_{q-[F^+:\Q]n(n-1)/2},t_1,\dots,t_{n^2} ]])\\&=\sum_am_{a,b}\prod_{v|p}\cC_{a_v}\times\cycle
    (\F[[[x_1,\dots,x_{q-[F^+:\Q]n(n-1)/2},t_1,\dots,t_{n^2} ]]),\end{align*}where we
  have made use of Lemma \ref{lem:product cycles}. If we define a partial order on the tuples $(a_{v})_{v|p}$ of Serre
  weights by writing $(a_{v})_{v|p}\ge (b_{v})_{v|p}$ to mean that
  each $a_v\ge b_v$, then an easy induction using Lemma \ref{lem:
    decomposition of L_lambda mod p} shows that we must in fact have
  that for each Serre weight
  $a$, \[\mu'_a(\rhobar)=\prod_{v|p}\mu_{a_v}(\rbar)\]and \[\cycle'_a(\rhobar)=\prod_{v|p}\cC_{a_v}\times\cycle
    (\F[[[x_1,\dots,x_{q-[F^+:\Q]n(n-1)/2},t_1,\dots,t_{n^2} ]]).\]
  
We now consider the conditions of Lemma \ref{key patching lemma} in
the case that each $\lambda_v=\lambda$ and each $\tau_v=\tau$. By
definition (and Lemma \ref{lem:product cycles}), we
have \[\cycle(\Rbarinfty/\pi)=\prod_{v|p}\cycle(R^{\square}_{\rhobar_v,\lambda,\tau}/\pi)\times\cycle
    (\F[[[x_1,\dots,x_{q-[F^+:\Q]n(n-1)/2},t_1,\dots,t_{n^2} ]]).\]
Using the relation above, we
have \begin{align*}\sum_an_a\cycle_{a}'(\rhobar)&=\sum_a\prod_{v|p}n_{a_v}\cC_{a_v}\times\cycle
    (\F[[[x_1,\dots,x_{q-[F^+:\Q]n(n-1)/2},t_1,\dots,t_{n^2} ]])\\&=\prod_{v|p}\sum_{a_v}n_{a_v}\cC_{a_v}\times\cycle
    (\F[[[x_1,\dots,x_{q-[F^+:\Q]n(n-1)/2},t_1,\dots,t_{n^2}
    ]]). \end{align*}Now, condition (5) of Lemma~\ref{key patching
    lemma} states
  that  \[\cycle(\Rbarinfty/\pi)=\sum_an_a\cycle'_{a}(\rhobar),\] which  is equivalent
to \begin{multline*}\prod_{v|p}\cycle(R^{\square}_{\rhobar_v,\lambda,\tau}/\pi)\times\cycle
    (\F[[[x_1,\dots,x_{q-[F^+:\Q]n(n-1)/2},t_1,\dots,t_{n^2} ]])\\=\prod_{v|p}\sum_{a_v}n_{a_v}\cC_{a_v}\times\cycle
    (\F[[[x_1,\dots,x_{q-[F^+:\Q]n(n-1)/2},t_1,\dots,t_{n^2} ]])\end{multline*}
and thus
to \[\cycle(R^{\square}_{\rbar,\lambda,\tau}/\pi)=\sum_an_a\cC_a,\]giving
the equivalence of (1) and (3). The equivalence of (1) and (2) may be proved
by a formally identical argument.  
\end{proof}
\begin{remark}
  {\em We regard the assumption of Theorem \ref{thm: purely local
      statement assuming that for Serre weights, and in addition the
      statement of the support for lambda and tau} (that the
    conditions of Lemma \ref{key patching lemma} hold for all Serre
    weights) as a strong form of the weight part of Serre's
    conjecture, saying that each set of components of the local
    crystalline deformation rings in low weight is realised by a
    global automorphic Galois representation. Under this assumption, Theorem \ref{thm: purely local statement assuming that
      for Serre weights, and in addition the statement of the support
      for lambda and tau} may be regarded as saying that instances of
    the Breuil--M\'ezard conjecture (and its geometric refinement) are
    equivalent to certain $R=\T$ theorems.}
\end{remark}
One case in which Theorem~\ref{thm: purely local statement assuming that for Serre weights,
    and in addition the statement of the support for lambda and tau}
  has unconditional consequences is the case that $n=2$ and
  $\lambda=0$, where the automorphy lifting theorems of \cite{kis04}
  and \cite{MR2280776} can be applied. We now give such an
  application, where we make use of the results of~\cite{geekisin} to
  give a particularly clean statement. Note that when $n=2$, the
  inductive definition of the cycles $\cC_a$ above is trivial, and we
  simply set $\cC_a:=\cycle(R^{\square}_{\rbar,\lambda_a,\triv}/\pi)$.

Suppose that $K/\Qp$ is
  absolutely unramified, and that $n=2$; then we say that a Serre
  weight $a$ is a \emph{predicted Serre weight for} $\rbar$ if it is one of
  the weights predicted in~\cite{bdj}. We say that $a$ is
  \emph{regular} if we have $0\le a_{\sigma,1}-a_{\sigma,2}<p-1$ for
  each $\sigma\in\Hom(k,\F)$.

\begin{thm}
  \label{thm: purely local statement for unramified regular pot-BT}
  Let $p>2$ be prime, let $K/\Qp$ be an absolutely unramified finite extension, and let
  $\rbar:G_K\to\GL_2(\F)$ be a continuous representation, with $\F$ a
  finite extension of $\Fp$. Suppose that every predicted Serre weight for
  $\rbar$ is regular.
  Then, for any inertial type $\tau$ for $G_K$, if we write
$(L_{0,\tau}\otimes_\cO\F)^\semis\isoto\oplus_a F_a^{n_a},$
we have $\cycle(R^{\square}_{\rbar,0,\tau}/\pi)=\sum_a n_a\cC_a$.
\end{thm}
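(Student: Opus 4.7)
The plan is to apply Theorem~\ref{thm: purely local statement assuming that for Serre weights, and in addition the statement of the support for lambda and tau} with $\lambda=0$ and the given inertial type $\tau$. Two hypotheses must be verified: the existence of a suitable globalization $\rhobar$ of $\rbar$, and that the equivalent conditions of Lemma~\ref{key patching lemma} hold whenever each $\lambda_v=\lambda_{a_v}$ for some Serre weight $a_v$ and each $\tau_v=\triv$.

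For the globalization: since $p>2$ and $n=2$, we have $p\nmid n$, and crystalline lifts of two-dimensional residual representations with regular Fontaine--Laffaille Hodge type are known to exist by standard constructions, so Conjecture~\ref{conj: existence of local crystalline lifts} holds for $\rbar$. Corollary~\ref{cor: the final local-to-global result} then produces the required $\rhobar$.

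The core of the proof is the verification of condition~(1) of Lemma~\ref{key patching lemma} at Serre weight parameters. The regularity assumption ensures that each $\lambda_{a_v}$ lies in the Fontaine--Laffaille range, so $R^{\square}_{\rbar,\lambda_{a_v},\triv}$ is well-understood. The key observation is that every irreducible component of $\Spec R^{\square}_{\rbar,\lambda_{a_v},\triv}[1/p]$ may be identified, up to twist and a formally smooth factor, with a component of some potentially Barsotti--Tate deformation ring $\Spec R^{\square}_{\rbar,0,\tau_{a_v}}[1/p]$ for a suitable tame type $\tau_{a_v}$ having $F_{a_v}$ as a Jordan--H\"older factor of the reduction of its associated $\GL_2(\cO_K)$-representation. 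The weight part of Serre's conjecture proved in \cite{geekisin} produces, for each predicted Serre weight of $\rbar$, an automorphic lift of $\rhobar$ realizing that weight at every place above $p$; combined with the automorphy lifting theorems for potentially Barsotti--Tate representations of \cite{kis04} and~\cite{MR2280776}, this yields the required support condition, hence condition~(1) of Lemma~\ref{key patching lemma}.

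The main obstacle is handling the fact that condition~(1) must hold for \emph{all} choices of Serre weight $a_v$ at each $v|p$, including those that are not predicted Serre weights of $\rbar$; for these one must verify either that the corresponding local deformation rings vanish after reduction mod $\pi$, or that their components are accounted for via the inductive definition of the $\cC_a$ from the predicted-weight cases---an analysis that parallels Proposition~\ref{prop:relationship between cycles for GL2 Qp} and is controlled by the results of \cite{geekisin}. With both hypotheses established, Theorem~\ref{thm: purely local statement assuming that for Serre weights, and in addition the statement of the support for lambda and tau} applied at $(\lambda,\tau)=(0,\tau)$ yields the claimed cycle identity $\cycle(R^{\square}_{\rbar,0,\tau}/\pi)=\sum_a n_a\cC_a$.
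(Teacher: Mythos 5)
You correctly reduce to Theorem~\ref{thm: purely local statement assuming that for Serre weights, and in addition the statement of the support for lambda and tau}, and your verification that a suitable globalization exists (via Conjecture~\ref{conj: existence of local crystalline lifts} and Corollary~\ref{cor: the final local-to-global result}) is in substance fine. However, there is a genuine gap in your final step. The conclusion of that theorem is an \emph{equivalence} of conditions (1), (2), (3) at the given $(\lambda,\tau)$; once its hypotheses are verified, it does not on its own produce the cycle identity (3). You must still independently establish one of the three equivalent conditions at $(\lambda,\tau)=(0,\tau)$. The Serre-weight verifications you discuss address only the \emph{hypothesis} of the theorem (instances with $\lambda_v$ a lift of a Serre weight and $\tau_v=\triv$), not condition (1) at $\lambda_v=0$, $\tau_v=\tau$, which is not a Serre-weight instance unless $\tau$ is trivial. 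Your proposal asserts that ``with both hypotheses established'' the theorem ``yields the claimed cycle identity,'' but this is precisely what an equivalence does not give. The paper's proof fills this gap explicitly: it checks the conditions of Lemma~\ref{key patching lemma} separately for $\lambda_v=0$ and arbitrary $\tau_v=\tau$ using Lemma~4.4.1 of~\cite{geekisin} (automorphy lifting for potentially Barsotti--Tate representations), and condition~(3) then follows from the equivalence.

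A secondary point: your route to the Serre-weight hypothesis, via a claimed identification of the components of Fontaine--Laffaille deformation rings with components of tame-type potentially Barsotti--Tate rings and then an appeal to pot-BT lifting theorems, is much heavier than what the paper does, which is a direct citation of Lemma~4.4.2 of~\cite{geekisin} (an automorphy lifting theorem in Fontaine--Laffaille weight). As stated your detour is not precise enough to be convincing --- the weight-changing needed to carry an automorphic form across such an identification is far from a formality --- and your closing paragraph on non-predicted Serre weights and Proposition~\ref{prop:relationship between cycles for GL2 Qp} misidentifies the issue, which that citation already handles uniformly.
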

\begin{proof}
  This will follow from Theorem~\ref{thm: purely local statement assuming that for Serre weights,
    and in addition the statement of the support for lambda and tau}
  once we have checked that all of the hypotheses hold. Since $n=2$ and $p>2$,
  we have $p\nmid n$ and thus a suitable globalization will exist
  provided that Conjecture~\ref{conj: existence of local crystalline
    lifts} holds for $\rbar$; but this follows from the proof of
  Theorem A.1.2 of~\cite{geekisin} (which shows that $\rbar$ has a
  potentially Barsotti--Tate lift) and Lemma 4.4.1 of \emph{op.cit}.\ 
  (which shows that any potentially Barsotti--Tate representation is
  potentially diagonalizable).

  It remains to check that the equivalent conditions of Lemma 5.5.1 hold
  whenever $\lambda_v=0$ and $\tau_v=\tau$ (for an arbitrary $\tau$)
  for all $v|p$, and whenever $\tau_v$ is trivial for all $v$ and each
  $\lambda_v$ corresponds to a Serre weight. In the first
  (respectively second) case, this follows from Lemma 4.4.1
  (respectively Lemma 4.4.2) of \cite{geekisin}, exactly as in the
  proof of Corollary 4.4.3 of \cite{geekisin}.
\end{proof}
\begin{remark}
  {\em In particular, Theorem~\ref{thm: purely local statement for unramified regular pot-BT}
  confirms Conjecture 1.4 of~\cite{breuilmezardIII} in the case that
  each $k_\tau=2$, and extends Th\'eor\`eme 1.5 of \emph{ibid}.\ to
  arbitrary types.}
\end{remark}
\appendix
\section{Realising local representations globally}\label{sec:local to
  global}
In this appendix we realise local representations globally, using
potential automorphy theorems. In the case $n=2$, analogous results
were proved by similar techniques in Appendix A of \cite{geekisin}. We
would like to thank Robert Guralnick and Florian Herzig for the proof
of the following fact.
\begin{alemma}\label{lem: adequacy for p^n sufficiently large}Suppose
  that $p>2$ and $p\nmid n$. Let
  $(\GL_n.2)(\F_{p^m})$ denote the  subgroup of $\GL_{2n}(\F_{p^m})$
  generated by block diagonal matrices of the form $(g,{}^{-1}g^t)$
  and a matrix $J$ with $J(g,{}^{-1}g^t)J^{-1}=({}^{-1}g^t,g)$. Then
  for $m$ sufficiently large, both
  $(\GL_n.2)(\F_{p^m})\subset\GL_{2n}(\Fpbar)$ and
  $\GL_n(\F_{p^m})\subset\GL_n(\Fpbar)$ are adequate.
  \end{alemma}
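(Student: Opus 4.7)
Recall from Thorne's paper \cite{jack} that a subgroup $H\subset \GL(V)$ (with $V$ a finite-dimensional $\F$-vector space) is called adequate when four conditions hold: (a) $V$ is absolutely irreducible; (b) $H$ has no nontrivial quotient of $p$-power order; (c) $H^1(H,\ad V)=0$; and (d) for every simple $\F[H]$-submodule $W\subset \ad V$ there is a semisimple element $h\in H$ and an eigenvalue $\alpha$ of $h$ on $V$ whose associated eigenprojector has non-vanishing trace on $W$. My plan is to verify these in turn, first for the ``small'' case $\GL_n(\F_{p^m})$ acting on $V=\Fpbar^n$, and then to deduce the ``large'' case $(\GL_n.2)(\F_{p^m})$ acting on $V\oplus V^{*}$ from it using the short exact sequence
\[
1\to \GL_n(\F_{p^m})\to (\GL_n.2)(\F_{p^m})\to \Z/2\to 1.
\]

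For $\GL_n(\F_{p^m})$, condition (a) is immediate from the absolute irreducibility of the natural module, and condition (d) is handled by taking a regular semisimple diagonal element whose eigenvalues are all distinct in $\F_{p^m}^{\times}$: every simple summand of $\ad V\cong \End V$ contains a nonzero torus weight line that is witnessed by such an element and its powers. Condition (b) follows from the fact that for $m$ large enough $\SL_n(\F_{p^m})$ is perfect (avoiding the small exceptional cases), so that the abelianization of $\GL_n(\F_{p^m})$ is just $\F_{p^m}^{\times}$ via $\det$, which has order prime to $p$; the hypothesis $p\nmid n$ enters here and in the irreducible decomposition of $\ad V$ into $\mathfrak{sl}_n$ and scalars. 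The essential point, and the main obstacle, is condition (c). I would handle it by invoking the generic cohomology theorem of Cline--Parshall--Scott--van der Kallen: for $m$ sufficiently large, restriction gives an isomorphism from the rational cohomology $H^1(\GL_n,\ad V)$ (of the algebraic group over $\Fpbar$) to the finite-group cohomology $H^1(\GL_n(\F_{p^m}),\ad V)$, and the former vanishes because $V\otimes V^{*}$ is a direct sum of small-weight Weyl/tilting modules whose algebraic $H^1$ is zero.

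For $(\GL_n.2)(\F_{p^m})$ acting on $V\oplus V^{*}$, I would systematically reduce the four conditions to the $\GL_n(\F_{p^m})$ case via the above extension, exploiting that $p>2$ makes the quotient $\Z/2$ of order invertible in $\F$. Condition (a): since $V\not\cong V^{*}$ as $\GL_n$-modules for $n\ge 2$, the Mackey criterion shows that the induced representation $V\oplus V^{*}$ is irreducible for $(\GL_n.2)(\F_{p^m})$. Condition (b) is immediate from the case of $\GL_n(\F_{p^m})$ combined with $2$ being coprime to $p$. Condition (c) follows from the Lyndon--Hochschild--Serre spectral sequence: since $|\Z/2|$ is invertible in $\F$, the spectral sequence degenerates and
\[
H^1\bigl((\GL_n.2)(\F_{p^m}),\,\ad(V\oplus V^{*})\bigr)\;=\;H^1\bigl(\GL_n(\F_{p^m}),\,\ad(V\oplus V^{*})\bigr)^{\Z/2},
\]
which vanishes summand-by-summand on the $\GL_n$-decomposition $\ad(V\oplus V^{*})=V\otimes V^{*}\oplus V\otimes V\oplus V^{*}\otimes V^{*}\oplus V^{*}\otimes V$ by generic cohomology applied to each factor. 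For condition (d) I would again use regular semisimple elements lying in the index-two subgroup $\GL_n(\F_{p^m})$; one checks that every simple $(\GL_n.2)$-summand of $\ad(V\oplus V^{*})$ restricts to a $\GL_n$-module still containing a nonzero torus weight vector, which is detected by the chosen element. The principal technical obstacle throughout is condition~(c); all other verifications are either formal consequences or standard facts about the natural representation.
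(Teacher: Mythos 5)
Your plan and the paper's proof go different routes: you attempt a direct verification of the four adequacy conditions, whereas the paper cites Guralnick's results for the algebraic group $\GL_n.2$ (Theorem~1.5 of \cite{guralnickadequateII}) together with the arguments of \cite{1208.4128} to reduce the entire statement to the single assertion $\Ext^1_{(\GL_n.2)(\F_{p^m})}(V,V)=0$, which is then dispatched by inflation--restriction, the $\GL_n$ case, and the observation that $V$ and its dual have distinct central characters once $p^m>3$. That last observation is what kills the cross terms $\Ext^1_{\GL_n}(V,V^*)$ and $\Ext^1_{\GL_n}(V^*,V)$ outright (the prime-to-$p$ centre acts through distinct characters on $V$ and $V^*$, so all $\Ext$ groups between them vanish); your alternative of ``generic cohomology applied to each factor'' of $\ad(V\oplus V^*)$ also works but requires the separate algebraic-group computations $H^1(\GL_n,V^{\otimes 2})=0$ and $H^1(\GL_n,(V^*)^{\otimes 2})=0$, which are cleaner to see via the central-character argument anyway. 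The same central-character point is what underlies your irreducibility step via Mackey: ``$V\not\cong V^*$ for $n\ge 2$'' is not quite the right formulation (it is the distinctness of central characters for $p^m>3$ that matters, and the case $n=1$ is also permitted in the lemma).

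There is, however, a genuine gap in your verification of condition (d) for $(\GL_n.2)(\F_{p^m})$. You propose to use regular semisimple elements $h$ lying in the index-two block-diagonal subgroup $\GL_n(\F_{p^m})$. For such an $h$, every generalised eigenprojector $e_{h,\alpha}$ is block-diagonal in $\End(V\oplus V^*)$, so $e_{h,\alpha}M$ is off-block-diagonal for any $M$ in an off-diagonal block, and hence $\tr(e_{h,\alpha}M)=0$. But, via Mackey, $\End(V\oplus V^*)\cong\Ind_{\GL_n}^{\GL_n.2}(\gl_n)\oplus\Ind_{\GL_n}^{\GL_n.2}(V\otimes V)$, and the second summand (living in the blocks $\Hom(V,V^*)\oplus\Hom(V^*,V)$) contributes genuine simple $(\GL_n.2)$-submodules, since $V\otimes V\not\cong V^*\otimes V^*$ as $\GL_n$-modules (again by central characters). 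These simple submodules are invisible to every block-diagonal semisimple element, so your plan cannot certify (d) for them; one must use semisimple elements lying in the non-identity coset of $\GL_n$ in $\GL_n.2$. This is one of the points that is outsourced to Guralnick's algebraic-group theorem in the paper's proof, and it is not a purely formal consequence of the $\GL_n$ case, so your statement that ``all other verifications are either formal consequences or standard facts'' overreaches for this condition.
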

  \begin{proof}
    In the case of $\GL_n(\F_{p^m})$, this is immediate from Theorem 1.2 and
    Lemma 1.4
    of~\cite{1208.4128}. For $(\GL_n.2)(\F_{p^m})$, since we are
    assuming that $(p,2n)=1$, it follows from Theorem 1.5
    of~\cite{guralnickadequateII} (which shows the corresponding
    result for the underlying algebraic group $\GL_n.2$)
    and the proof of Theorem 1.2 of~\cite{1208.4128} that the result
    will be true provided that $\Ext^1_{(\GL_n.2)(\F_{p^m})}(V,V)=0$
    for $m$ sufficiently large, where $V=\F_{p^m}^{2n}$ is the space
    on which $(\GL_n.2)(\F_{p^m})$ acts. 

The result then follows easily from
    inflation-restriction, the corresponding result for $\GL_n(\F_{p^m})$ (which
    is proved in the course of the proof of Theorem 1.2
    of~\cite{1208.4128}), and the fact that the standard representation of
    $\GL_n(\F_{p^m})$ has a different central character to its dual
    provided that $p^m>3$.
  \end{proof}
\begin{aprop}\label{prop: local realisation without modularity}
  Let $K/\Qp$ be a finite extension, and let
  $\rbar:G_K\to\GL_n(\Fpbar)$ be a continuous representation, with
  $p\nmid n$. Then
  there is a CM field $L$ with maximal totally real subfield $L^+$, and a continuous irreducible
  representation $\rhobar:G_{L^+}\to\cG_n(\Fpbar)$, such that
  \begin{itemize}
  \item each place $v|p$ of $L^+$ splits in $L$;
  \item for each place $v|p$ of $L^+$, $L^+_v\cong K$ and there is a
    place $\tv$ of $L$ lying over $v$ such that $\rhobar|_{G_{L_\tv}}\cong\rbar$;
 \item $\nu\circ\rhobar=\varepsilonbar^{1-n}\delta_{L/L^+}^n$, where
     $\delta_{L/L^+}$ is the quadratic character corresponding to $L/L^+$;
  \item $\rhobar^{-1}(\GL_n(\Fpbar)\times\GL_1(\Fpbar))=G_L$;
    \item   $\rhobar(G_{L(\zeta_p)})=\GL_n(\F_{p^m})$ for some
      sufficiently large $m$ as in Lemma~{\em \ref{lem: adequacy for p^n
        sufficiently large} (}so in particular,
      $\rhobar(G_{L(\zeta_p)})$ is adequate{\em )};
  \item $\overline{L}^{\ker\rhobar}$ does not contain $L(\zeta_p)$;
  \item if $v\nmid p$ is a finite place of $L^+$, then
    $\rhobar|_{G_{L^+_v}}$ is unramified.
  \end{itemize}
\end{aprop}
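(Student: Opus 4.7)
The plan is to construct the triple $(L^+, L, \bar\rho)$ in three stages: choosing number fields with the prescribed $p$-adic completions, constructing a global $\cG_n$-valued representation with the required local behavior, and enforcing the image/disjointness conditions.

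First I would produce the fields. By weak approximation for totally real fields and Krasner's lemma, one finds a totally real number field $L^+$ with $L^+_v \cong K$ at every place $v \mid p$; concretely, $L^+$ may be defined by a suitably chosen polynomial whose $p$-adic factorization realizes $K$ at every $p$-adic place. Class field theory then produces a CM quadratic extension $L/L^+$ in which every $p$-adic place splits. The degree of $L^+$ and various linear disjointness conditions over $\Q$ remain free parameters, to be exploited in the subsequent steps.

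The heart of the proof is constructing $\bar\rho: G_{L^+} \to \cG_n(\overline{\F}_p)$ with multiplier $\varepsilonbar^{1-n}\delta_{L/L^+}^n$, unramified outside $p$, and satisfying $\bar\rho|_{G_{L_{\tilde v}}} \cong \bar r$ at a chosen place $\tilde v$ of $L$ above each $v \mid p$. My approach would be to realize $\bar\rho$ as the reduction mod $p$ of the Galois representation attached to a regular algebraic conjugate self-dual cuspidal automorphic representation of $\GL_n(\A_L)$---whose existence is guaranteed by the constructions of Shin and others---using local--global compatibility to prescribe the local components at $p$-adic places. An alternative is to begin with a compatible system satisfying looser constraints at $p$ and then apply a deformation-theoretic density argument on the polarization moduli space to adjust the local structure at each distinguished place above $p$ so that it matches $\bar r$, exploiting the smoothness (over $E$) of the local lifting rings supplied by Proposition~\ref{prop: existence of local deformation rings}.

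The main obstacle is simultaneously enforcing $\bar\rho(G_{L(\zeta_p)}) = \GL_n(\F_{p^m})$ for $m$ large enough that Lemma~\ref{lem: adequacy for p^n sufficiently large} applies and ensuring that $\overline{L}^{\ker\bar\rho}$ does not contain $L(\zeta_p)$. Using $p \nmid n$ and $p > 2$, I would twist $\bar\rho$ by finite-order characters of $G_{L^+}$ and enlarge $L^+$ by totally real extensions linearly disjoint from $L(\zeta_p)$ over $L^+$, so as to enlarge the image until it contains $\SL_n(\F_{p^m})$, to force the determinant to generate the requisite cyclic subgroup of $\F_{p^m}^\times$, and to achieve the required disjointness from $L(\zeta_p)$. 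These adjustments can be chosen so as not to destroy the prescribed local restrictions at places above $p$. The remaining conditions (correct multiplier, unramifiedness outside $p$, adequacy of the image) then follow directly from the construction combined with Lemma~\ref{lem: adequacy for p^n sufficiently large}.
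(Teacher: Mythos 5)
Your plan hinges on a step that does not work, and more fundamentally it inverts the logic of the appendix. The proposition is explicitly labelled ``local realisation \emph{without} modularity'' and is used in the paper \emph{before} any potential modularity is established; building $\bar\rho$ as the reduction of a Galois representation attached to a RACSDC automorphic representation makes the subsequent arguments circular, since the whole point is to first produce $\bar\rho$ globally and only afterwards (Lemma~\ref{lem: local char 0 realisation over CM, no modularity yet} and Proposition~\ref{prop: potential automorphy keeping p split}) prove that it becomes automorphic over a further extension.

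The concrete gap is in your ``heart of the proof.'' Local--global compatibility is a \emph{consequence} of automorphy: given an automorphic $\Pi$, it tells you what the associated local Galois representation at a place above $p$ looks like. It does not provide a mechanism for \emph{prescribing} the mod $p$ restriction at places above $p$. Producing an automorphic representation whose mod $p$ Galois representation restricts to a given $\bar r$ at $p$ is essentially (a very strong form of) the weight part of Serre's conjecture, which is not available as input here; the constructions of Shin and others control local components at finitely many finite places away from $p$, not the mod $p$ reduction at $p$. Your fallback --- a ``deformation-theoretic density argument on the polarization moduli space to adjust the local structure at each place above $p$'' --- is not an argument: there is no way to deform within a space of automorphic Galois representations so as to change the residual restriction at $p$, which is discrete data.

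The paper's actual proof is purely group/Galois-theoretic. One first fixes $m$ large enough that $\bar r(G_K)\subset\GL_n(\F_{p^m})$ and the adequacy statement of Lemma~\ref{lem: adequacy for p^n sufficiently large} applies, and then invokes Proposition~3.2 of \cite{frankII}, which is a realization-with-prescribed-local-behavior result for surjections onto a fixed finite group (here $\cG_n(\F_{p^m})$): one feeds in the desired local homomorphisms $(\bar r,\bar\varepsilon^{1-n})$ at places above $p$, the element $j$ at infinite places, and an auxiliary ramified character at a place $w\nmid p$ (used later to keep things linearly disjoint from $\zeta_p$). The multiplier condition is enforced by passing to the fixed field of a finite-order character $\bar\phi$, and the remaining conditions (unramifiedness away from $p$, linear disjointness from $\overline{N^+}{}^{\ker\bar\rho}(\zeta_p)$) are obtained by a further totally real base change, chosen via a Moret-Bailly/Chebotarev-type argument (Lemma~2.2 of \cite{MR1981033}) to split the relevant places and avoid the relevant subfields. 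No automorphic input appears anywhere in this proof. Your image-enlargement and twisting ideas are in the right spirit for the last bullet points, but they do not repair the missing central construction.
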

\begin{proof}
  Choose $m$ such that
  $\rbar(G_K)\subset\GL_n(\F_{p^m})$ and $m$ is sufficiently large as
  in Lemma~\ref{lem: adequacy for p^n sufficiently large}, and set $\F=\F_{p^m}$. We now apply Proposition 3.2 of
  \cite{frankII}; in the notation of that result, we take
  $G=\cG_n(\F)$, and we let $E$ be any totally real field with the
  property that if $v|p$ is a place of $E$, then $E_v\cong K$. We let
  $w\nmid p$ be a finite place of $E$, and we choose a character
  $\chibar_w:G_{E_w}\to\F^\times$ such that $\chibar_w|_{I_{F_w}}$
  surjects onto $\F^\times$. We take $F=E(\zeta_p)$, and we take $S$ to be
  the set of places of $E$ which either divide $p$ or else are infinite, together
  with the place $w$. We take
  $c_v=j\in\cG_n(\F)$ for each infinite place $v$ of $S$, and for each
  place $v|p$ of $E$ we let $H_v/E_v$ correspond to
  $\overline{K}^{\ker\rbar}/K$, and let
  $\phi_v:\Gal(H_v/E_v)\to\cG_n(\F)$ correspond to
  $(\rbar,\varepsilonbar^{1-n}):G_K\to\GL_n(\F)\times\GL_1(\F)\subset\cG_n(\F)$. We
  let $H_w=\overline{E}_w^{\ker\chibar_w}$, and let
  $\phi_w=(1,\chibar_w):\Gal(H_w/E_w)\to \GL_n(\F)\times\GL_1(\F)\subset\cG_n(\F)$.

Writing $M^+/E$ for the field denoted $K$ in \cite{frankII}, we see that
$M^+$ is a totally real field, and that we have a surjective
representation $\rhobar:G_{M^+}\to\cG_n(\F)$, with the properties that
\begin{itemize}
 \item for each place $v|p$ of $M^+$, $M^+_v\cong K$ and there is a
    place $\tv$ of $L$ lying over $v$ such that $\rhobar|_{M_\tv}\cong\rbar$,
  \item $w$ splits completely in $M^+$, and if $w'|w$ then   $\nu\circ\rhobar|_{G_{M^+_{w'}}}=\chibar$,
   \item for each place $v|\infty$ of $M^+$, $\rhobar(c_v)=j$, where
     $c_v$ is a complex conjugation at $v$,
   \item $\nu\circ\rhobar|_{G_{M^+_v}}=\varepsilonbar^{1-n}$ if $v|p$,
  \item $\overline{M^+}^{\ker\rhobar}$ does not contain $M^+(\zeta_p)$.
\end{itemize}
Define $M/M^+$ by $G_M=\rhobar^{-1}(\GL_n(\F)\times\GL_1(\F))$. Since
$\rhobar(c_v)=j$ for each complex conjugation $c_v$, we see that $M$
is an imaginary CM field. Similarly, we see that each place of $M^+$
lying over $p$ or $w$ splits in $M$.

Now consider the character
$\phibar:=(\nu\circ\rhobar)\varepsilonbar^{n-1}\delta_{M/M^+}^n$. By
construction, we see that $\phibar(c_v)=1$ for each complex
conjugation $c_v$, that $\phibar|_{G_{M^+_v}}=1$ for each place $v|p$,
and that $\phibar|_{I_{M^+_{w'}}}=\chibar|_{I_{M^+_{w'}}}$ for each
place $w'|w$. Replace $M^+$ by $N^+:=\overline{M^+}^{\ker\phibar}$,
$M$ by $N:=N^+M$, and $\rhobar$ by $\rhobar|_{G_{N^+}}$. Then
$N^+/M^+$ is a totally real abelian extension in which all places
dividing $p$ split completely, and it is linearly disjoint from
$M^+(\zeta_p)$ over $M^+$ (by considering the ramification at places
above $w$). 

To complete the proof, it suffices to choose a totally real finite
Galois extension $L^+/N^+$ such that
\begin{itemize}
\item each place of $N^+$ lying over $p$ splits completely in $L^+$,
\item $\rhobar|_{G_{L^+_v}}$ is unramified for all finite places
  $v\nmid p$ of $L^+$, and
\item $L^+$ is linearly disjoint from
  $\overline{N^+}^{\ker\rhobar|_{G_{N^+}}}(\zeta_p)$ over $N^+$.
\end{itemize}
The first two of these conditions concern the splitting behaviour of a
finite number of places of $N^+$, so the existence of an extension
satisfying all but the third condition is guaranteed by Lemma 2.2 of
\cite{MR1981033}. In order to satisfy the third condition as well, for
each of the (finitely many) Galois intermediate fields
$\overline{N^+}^{\ker\rhobar|_{G_{N^+}}}(\zeta_p)/L_i/N^+$ with the
property that $\Gal(L_i/N^+)$ is simple one chooses a finite place of
$N^+$ which does not split completely in $L_i$, and arranges that it
splits completely in $L^+$ (cf.\ the proof of Proposition 6.2 of
\cite{blght}).
\end{proof}
In order to study the Breuil--M\'ezard conjecture globally, we need to
be able to realise local mod $p$ representations as part of global
automorphic Galois representations. In order to do this, we will apply
the above construction, and then show that the representation
$\rhobar$ is potentially automorphic. This essentially follows from
the results of \cite{BLGGT}. However, the main theorems of
\cite{BLGGT} make no attempt to control the local behaviour at finite places
of the extensions over which potential automorphy is proved, while it is
crucial for us that we do not change the local fields at places
dividing $p$. Fortunately, a simple trick using restriction of scalars
that we learned from Richard Taylor allows us to deduce the potential
automorphy results that we need from those of \cite{BLGGT}. We will
need the following conjecture.

\begin{aconj}\label{conj: existence of local crystalline lifts}Let
  $K/\Qp$ be a finite extension, and $\rbar:G_K\to\GL_n(\Fpbar)$ a
  continuous representation. Then $\rbar$ has a potentially
  diagonalizable lift $r:G_K\to\GL_n(\Qpbar)$ which has regular
  Hodge--Tate weights.
  \end{aconj}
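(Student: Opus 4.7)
The plan is to prove the conjecture by reducing to two simpler cases---the irreducible and the successive-extension case---while exploiting the freedom to choose Hodge--Tate weights that are very regular (in particular, very spread out). The overall strategy will be to build a potentially diagonalizable lift $r$ together with an explicit extension $L/K$ over which $r|_{G_L}$ becomes crystalline and, after a further finite unramified extension, splits as a sum of crystalline characters belonging to the same component of the crystalline deformation ring of $\rbar|_{G_L}$ as the direct sum itself.

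First I would treat the irreducible case. Any continuous irreducible $\rbar: G_K \to \GL_n(\Fpbar)$ is induced from a character $\chibar: G_L \to \Fpbar^\times$ for some cyclic unramified-by-totally-ramified extension $L/K$ of degree $n$; this is standard local class field theory. One then lifts $\chibar$ to a crystalline character $\chi: G_L \to \Zpbar^\times$ with labeled Hodge--Tate weights chosen so that $r := \Ind_{G_L}^{G_K} \chi$ has strictly increasing (and far-apart) labeled weights for every embedding $K \hookrightarrow \Qpbar$. Because $L/K$ is Galois in the relevant cases (or at least after a further unramified twist one may arrange this), the restriction $r|_{G_{L'}}$ to a suitable finite extension $L'/L$ is literally the direct sum of the Galois conjugates of $\chi$, hence crystalline and already diagonal; one then checks using Kisin's results on crystalline deformation rings (and their relationship to moduli of Breuil--Kisin modules) that such an induction lies on the diagonal component of the crystalline deformation ring of $\rbar|_{G_{L'}}$, giving potential diagonalizability.

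Next I would handle the reducible case by producing a lift with an ordinary (upper-triangular) structure. Writing the Jordan--H\"older constituents of $\rbar$ as $\barchi_1,\dots,\barchi_n$ (characters on the diagonal after enlarging $\F$ if needed, using that every irreducible piece of a reducible $\rbar$ of length $n$ need not be 1-dimensional a priori, so one first reduces to this case by iterating induction for the higher-dimensional Jordan--H\"older pieces), one lifts each $\barchi_i$ to a crystalline character $\chi_i$ with Hodge--Tate weights $k_i$ chosen so that $k_1 > k_2 > \cdots > k_n$ with very large gaps. The key input is that one can always lift the associated residual extension classes in $H^1(G_K,\barchi_i\barchi_j^{-1})$ to $H^1(G_K,\chi_i\chi_j^{-1})$ provided the weights are regular enough (this is ultimately a surjectivity statement proved via the Bloch--Kato exponential and local duality). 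The resulting ordinary crystalline lift is known to be potentially diagonalizable: after passing to an extension $L/K$ over which all $\chi_i$ become unramified-twist-of-Hodge--Tate-character, the filtration splits, giving the desired direct-sum structure on the same component.

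The hard part will be controlling components in the mixed case---when $\rbar$ is neither irreducible nor itself built from characters, i.e.\ has irreducible Jordan--H\"older factors of dimension strictly between $1$ and $n$. Here one cannot simply combine the two constructions above, because an extension of potentially diagonalizable representations need not again be potentially diagonalizable; the component of the crystalline deformation ring on which an extension class lives is subtle. My plan would be to address this by iterating the first construction, realizing each irreducible factor via induction, and then using a generalization of the Bloch--Kato argument from the ordinary case to show that extensions between induced crystalline representations can be lifted while remaining on the ``correct'' component, for sufficiently spread-out Hodge--Tate weights. Controlling this last step---precisely identifying the irreducible component of $\Spec R^\square_{\rbar,\lambda}[1/p]$ containing the constructed lift and exhibiting a diagonal representative on that component after restriction---is where the serious work lies, and would likely require a detailed study of Kisin modules with coefficients, or equivalently of the Emerton--Gee moduli stack of $(\varphi,\Gamma)$-modules restricted to crystalline substacks of regular weight.
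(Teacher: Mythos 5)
This statement is labeled \texttt{aconj} in the paper: it is \emph{stated as a conjecture}, not proved. The paper uses Conjecture~\ref{conj: existence of local crystalline lifts} as a hypothesis in Section~\ref{sec: patching for GLn} and Appendix~\ref{sec:local to global}, and the only case it verifies is $n=2$ (Remark~\ref{rem: cases know of existence of local lifts}, via a Galois cohomology computation and the observation that potentially Barsotti--Tate lifts are potentially diagonalizable). So there is no proof in the paper to compare your attempt against.

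On the merits of your proposal: the irreducible case is handled correctly in spirit. Since a continuous irreducible $\rbar$ over $\Fpbar$ is trivial on wild inertia, it is induced from a character $\chibar$ of the unramified extension $L/K$ of degree $n$ (your ``unramified-by-totally-ramified'' is an overcomplication in the mod $p$ setting); inducing a crystalline lift of $\chibar$ gives an $r$ with $r|_{G_L}$ literally a direct sum of crystalline characters, so potential diagonalizability is automatic rather than something you need Kisin modules to certify. The genuine gap is exactly where you say it is: when $\rbar$ has Jordan--H\"older constituents of dimension strictly between $1$ and $n$, you must lift extension classes between induced crystalline representations while controlling which irreducible component of $\Spec R^{\square}_{\rbar,\lambda}[1/p]$ the result falls on, and you do not carry out this step---you only name it. This is not a routine Bloch--Kato surjectivity argument: potential diagonalizability is not preserved under taking arbitrary extensions, and controlling components of local crystalline lifting rings in this setting was precisely the obstruction that kept this an open conjecture at the time of the paper. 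The subsequent resolution (via moduli stacks of $(\varphi,\Gamma)$-modules) uses a fundamentally different strategy, constructing lifts globally on the stack rather than by lifting extension classes one at a time. As written, your argument is a program, not a proof.
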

  \begin{aremark}\label{rem: cases know of existence of local
      lifts}{\em If
    $n=2$ then Conjecture \ref{conj: existence of local crystalline
      lifts} is easily verified by a Galois cohomology calculation
    (for example, any potentially Barsotti--Tate lift is potentially
    diagonalizable, and the result is then immediate in the
    irreducible case, and in the reducible case is a special case of
    Lemma 6.1.6 of \cite{blggord}). We anticipate that a similar
    argument works more generally.}\end{aremark}

\begin{alemma}\label{lem: local char 0 realisation over CM, no modularity yet}
  Let $p\nmid 2n$ be prime.  Let $K/\Qp$ be a finite extension, and
  let $\rbar:G_K\to\GL_n(\Fpbar)$ be a continuous
  representation. Assume that Conjecture~{\em \ref{conj: existence of local
    crystalline lifts}} holds for $\rbar$. Let
  $\rhobar:G_{L^+}\to\cG_n(\Fpbar)$ be the representation provided by
  Lemma~{\em \ref{prop: local realisation without modularity}}. Then
  there is a lift $\rho:G_{L^+}\to\cG_n(\Qpbar)$ of $\rhobar$ such that
  \begin{itemize}
  \item
$\nu\circ \rho=\varepsilon^{1-n}\delta_{L/L^+}^n$,  \item $\rho$ is unramified
outside of places dividing $p$, and
\item if $w|p$ is a place of $L$, then $\rho|_{G_{F_w}}$ is potentially
  diagonalizable with regular Hodge--Tate weights.
  \end{itemize}
\end{alemma}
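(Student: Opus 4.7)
The plan is to produce $\rho$ via a Ramakrishna-style lifting argument: we prescribe local lifts at each place of $L^+$ in a finite bad set $S$ (namely the places above $p$, together with those finitely many auxiliary Taylor--Wiles--Ramakrishna primes introduced during the argument) and then glue into a global lift using the adequacy of $\rhobar(G_{L(\zeta_p)})$ together with the standard Selmer-group-killing machinery.

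For the local lifts at places above $p$: by hypothesis, Conjecture~\ref{conj: existence of local crystalline lifts} gives a potentially diagonalizable lift $r:G_K\to\GL_n(\Qpbar)$ of $\rbar$ with regular Hodge--Tate weights. Each place $v|p$ of $L^+$ splits in $L$ as $v=\tv\tv^c$ with $L_\tv\cong K$ and $\rhobar|_{G_{L_\tv}}\cong\rbar$; we take $r$ as the local $\GL_n$-valued lift at $\tv$, which together with the prescribed multiplier $\nu\circ\rho=\varepsilon^{1-n}\delta_{L/L^+}^n$ and the $\cG_n$-structure (via $j\cdot(g,\mu)\cdot j^{-1}=(\mu\cdot{}^tg^{-1},\mu)$) determines the required local $\cG_n$-valued lift at $v$. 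At finite places $v\nmid p$ the residual representation $\rhobar|_{G_{L^+_v}}$ is unramified by Proposition~\ref{prop: local realisation without modularity}, so we simply take an unramified lift with the specified multiplier.

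To glue these local lifts into a global one, we invoke a Ramakrishna-style lifting theorem for $\cG_n$-valued deformations, as developed in Section~4 of~\cite{BLGGT} (using, for example, Proposition~1.5.1 of~\emph{ibid.} and the subsequent arguments). The needed adequacy of $\rhobar(G_{L(\zeta_p)})=\GL_n(\F_{p^m})$ is supplied by the combination of Proposition~\ref{prop: local realisation without modularity} and Lemma~\ref{lem: adequacy for p^n sufficiently large}, while the big image hypothesis that $\overline{L}^{\ker\rhobar}$ does not contain $L(\zeta_p)$ allows the introduction of auxiliary primes at which one imposes Ramakrishna-type local deformation conditions, so as to annihilate the dual Selmer group.

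The main obstacle is the verification that the chosen local lifting conditions---especially at the places above $p$---cut out formally smooth quotients of the local deformation rings of the expected dimension, so that the Greenberg--Wiles / Poitou--Tate Euler characteristic count can be balanced by the auxiliary primes. This is precisely what potential diagonalizability of $r$ provides: it guarantees that each chosen local lift sits on a formally smooth point of the appropriate local lifting ring of the expected dimension. Granting this, the standard machinery of~\cite{BLGGT} produces a global $\cG_n$-valued lift $\rho:G_{L^+}\to\cG_n(\Qpbar)$ of $\rhobar$ with multiplier $\varepsilon^{1-n}\delta_{L/L^+}^n$, unramified outside $p$, and whose restriction to each place of $L$ above $p$ is a conjugate of $r$, hence potentially diagonalizable with regular Hodge--Tate weights.
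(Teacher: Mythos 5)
Your overall plan is correct and matches the paper's, which simply invokes Theorem~A.4.1 of~\cite{blggU2} (together with Lemma~\ref{lem: adequacy for p^n sufficiently large} and Conjecture~\ref{conj: existence of local crystalline lifts}) in a single sentence; what you have written is an unpacking of that black box, which is indeed a Ramakrishna-style Galois lifting argument of exactly the kind you describe.

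Two quibbles. First, your citation is wrong: the lifting theorem being used is Theorem~A.4.1 of~\cite{blggU2}, not ``Section~4 of~\cite{BLGGT}'' and Proposition~1.5.1 of~\cite{BLGGT}. Proposition~1.5.1 of~\cite{BLGGT} is a different sort of statement (about $\Spec R^{\univ}$ hitting every component of $\Spec R^{\loc}$) and is invoked elsewhere in the present paper (in the proof of Lemma~\ref{key patching lemma}), not here. Second, you attribute to potential diagonalizability the role of ``guaranteeing that each chosen local lift sits on a formally smooth point of the appropriate local lifting ring of the expected dimension.'' That is not quite right: every $\Qpbar$-point of a potentially crystalline lifting ring is formally smooth of the expected dimension by Kisin's theorem (Proposition~\ref{prop: existence of local deformation rings}), independent of potential diagonalizability. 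What potential diagonalizability does here is simply ensure that the chosen local lift $r$, and hence the restriction of the output $\rho$ at places above $p$, is potentially diagonalizable, which is what the lemma requires and what will be needed for the subsequent potential automorphy argument. Neither of these affects the correctness of the overall argument, but you should be aware that the Euler-characteristic balance is not where potential diagonalizability enters.
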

\begin{proof} This follows at once from Theorem A.4.1
  of~\cite{blggU2}, Lemma~\ref{lem: adequacy for p^n sufficiently large}
  and Conjecture~\ref{conj: existence of local crystalline lifts}.
  \end{proof}
At this point, we would like to apply Theorem 4.5.1 of \cite{BLGGT} to
establish that for some CM extension $F/L$, $\rho|_{G_F}$ is (in the
terminology of \cite{BLGGT}) automorphic. However, for our
applications we need the places of $L$ above $p$ to split completely
in $F$, which is not guaranteed by Theorem 4.5.1 of
\cite{BLGGT}. Accordingly, we need to give a slight modification of
the proof of \emph{loc.\ cit}.\ 
\begin{aprop}\label{prop: potential automorphy keeping p split}
  Maintain the notation and assumptions of Lemma~{\em \ref{lem: local char
    0 realisation over CM, no modularity yet}}. Then there is a CM
  extension $F/L$ which is linearly disjoint from
  $L^{\ker\rhobar}(\zeta_p)$ over $L$, such that each place of $L$ lying over $p$ splits
  completely in $F$, and $\rho|_{G_F}$ is automorphic in the terminology
  of \cite{BLGGT}.
\end{aprop}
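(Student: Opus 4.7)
The plan is to adapt the proof of Theorem~4.5.1 of~\cite{BLGGT}, the main potential automorphy theorem, so as to control the splitting behaviour at places dividing $p$. The essential new ingredient available to us is that, by Lemma~\ref{lem: local char 0 realisation over CM, no modularity yet}, $\rho|_{G_{L_w}}$ is potentially diagonalizable with regular Hodge--Tate weights at every place $w \mid p$ of $L$. I would first select an auxiliary ordinary automorphic representation $\pi$ on a suitable rank-$n$ unitary group, with associated Galois representation $r$ such that $\bar r$ is linked to $\bar\rho$ in the sense needed for the BLGGT framework (same residual image on a large common subgroup, compatible similitude characters, etc.). The whole argument then reduces to a Moret--Bailly-style statement about rational points on a moduli space $X$ whose points over an extension $F/L$ encode simultaneous lifts of $\bar\rho$ and $\bar r$ inside a common $n$-dimensional compatible family; automorphy of $\rho|_{G_F}$ then follows from the automorphy of $r|_{G_F}$ and Taylor's trick of moving along the component via potentially diagonalizable endpoints.

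The restriction of scalars trick enters in the construction of $\pi$ and in the setup of the local conditions. Rather than working with $L$ directly, one realises $r$ as coming from automorphic forms on a unitary group over an auxiliary CM field structured so that its places above $p$ match those of $L$ one-to-one, with identical local fields; this uses the fact that $\rho|_{G_{L_w}}$ is potentially diagonalizable with regular weights at each $w \mid p$, together with a well-chosen CM form whose Hodge--Tate weights and inertial type at each $p$-adic place of the auxiliary field are arranged to match $\rho|_{G_{L_w}}$. Because $\rho|_{G_{L_w}}$ and $r|_{G_{L_w}}$ are then both potentially diagonalizable with identical Hodge--Tate weights, the two endpoints lie on a common irreducible component of the appropriate local lifting ring; this is precisely the condition that guarantees $X(L_w) \neq \emptyset$ at every $w \mid p$. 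Consequently, in invoking Moret--Bailly we are free to prescribe the local extension at $w \mid p$ to be trivial, i.e.\ to demand complete splitting at $p$-adic places.

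Finally, linear disjointness of $F$ from $L^{\ker\rhobar}(\zeta_p)$ over $L$ is arranged by imposing additional local conditions in the Moret--Bailly input at a finite auxiliary set of places of $L$, chosen so that their Frobenii separate the finitely many non-trivial intermediate Galois simple quotients of $\Gal(L^{\ker\rhobar}(\zeta_p)/L)$, exactly as in the end of the proof of Proposition~\ref{prop: local realisation without modularity} (compare Proposition~6.2 of~\cite{blght}). The hardest step, where the restriction of scalars trick is really essential, will be the construction of the auxiliary $\pi$ whose associated $r$ matches $\rho$ place-by-place above $p$: this requires a version of the construction of automorphic lifts of prescribed local type at all $p$-adic places simultaneously, and it is precisely to make this step possible that Conjecture~\ref{conj: existence of local crystalline lifts} (and hence the potentially diagonalizable input of Lemma~\ref{lem: local char 0 realisation over CM, no modularity yet}) was assumed.
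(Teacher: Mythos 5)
Your proposal has the right high\mbox{-}level ingredients (pass to a Moret--Bailly problem on a moduli space, choose ordinary points at $p$ so that the resulting compatible system is automorphic, then move from the ordinary lift to the potentially diagonalizable $\rho$ via an automorphy lifting theorem), but it misplaces the key restriction\mbox{-}of\mbox{-}scalars step and gives an incorrect mechanism for why the places above $p$ can be made to split completely.

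In the paper's argument, the restriction of scalars is applied to the \emph{moduli variety}, not to the construction of an auxiliary automorphic representation $\pi$ with matching local data. One first passes from $\rho$ to the $\GSp_{2n}$-valued representation $R := I\bigl(\rho \otimes(\psi,\varepsilon^{-n}\delta^{n+1}_{L/L^+})\bigr)$ (this $\GSp_{2n}$ reduction is essential to the BLGGT Theorem~3.1.2 setup and is absent from your sketch), and then constructs the BLGGT scheme $\widetilde T$ over $L^+(\zeta_N)^+$. The obstruction to applying Moret--Bailly directly is that $\widetilde T$ lives over $L^+(\zeta_N)^+$ (and in fact one needs to pass to a further solvable extension $(L')^+(\zeta_N)^+$ to have $p$-adic points satisfying the ordinarity condition $v(t_1(P_v))<0$); the places above $p$ of $L^+$ need not split in that base field, so Moret--Bailly over that base cannot control the splitting of $p$-adic places of $L^+$. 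This is precisely what $T' := \Res_{(L')^+(\zeta_N)^+/L^+}\widetilde T'$ fixes: it produces a geometrically irreducible variety over $L^+$ with $T'(L^+_v) = \prod_{w|v}\widetilde T'\bigl((L')^+(\zeta_N)^+_w\bigr) \neq \emptyset$ at each $v|p$, so that Moret--Bailly over $L^+$ (Proposition~3.1.1 of \cite{BLGGT}) furnishes a totally real $F^+/L^+$ in which all places above $p$ split completely, while simultaneously imposing linear disjointness. Your claim that ``the two endpoints lie on a common irreducible component of the appropriate local lifting ring'' is ``precisely the condition that guarantees $X(L_w)\neq\emptyset$'' conflates two distinct steps: the local condition needed for Moret--Bailly at $p$ is an \emph{ordinarity} (valuation) condition on points of the Dwork/moduli family, whereas potential diagonalizability (and the ``connecting components'' picture) is what is used \emph{afterwards} in the automorphy lifting step (Theorem~4.2.1 of \cite{BLGGT}) to go from the auxiliary ordinary lift $R'$ back to $R$ and hence $\rho$. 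As written, your argument does not explain how to obtain an $F$ over $L$ split at $p$; filling that gap is exactly what the restriction of scalars of $\widetilde T'$ accomplishes.
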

\begin{proof}
  The proof is a straightforward modification of the arguments of~\cite{BLGGT},
specifically Theorem 3.1.2
  and Proposition 3.3.1 of~\emph{op.\ cit}. We sketch the details. We
  will freely use the notation and terminology of \cite{BLGGT}. 
As in the proof of Proposition 3.3.1 of \cite{BLGGT}, we may choose a
character $\psi:G_{L}\to\Qpbartimes$ such that\begin{itemize}
\item $\psi$ is crystalline at all places above $p$.
\item
  $R:=I(\rho|_{G_{L^+}}\otimes(\psi,\varepsilon^{-n}\delta_{L/L^+}^{n+1})):G_{L^+}\to\GSp_{2n}(\Qpbar)$
  has multiplier $\varepsilon^{1-2n}$, and is crystalline with distinct
  Hodge--Tate weights at all places lying over~$p$.
\item $\Rbar(G_{L^+(\zeta_p)})$ is adequate. (This follows from the
  choice of $\F=\F_{p^m}$, which was chosen large enough that the
  conclusion of Lemma~\ref{lem: adequacy for p^n sufficiently large} holds.)
\end{itemize} 
We now employ a slight variant of the proof of Theorem 3.1.2 of
\cite{BLGGT}. We let the set $\cI$ of \emph{loc.\ cit}.\ consist of
just the single element $\{1\}$, we set $n_1=2n$, and we put $l_1=p$,
$r_1:=\Rbar$, $F=F_0=L^+$, $\Favoid=L(\zeta_p)$. We then apply the
constructions of \emph{loc.\ cit}., choosing in particular an
auxiliary positive integer $N$, and constructing a geometrically
irreducible scheme $\tT=T_{\rbar_1\times\rbar'_1}$ over $\Spec
L^+(\zeta_N)^+$. 

Choose a solvable extension $(L')^+/L^+$ of
totally real fields such that
  \begin{itemize}
  \item $(L')^+$ is linearly disjoint from
    $\overline{L}^{\ker\rhobar}(\zeta_p)$ over $L^+$, and
  \item for each place $v|p$ of $(L')^+(\zeta_N)^+$, there is a point
    $P_v\in\tT((L')^+(\zeta_N)^+_v)$ with $v(t_1(P_v))<0$.
  \end{itemize}

  We now construct a geometrically irreducible scheme $\tT'$ over
  $\Spec (L')^+(\zeta_N)^+$ in exactly the same way as $\tT$ is
  constructed over $\Spec L^+(\zeta_N)^+$, and we then set
  $T':=\Res_{(L')^+(\zeta_N)^+/L^+}\tT'$, a geometrically irreducible
  scheme over $\Spec L^+$.

We can then apply
  Proposition 3.1.1 of \cite{BLGGT}, to find a finite Galois extension
  $F^+/L^+$ of totally real fields in which the places of $L^+$ over $p$
  split completely, and a point of $P\in T'(F^+)$ which satisfies the  assumptions
  (relating to linear disjointness from a certain field, and on the
   $v(T_i(P))$) on the
  field $F'$ in the proof of Theorem 3.1.2 of \cite{BLGGT}. (We can
  assume that the places of $L^+$ over $p$ split completely in $F^+$
  by the assumption on the places of $(L')^+(\zeta_N)^+$ lying over
  $p$.)

Regarding $P$ as an $F^+(L')^+(\zeta_N)^+$-point of $\tT'$, it then
  follows exactly as in \emph{loc.\ cit}.\  that there is a continuous lift
  $R':G_{F^+(L')^+(\zeta_N)^+}\to\GSp_{2n}(\Qpbar)$ of the restriction
$\rhobar|_{G_{F^+(L')^+(\zeta_N)^+}}$ such that
  \begin{itemize}
  \item $R'$ is ordinary, and
  \item $R'$ is automorphic.
  \end{itemize}
Let $M$ be a quadratic totally imaginary extension of $F^+(L')^+(\zeta_N)^+$ such
that
\begin{itemize}
\item all places of $F^+(L')^+(\zeta_N)^+$ above $p$ split in $M$, and
\item $M$ is linearly disjoint from
  $\overline{L}^{\ker\rhobar}F^+(L')^+(\zeta_p)$ over  $F^+(L')^+$.
\end{itemize}
It follows from Theorem 4.2.1 of \cite{BLGGT} that $R|_{G_{M}}$
is automorphic, and then from Lemma 2.1.1 of \cite{BLGGT} that
$\rho|_{G_{F^+(L')^+(\zeta_N)^+L}}$ is automorphic. Since the extension $F^+(L')^+(\zeta_N)^+L/F^+L$ is
solvable, it follows from Lemma 1.4 of \cite{blght} that
$\rho|_{G_{F^+L}}$ is automorphic. Since the places of $L^+$ over $p$
  split completely in $F^+$, the claim follows upon taking $F=F^+L$.
\end{proof}
\begin{acor}
  \label{cor: the final local-to-global result} Suppose that $p\nmid 2n$, that $K/\Qp$ is a finite extension, and let
  $\rbar:G_K\to\GL_n(\Fpbar)$ be a continuous representation for which
  Conjecture {\em \ref{conj: existence of local crystalline lifts}} holds. Then
  there is an imaginary CM field $F$ and a continuous irreducible
  representation $\rhobar:G_{F^+}\to\cG_n(\Fpbar)$ such that
  \begin{itemize}
  \item each place $v|p$ of $F^+$ splits in $F$, and has $F^+_v\cong K$,
  \item for each place $v|p$ of $F^+$, there is a place $\tv$ of $F$
    lying over $v$ with $\rhobar|_{G_{F_\tv}}$ isomorphic to $\rbar$,
  \item $\rhobar$ is unramified outside of $p$,
   \item $\rhobar^{-1}(\GL_n(\Fpbar)\times\GL_1(\Fpbar))=G_F$,
    \item $\rhobar(G_{F(\zeta_p)})$ is adequate,
  \item $\overline{F}^{\ker\rhobar}$ does not contain $F(\zeta_p)$,
  \item $\rhobar$ is automorphic in the sense of
    Definition~{\em \ref{defn: max ideal or mod p Galois rep is automorphic}},
    and in particular  $\mu\circ\rhobar=\varepsilonbar^{1-n}\delta^n_{F/F^+}$.
  \end{itemize}
\end{acor}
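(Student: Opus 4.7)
The plan is to combine Proposition~\ref{prop: local realisation without modularity}, Lemma~\ref{lem: local char 0 realisation over CM, no modularity yet}, and Proposition~\ref{prop: potential automorphy keeping p split} with a base-change and descent argument that converts the $\GL_n$-automorphy of a carefully chosen lift over a large CM field into automorphy of the residual representation on the definite unitary group $G/F^+$, in the precise sense of Definition~\ref{defn: max ideal or mod p Galois rep is automorphic}.

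First I would apply Proposition~\ref{prop: local realisation without modularity} to produce a CM field $L/L^+$ together with a representation $\bar\rho_0 \colon G_{L^+} \to \cG_n(\Fpbar)$ satisfying its listed conclusions; in particular $L^+_v \cong K$ and $\bar\rho_0|_{G_{L_{\tilde v}}} \cong \bar r$ at each $v|p$, the image $\bar\rho_0(G_{L(\zeta_p)})$ is adequate, and $\bar L^{\ker\bar\rho_0}$ does not contain $L(\zeta_p)$. Using the assumed validity of Conjecture~\ref{conj: existence of local crystalline lifts} for $\bar r$, Lemma~\ref{lem: local char 0 realisation over CM, no modularity yet} then provides a lift $\rho_0 \colon G_{L^+} \to \cG_n(\Qpbar)$ which is unramified outside $p$, has multiplier $\varepsilon^{1-n}\delta_{L/L^+}^n$, and is potentially diagonalizable with regular Hodge--Tate weights at each place above $p$.

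Next, Proposition~\ref{prop: potential automorphy keeping p split} produces a CM extension $F/L$ that is linearly disjoint from $L^{\ker\bar\rho_0}(\zeta_p)$ over $L$, in which every place of $L$ above $p$ splits completely, and such that $\rho_0|_{G_F}$ is automorphic in the sense of~\cite{BLGGT}. I would then take $F^+$ to be the maximal totally real subfield of $F$ and set $\bar\rho := \bar\rho_0|_{G_{F^+}}$. The linear disjointness condition guarantees that $\bar\rho(G_{F(\zeta_p)}) = \bar\rho_0(G_{L(\zeta_p)})$, so adequacy is inherited and $\bar F^{\ker\bar\rho}$ still avoids $F(\zeta_p)$; complete splitting of the $p$-adic places from $L^+$ up to $F$ yields the local condition $F^+_v \cong K$ with $\bar\rho|_{G_{F_{\tilde v}}} \cong \bar r$; and unramifiedness outside $p$ together with the multiplier formula are inherited from $\bar\rho_0$. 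If needed, an auxiliary totally real solvable extension (chosen to split completely at all places of $F^+$ above $p$ and to be linearly disjoint from $\bar F^{\ker\bar\rho}(\zeta_p)$ over $F^+$) preserves all of the above, and can additionally be arranged to make $[F^+:\Q]$ divisible by~$4$ and $F/F^+$ everywhere unramified at finite places, so as to match the full list of conditions required for a suitable globalization in Section~\ref{subsec:basics}.

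The principal step, and the main technical obstacle, is to upgrade the BLGGT-style automorphy of $\rho_0|_{G_F}$ to automorphy of $\bar\rho$ in the sense of Definition~\ref{defn: max ideal or mod p Galois rep is automorphic}. By hypothesis there is a regular algebraic, essentially conjugate self-dual, cuspidal automorphic representation $\Pi$ of $\GL_n(\A_F)$ whose associated $p$-adic Galois representation is $\rho_0|_{G_F}$; since $\rho_0$ takes values in $\cG_n$ with the prescribed multiplier, $\Pi$ is of the type required by Labesse's cyclic descent (Corollaire~5.3 of~\cite{labesse}, applied exactly as in the proof of Proposition~\ref{existence of repns}). This descent produces a cohomological automorphic representation of the definite unitary group $G/F^+$, which contributes a nonzero Hecke eigenclass to $S_{\lambda,\tau}(U,\cO)$ for suitable $\lambda$, $\tau$ and a sufficiently small level $U$ that is hyperspecial at each place above $p$. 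Reducing the resulting Hecke eigenvalues modulo the uniformiser of $\cO$ yields an automorphic maximal ideal $\m \subset \T^{T,\univ}$ whose attached characteristic polynomials of Frobenius match those of $\bar\rho|_{G_F}(\Frob_w)$ for all $w$ above an unramified split place of $F^+$, as demanded by Definition~\ref{defn: max ideal or mod p Galois rep is automorphic}. The delicate bookkeeping lies in arranging the chain of solvable base changes so that adequacy, linear disjointness, the local-at-$p$ structure, and unramifiedness outside $p$ are simultaneously preserved; this is standard but requires care.
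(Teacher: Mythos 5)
Your proposal is correct and reconstructs the same chain of reasoning that the paper compresses into one sentence: Proposition~\ref{prop: local realisation without modularity} and Lemma~\ref{lem: local char 0 realisation over CM, no modularity yet} produce the residual representation and a potentially diagonalizable lift, Proposition~\ref{prop: potential automorphy keeping p split} gives automorphy of that lift over a CM field in which the $p$-adic places split completely and linear disjointness preserves all the image and local conditions, and the Labesse-type descent you invoke is exactly the role played by Proposition~2.2.7 of~\cite{ger}, which the paper cites, to pass from $\GL_n$-automorphy to a nonzero eigenclass in $S_{\lambda,\tau}(U,\cO)$ and hence to automorphy in the sense of Definition~\ref{defn: max ideal or mod p Galois rep is automorphic}.
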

\begin{proof}
  This follows from Proposition \ref{prop: potential automorphy
    keeping p split} and the theory of base change between $\GL_n$ and
  unitary groups, cf. Proposition 2.2.7 of \cite{ger}.
\end{proof}
\bibliographystyle{amsalpha}
\bibliography{bibforrefinedbm}
\end{document}